\documentclass[11pt]{article}
\usepackage[T1]{fontenc}
\usepackage[utf8]{inputenc}
\usepackage{amsmath,amssymb,amsthm}
\usepackage[a4paper,margin=1.1in]{geometry}

\theoremstyle{plain}
\newtheorem{theorem}{Theorem}[section]
\newtheorem{lemma}[theorem]{Lemma}
\newtheorem{proposition}[theorem]{Proposition}
\newtheorem{corollary}[theorem]{Corollary}
\newtheorem{fact}[theorem]{Fact}
\theoremstyle{definition}

\newtheorem{remark}[theorem]{Remark}

\newcommand{\mm}{\mathbb{M}}
\newcommand{\HFM}{\mathrm{HFM}}
\newcommand{\enc}[1]{\langle #1\rangle}   % singleton / enclosure
\newcommand{\U}{\uplus}                   % multiset union / juxtaposition
\newcommand{\sq}{\sqsubseteq}             % containment
\newcommand{\Fm}{\mathsf{F}^{-}}
\newcommand{\WFm}{\mathsf{WF}^{-}}
\newcommand{\Qr}{\mathsf{Q}}
\newcommand{\Rr}{\mathsf{R}}
\newcommand{\TT}{\mathsf{T}}
\newcommand{\WT}{\mathsf{WT}}
\newcommand{\AST}{\mathsf{AST}}
\newcommand{\TC}{\mathsf{TC}}
\newcommand{\QTp}{\mathsf{QT}^{+}}
\newcommand{\intp}{\trianglelefteq}       % "is interpretable in"
\newcommand{\mutint}{\equiv_{\mathrm{I}}} % mutual interpretability
\newcommand{\pr}[2]{\pi(#1,#2)}
\newcommand{\nf}{\mathrm{nf}}
\newcommand{\hull}{\mathrm{hull}}

\title{Weak essentially undecidable theories of\\ hereditarily finite multisets}
\author{Platon Sifnaios\\[2pt]
\small Independent Researcher, Athens, Greece\\[1pt]
\small\texttt{p.sifnaios@gmail.com}}
\date{}

\begin{document}
\maketitle

\begin{abstract}
We introduce two first-order theories of hereditarily finite multisets: a schematic
theory $\WFm$ and a finitely axiomatized theory $\Fm$, in the language with the empty
multiset, singleton formation, multiset union, and a containment relation. We prove
that $\WFm$ is mutually interpretable with Robinson's theory $\Rr$, and $\Fm$ with
Robinson arithmetic $\Qr$; in particular, $\Fm$ is essentially undecidable. Multisets
thereby join numbers, strings, trees, sets, and sequences in the mutual-interpretability
classes of $\Rr$ and $\Qr$. The distinctive obstacle of the multiset case is the
simultaneous failure of the standard devices for recovering ordered pairs: positional
order, local order on immediate constituents, and idempotence-based Kuratowski pairing.
We show that order is recoverable from bare multiplicity: the term
$\pr{x}{y}=\enc{x}\U\enc{x}\U\enc{y}$ is provably injective in $\Fm$, yielding a direct
interpretation of the Kristiansen--Murwanashyaka tree theory $\TT$; conversely, $\Fm$
is interpreted in $\Qr$ by arithmetizing a normal-form calculus for multiset terms
within bounded arithmetic. Each structural axiom of $\Fm$ is shown independent of the
others, with finite or Presburger-definable decidable witnesses, and the containment
axiom is conservative. As an application, we identify Spencer-Brown's forms modulo
commutative juxtaposition with hereditarily finite multisets and locate the boundary
of essential undecidability within the calculus of indications.
\end{abstract}

\noindent\textbf{Keywords:} interpretability; essential undecidability; hereditarily
finite multisets; Robinson arithmetic; adjunctive set theory; tree theories; calculus
of indications.\\
\textbf{MSC 2020:} 03F25, 03F30, 03B25, 03D35.

\section{Introduction}\label{sec:intro}
The monograph of Tarski, Mostowski, and Robinson \cite{TMR53} isolated two weak
arithmetical theories, $\Rr$ and $\Qr$, as basis theories for metamathematical
arguments: both are essentially undecidable, and interpretability of either in a
consistent theory transfers essential undecidability to it. Interpretability is
reflexive and transitive, and thus organizes the recursively enumerable
essentially undecidable theories into degrees. A notable development of the past
two decades is the discovery that the natural weak theories of finitary
structures cluster into just two mutual-interpretability classes, those of $\Rr$
and of $\Qr$. The class of $\Qr$ contains, besides $\Qr$ itself, Grzegorczyk's
concatenation theory $\TC$ \cite{Grz05,GZ08}, shown mutually interpretable with
$\Qr$ through \cite{Gan09,Sve07,Vis09,Dam17}; adjunctive set theory $\AST$
\cite{TMR53,CH70,MM94,Dam17}; the theory $\TT$ of full binary trees of
Kristiansen and Murwanashyaka \cite{KM20}, shown interpretable in $\Qr$ by
Damnjanovic \cite{Dam22}, with $m$-ary generalizations in \cite{Dam23}; and the
sequence theory $\mathsf{Seq}$ \cite{KM24}. The class of $\Rr$ contains, besides
$\Rr$ itself \cite{JS83}, the concatenation theory $\mathsf{WTC}^{-\varepsilon}$
of Higuchi and Horihata \cite{HH14}, the tree theory $\WT$ \cite{KM20,Dam22},
the hybrid theory $\mathsf{WQT}^{*}$ \cite{Dam22}, and weak concatenation
variants \cite{Mur22,Mur24}. Numbers, strings, trees, sets, sequences: each
species of finitary structure treated in this programme is represented --- with
one exception.

Hereditarily finite multisets --- finite multisets whose elements are themselves
hereditarily finite multisets --- are absent from this picture, although they
occupy a distinguished position in the family: they are the free commutative
counterpart of finite ordered trees and the non-idempotent counterpart of
hereditarily finite sets. The present paper fills the gap. We introduce a
schematic theory $\WFm$ and a finitely axiomatized theory $\Fm$ in the language
$L=\{\emptyset,\enc{\cdot},\U,\sq\}$ --- empty multiset, singleton, multiset
union, containment\footnote{We call $\sq$ the \emph{containment} or
\emph{subform} relation. The term ``pervasion'' is avoided: in parts of the
boundary-algebra literature it names an equational deletion principle rather
than an occurrence relation.} --- both true in the intended model $\mm$ of
hereditarily finite multisets, and prove:
\begin{itemize}\setlength\itemsep{2pt}
\item[\textbf{A.}] $\WFm$ is mutually interpretable with $\Rr$
(Theorem~\ref{thm:A});
\item[\textbf{B.}] $\Fm$ is mutually interpretable with $\Qr$; in particular,
$\Fm$ is essentially undecidable (Theorem~\ref{thm:B});
\item[\textbf{C.}] each of the structural axioms $\mathrm{F}_4$--$\mathrm{F}_8$
of $\Fm$ is independent of the remaining axioms, its removal admitting a
consistent decidable complete extension witnessed by a finite or
Presburger-definable model (Theorem~\ref{thm:indep}), while the containment
axiom $\mathrm{F}_9$ is conservative over the rest
(Proposition~\ref{prop:cons}); minimality is understood relative to the
axiomatization, there being no interpretability-minimal recursively enumerable
essentially undecidable theory \cite{MPV24}.
\end{itemize}

Kristiansen and Murwanashyaka have suggested that the unexpected strength of
these weak theories stems from their ability to represent sequences --- that
access to sequences is, intuitively, both necessary and sufficient for
reconstructing substantial mathematics \cite{KM24}. Multisets furnish the
sharpest available test of the necessity half of that intuition, for they are
the order-free finitary structure par excellence. A multiset carries no
positional order, as strings and sequences do; no local order on immediate
constituents, as binary trees do; and, unlike sets, not even the idempotent
collapse that Kuratowski pairing exploits in adjunctive set theory. Every
standard device for recovering ordered pairs --- the currency in which
interpretations of $\Qr$ are paid --- fails simultaneously. Theorem~B shows
that order is nonetheless recoverable from bare multiplicity: the term
$\pr{x}{y}:=\enc{x}\U\enc{x}\U\enc{y}$, the multiset $[x,x,y]$, is provably
injective on the basis of finitely many universal and $\forall\exists$ axioms
(Pairing Lemma~\ref{lem:pairing}). Access to sequences is thus not a primitive
prerequisite for essential undecidability: it is reconstructible from
multiplicity and nesting alone.

The interpretations run as follows. For Theorem~B, the Pairing Lemma yields a
direct interpretation --- identical on equality, without relativization --- of
the tree theory $\TT$ in $\Fm$ (Theorem~\ref{thm:TinF}); composition with the
interpretation of $\Qr$ in $\TT$ from \cite{KM20} gives $\Qr\intp\Fm$. For the
converse, a normal-form calculus for multiset terms is arithmetized inside
Buss's $S^1_2$ \cite{Bus86}: canonical words over the bracket alphabet code
multisets, union is sorted merge of top-level blocks, containment is occurrence
as a balanced segment, and the nine axioms are verified with
$\Delta^b_1$-machinery and $\Sigma^b_1$-induction \cite{Bus86,HP93}; descent to
$\Qr$ is through the mutual interpretability of $S^1_2$ with $\Qr$
\cite{FF13,Dam22}. For Theorem~A, closed-term instances translate schema to
schema: the coding $c(\bot):=\emptyset$,
$c(\langle s,t\rangle):=\pr{c(s)}{c(t)}$ sends distinct trees to provably
distinct values (Lemma~\ref{lem:coding}), and the containment hull of a code
consists exactly of the codes of the subtrees (Lemma~\ref{lem:hullcode}); the
converse direction follows from Visser's characterization of interpretability
in $\Rr$ by local finite satisfiability \cite{Vis14}, witnessed by
weight-truncated models with an absorbing element (Lemma~\ref{lem:MB}) --- the
deliberate omission of cancellation from $\WFm$ is what keeps such finite
models available. The two theorems, with Corollary~\ref{cor:closing}, reproduce
over multisets the exact interpretability configuration of the pair
$(\WT,\TT)$ over trees.

The intended model has an independent pedigree. The forms of Spencer-Brown's
calculus of indications \cite{SB69}, generated from the empty expression by
enclosure and juxtaposition, are --- modulo the associativity, commutativity,
and neutrality that spatial juxtaposition satisfies --- precisely the
hereditarily finite multisets, with enclosure as singleton and juxtaposition as
union (Theorem~\ref{thm:ident1}). Imposing the idempotence law collapses them
onto hereditarily finite sets, where the composite $x\enc{y}$ is, verbatim, set
adjunction (Theorem~\ref{thm:ident2}), and the resulting condensed theory is
mutually interpretable with $\Qr$ through the known chain via adjunctive set
theory (Proposition~\ref{prop:condensed}). Finite-state re-entry, the feedback
mechanism of Chapter~11 of \cite{SB69} and of the waveform tradition
\cite{Var75,KV80}, generates only ultimately periodic behaviour and remains
within the decidable regime of B\"uchi's S1S (Proposition~\ref{prop:reentry};
\cite{Buc62,Elg61}). Within the calculus of indications, the boundary of
essential undecidability is thereby located exactly: it is crossed neither by
multiplicity alone (Presburger \cite{Pre29}, with \cite{FV59}), nor by
multiplication alone (Skolem \cite{Sko30}, Mostowski \cite{Mos52}), nor by
ordered nesting without a subterm relation \cite{Mal61,Mah88,Hod93}, nor by
finite-state re-entry --- but by unbounded containment of form within form
(Section~\ref{sec:calculus}).

All theories considered are theories of their intended models: every axiom of
$\WFm$ and $\Fm$ holds in $\mm$ (Proposition~\ref{prop:soundF}), and the
inclusion of the containment relation in the primitive vocabulary follows the
design of the tree theories $\WT$ and $\TT$, whose language likewise carries
the subterm relation interpreted over a term model \cite{KM20}. The external
inputs of the paper are confined to the results recorded in
Section~\ref{sec:prelim}, the classical decidability theorems cited where used,
the Knaster--Tarski fixpoint theorem \cite{Tar55}, and the standard
definability infrastructure of $S^1_2$ \cite{Bus86,HP93}; each is invoked in
the exact form cited.

Section~\ref{sec:prelim} fixes interpretability preliminaries and records the
external results. Section~\ref{sec:theories} introduces $\WFm$ and $\Fm$,
establishes the normal-form calculus, and shows that $\Fm$ extends $\WFm$
(Proposition~\ref{prop:FextendsWF}). Section~\ref{sec:pairing} proves the
Pairing Lemma and constructs the direct interpretation of $\TT$.
Sections~\ref{sec:thmA} and~\ref{sec:thmB} prove Theorems A and~B,
Section~\ref{sec:minimality} proves Theorem~C, and Section~\ref{sec:calculus}
contains the identification theorems and the boundary map for the calculus of
indications.

\section{Preliminaries}\label{sec:prelim}

\subsection{Theories and interpretability}
Theories are first-order theories with equality, identified with their sets of
non-logical axioms; the language of a theory is the language of its axioms. A theory
is \emph{recursively axiomatized} if its set of axioms is decidable. A consistent
theory $U$ is \emph{essentially undecidable} if no consistent extension of $U$ in the
same language is decidable \cite{TMR53}.

A \emph{(relative) interpretation} of a theory $V$ in a theory $U$ is given by a
translation $\tau$ of the language of $V$ into the language of $U$: a formula
$\delta(x)$ (the \emph{domain}), for which $U$ proves $\exists x\,\delta(x)$; for each
$n$-ary relation symbol of $V$ a formula in $n$ free variables; and for each $n$-ary
function symbol a formula in $n{+}1$ free variables which $U$ proves to be total and
functional on $\delta$. Translations commute with connectives, and quantifiers are
relativized to $\delta$. We write $V\intp U$ (\emph{$V$ is interpretable in $U$}) if
there is a translation $\tau$ such that $U\vdash\varphi^{\tau}$ for every axiom
$\varphi$ of $V$. An interpretation is \emph{direct} if $\delta(x):=(x{=}x)$, equality
is translated as equality, and function symbols are translated by terms; direct
interpretations compose with arbitrary ones. We write $U\mutint V$ for mutual
interpretability. Interpretability is reflexive and transitive.

\begin{fact}[\cite{TMR53}]\label{fact:transfer}
If $V$ is essentially undecidable and $V\intp U$ for a consistent theory $U$, then
$U$ is essentially undecidable.
\end{fact}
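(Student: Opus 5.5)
The plan is to argue by contraposition, reducing the decision problem for a consistent extension of $V$ to that of a consistent extension of $U$ by means of the translation. Let $\tau$ witness $V\intp U$. Suppose $U$ is not essentially undecidable. Then $U$ has a consistent decidable extension $U'$ in the language of $U$; note that $U$ itself is consistent by hypothesis, so only the decidability clause is at issue. From $U'$ I will build
\[
V' := \{\varphi \text{ a sentence of the language of } V : U'\vdash\varphi^{\tau}\}.
\]
The aim is to show that $V'$ is a consistent, decidable extension of $V$, which contradicts the essential undecidability of $V$.

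The verification has three parts.

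First, $V'$ extends $V$. For every axiom $\varphi$ of $V$ we have $U\vdash\varphi^\tau$, and hence $U'\vdash\varphi^\tau$ because $U'$ extends $U$.

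Second, $V'$ is deductively closed and consistent. This rests on the standard soundness property of translations: if $\varphi_1,\dots,\varphi_n\vdash\psi$ in pure logic, then
\[
U\vdash (\varphi_1^\tau\wedge\dots\wedge\varphi_n^\tau)\to\psi^\tau .
\]
This property is where the requirements on a relative interpretation enter. The premise $U\vdash\exists x\,\delta(x)$ is needed for the quantifier rules, and the premise that the formulas translating function symbols are provably total and functional on $\delta$ is needed to handle terms. The usual route is to pass first to a relational language by eliminating function symbols from $V$, and then check each rule of inference. Given this property, if $V'$ proved $\bot$ then $U'\vdash\bot^\tau=\bot$, which is impossible since $U'$ is consistent.

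Third, $V'$ is decidable. The map $\varphi\mapsto\varphi^\tau$ is computable, and membership of $\varphi$ in $V'$ reduces to asking whether $U'\vdash\varphi^\tau$. Here I read ``decidable'' for a theory as decidability of its set of theorems, which is the standard reading in \cite{TMR53}. Under that reading, $U'$ decidable means its theorem set is decidable, and $V'$ is decidable by this reduction.

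The only step with real content is the second one, the soundness of translation with respect to derivability. I expect that to be the main obstacle. Since it is routine and classical, I will cite it from \cite{TMR53} rather than reprove it, mentioning only the handling of the domain and of function symbols noted above.
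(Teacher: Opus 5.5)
Your argument is correct. The paper gives no proof of its own and simply cites \cite{TMR53}, where the result is organized differently. There, interpretability is formulated through extensions by definitions of a relativization of $V$ to a new unary predicate, rather than through translations. The transfer is then assembled from separate lemmas: essential undecidability passes from $V$ to its relativization and to consistent extensions in richer languages, and an extension by definitions of a decidable theory is again decidable, by eliminating the defined symbols.

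Your pullback $V'=\{\varphi : U'\vdash\varphi^{\tau}\}$ does all of these steps at once along a single computable translation. That is the natural proof for the translation-based definition the paper adopts. The price is that everything rests on the soundness of $\tau$ for derivability, which you cite rather than verify.

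When you invoke that lemma, make explicit two provisos that the paper's definition leaves implicit. First, ``total on $\delta$'' must include that the values of the translated functions lie in $\delta$; the paper checks this closure for $\rho$ in Section~\ref{sec:thmB}. Second, if equality is translated by a formula rather than by identity, $U$ must also prove the translated equality axioms, since otherwise the substitution rules are not preserved. With these standard conventions, your three verifications (extension, consistency and deductive closure, decidability) are complete.
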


\subsection{The arithmetical base theories}
The language of arithmetic here is $\{0,S,+,\cdot,\le\}$; numerals are
$\bar n:=S^{n}0$. The axioms of $\Qr$ and of $\Rr$ are listed in
Figure~\ref{fig:RQ}; in $\Qr$, $x\le y$ abbreviates $\exists z\,(z+x=y)$. Both
theories are essentially undecidable \cite{TMR53}; $\Rr$ is not finitely
axiomatizable, and the schematic presentation of $\Rr$ below follows
\cite{JS83}.

\begin{figure}[t]
\centering
\fbox{\begin{minipage}{0.44\textwidth}
\textbf{Axioms of $\Qr$}
\begin{itemize}\setlength\itemsep{1pt}
\item[$\Qr_1$:] $Sx\neq 0$
\item[$\Qr_2$:] $Sx=Sy\rightarrow x=y$
\item[$\Qr_3$:] $x\neq 0\rightarrow\exists y\,(x=Sy)$
\item[$\Qr_4$:] $x+0=x$
\item[$\Qr_5$:] $x+Sy=S(x+y)$
\item[$\Qr_6$:] $x\cdot 0=0$
\item[$\Qr_7$:] $x\cdot Sy=x\cdot y+x$
\end{itemize}
\end{minipage}}
\hfill
\fbox{\begin{minipage}{0.50\textwidth}
\textbf{Axiom schemes of $\Rr$} (all $n,m\in\mathbb{N}$)
\begin{itemize}\setlength\itemsep{1pt}
\item[$\Rr_1$:] $\bar n+\bar m=\overline{n+m}$
\item[$\Rr_2$:] $\bar n\cdot\bar m=\overline{n\cdot m}$
\item[$\Rr_3$:] $\bar n\neq\bar m$ \quad for $n\neq m$
\item[$\Rr_4$:] $\forall x\,(x\le\bar n\leftrightarrow x=\bar 0\vee\dots\vee x=\bar n)$
\item[$\Rr_5$:] $\forall x\,(x\le\bar n\vee\bar n\le x)$
\end{itemize}
\end{minipage}}
\caption{The theories $\Qr$ and $\Rr$.}\label{fig:RQ}
\end{figure}

\subsection{The reference theories of trees}
The tree theories of Kristiansen and Murwanashyaka \cite{KM20} are formulated in the
language $L_{\TT}=\{\bot,\langle\cdot,\cdot\rangle,\sq\}$ with a constant, a binary
function symbol, and a binary relation symbol. The intended model is the term model:
the universe is the set of variable-free $L_{\TT}$-terms built from $\bot$ and
pairing, and $\sq$ is interpreted as the subterm relation. The finitely axiomatized
theory $\TT$ and the schematic theory $\WT$ are given in Figure~\ref{fig:WTT};
in $(\mathrm{WT}_2)$, $S(t)$ denotes the set of all subterms of $t$. The theory
$\TT$ extends $\WT$ \cite{Dam22}.

\begin{figure}[t]
\centering
\fbox{\begin{minipage}{0.96\textwidth}
\textbf{Axioms of $\TT$}
\begin{itemize}\setlength\itemsep{1pt}
\item[$\TT_1$:] $\forall x,y\;\neg\,(\langle x,y\rangle=\bot)$
\item[$\TT_2$:] $\forall x,y,z,w\;(\langle x,y\rangle=\langle z,w\rangle
      \rightarrow x=z\wedge y=w)$
\item[$\TT_3$:] $\forall x\;(x\sq\bot\leftrightarrow x=\bot)$
\item[$\TT_4$:] $\forall x,y,z\;(x\sq\langle y,z\rangle\leftrightarrow
      x=\langle y,z\rangle\vee x\sq y\vee x\sq z)$
\end{itemize}
\medskip
\textbf{Axiom schemes of $\WT$} (over variable-free $L_{\TT}$-terms $s,t$)
\begin{itemize}\setlength\itemsep{1pt}
\item[$\mathrm{WT}_1$:] $\neg\,(s=t)$ \quad for distinct variable-free terms $s,t$
\item[$\mathrm{WT}_2$:] $\forall x\,\bigl(x\sq t\leftrightarrow
      \bigvee_{s\in S(t)}x=s\bigr)$ \quad for each variable-free term $t$
\end{itemize}
\end{minipage}}
\caption{The tree theories $\TT$ and $\WT$ of \cite{KM20}.}\label{fig:WTT}
\end{figure}

\begin{theorem}[{\cite[Thm.~2]{KM20}}]\label{thm:WTR}
$\WT\mutint\Rr$.
\end{theorem}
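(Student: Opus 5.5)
Theorem~\ref{thm:WTR} is quoted from \cite{KM20}, so within the paper it is an external input. Still, the proof plan has two directions, and I would argue each as follows.

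\emph{$\Rr\intp\WT$.} I would code the numeral $\bar n$ as the right-leaning spine $\bot,\langle\bot,\bot\rangle,\langle\bot,\langle\bot,\bot\rangle\rangle,\dots$. Successor is $x\mapsto\langle\bot,x\rangle$, and $x\le y$ is read as ``$x$ is a spine and $x\sq y$''.

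Addition and multiplication are not available as terms. I would therefore define them as relations: $x+y=z$ holds when $z$ is the right combination of subterms. To do this without induction, one translates $\Rr$ through a relational variant, in which totality and functionality are only required on numerals. The standard trick is to use Visser's or Jones--Shepherdson's observation that $\Rr$ is interpretable in any theory in which all instances of a finite set of local facts hold.

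$\mathrm{WT}_1$ gives $\Rr_3$. $\mathrm{WT}_2$, applied to the spine term for $\bar n$, gives $\Rr_4$. $\Rr_5$ needs care, because a nonstandard element need not contain every spine. I would fix this by relativizing the domain to elements that are either spines comparable to all others, or by defining $\le$ disjunctively, say $x\le y$ iff $x\sq y\vee y\not\sq x$, and then checking $\Rr_4$ again.

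\emph{$\WT\intp\Rr$.} I would use Visser's theorem \cite{Vis14}: a recursively axiomatized theory is interpretable in $\Rr$ iff it is locally finitely satisfiable, meaning every finite subtheory has a finite model. A finite subset of $\WT$ mentions only finitely many closed terms, all of height at most some $h$.

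Take as model the trees of height at most $h$ together with one absorbing element $\ast$. Pairing is the real pairing when the result has height at most $h$, and $\ast$ otherwise. The relation $\sq$ is the real subterm relation on trees, and I would stipulate $x\sq\ast$ for all $x$ and $\ast\not\sq t$ for every tree $t$. Then $\mathrm{WT}_1$ and $\mathrm{WT}_2$ hold for every term of height at most $h$. The model is finite, so Visser's theorem applies.

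The main obstacle is the first direction: getting $\Rr_5$ and the defined arithmetic operations to behave on nonstandard elements without any induction. The disjunctive definition of $\le$ is the step I would try first.
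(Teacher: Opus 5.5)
The paper gives no proof of this statement. It is imported from \cite[Thm.~2]{KM20}, and the paper only reuses the pattern of its second half in Lemma~\ref{lem:MB}. Your direction $\WT\intp\Rr$ is correct and follows exactly that pattern: Visser's criterion plus height-truncated term models with an absorbing element. There every closed term of the finite fragment evaluates without truncation, and the stipulation $\ast\not\sq t$ makes the $\mathrm{WT}_2$ instances hold at $\ast$.

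The direction $\Rr\intp\WT$ has a genuine gap.

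\emph{Addition and multiplication are never defined.} The ``standard trick'' you invoke, that $\Rr$ is interpretable in any theory satisfying ``a finite set of local facts'', is not an available result. Visser's theorem (Theorem~\ref{thm:visser}) characterizes theories interpretable \emph{in} $\Rr$ and gives nothing here. What is missing is formulas $A_{+}(x,y,z)$ and $A_{\times}(x,y,z)$ with $\WT\vdash\forall z\,(A_{+}(\bar n,\bar m,z)\leftrightarrow z=\overline{n+m})$, and likewise for products; a default value then yields provably total functions. This is precisely the hard part. The variable $z$, and any witness, may be nonstandard, and $\WT$ says nothing about nonstandard elements, not even that pairing is injective on variables.

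\emph{Neither fix for $\Rr_5$ works.} Your right spines are a good choice: $S(\bar n)=\{\bar 0,\dots,\bar n\}$, so $\sq$ already satisfies $\Rr_4$.
With $x\le y:\leftrightarrow x\sq y\vee y\not\sq x$, take the closed term $x=\langle\bar 1,\bot\rangle$. It satisfies $\bar 2\not\sq x$, so $\WT\vdash x\le\bar 2$. But $\mathrm{WT}_1$ refutes $x=\bar 0\vee x=\bar 1\vee x=\bar 2$, so $\Rr_4$ is refuted.
Relativizing the domain to elements comparable with everything fails too. Putting $\bar 2$ into the domain would require $\WT\vdash\forall y\,(y\sq\bar 2\vee\bar 2\sq y)$, which is false in the term model at the same $y=\langle\bar 1,\bot\rangle$. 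Your predicate ``$x$ is a spine'' is also never given as a formula.

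\emph{One way to close the gap.}
\begin{enumerate}
\item Put $G(x):\leftrightarrow\bot\sq x\wedge\forall y\,(y\sq x\wedge y\neq x\rightarrow\langle\bot,y\rangle\sq x)$ and $x\le y:\leftrightarrow x\sq y\vee(G(x)\wedge\neg G(y))$.
\item By $\mathrm{WT}_2$, $\WT\vdash G(\bar n)$, which gives $\Rr_4$.
\item An $n$-step metainduction gives $\WT\vdash G(x)\rightarrow x\sq\bar n\vee\bar n\sq x$, hence $\Rr_5$. The steps are: $\bot\sq x$; and if $\bar k\sq x$, then $x=\bar k$ or $\overline{k+1}\sq x$.
\item With $\Rr_4$ and $\Rr_5$ available, use the minimization trick of \cite{TMR53}: $\psi(x,y,z)\wedge\forall z'\le z\,(\psi(x,y,z')\rightarrow z'=z)$. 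It turns any formula $\psi$ that merely decides the graph of $+$ (resp.\ $\cdot$) on numerals into an $A_{+}$ (resp.\ $A_{\times}$) as above.
\item Such a $\psi$ can assert the existence of a record $w$ of suitably tagged stage pairs, with all other quantifiers bounded by $u\sq x$ and $v\sq z$. True instances are then witnessed by closed records, and false ones are refuted by inspecting only subterms of numerals.
\end{enumerate}
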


\begin{theorem}[{\cite[Thm.~11]{KM20}}]\label{thm:QT}
$\Qr\intp\TT$.
\end{theorem}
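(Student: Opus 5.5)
The statement is Theorem~\ref{thm:QT}, that $\Qr\intp\TT$, cited from \cite{KM20}. The plan is to build a direct arithmetic inside $\TT$, following the Kristiansen--Murwanashyaka approach.

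\textbf{Domain and successor.} Numbers are represented by ``right spines'' over $\bot$. Put $0:=\bot$ and $Sx:=\langle x,\bot\rangle$. Axioms $\Qr_1$ and $\Qr_2$ then follow at once from $\TT_1$ and $\TT_2$.

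$\Qr_3$ fails on the whole universe, since not every tree is a spine. I will therefore relativize to a domain $N(x)$ saying that every subterm of $x$ is either $\bot$ or of the form $\langle y,\bot\rangle$. This is expressible with $\sq$, and $\TT_3$ and $\TT_4$ let us show that $N$ contains $\bot$ and is closed under $S$ and under predecessor.

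\textbf{Addition and multiplication.} These cannot be terms. My first attempt would define them by the existence of a \emph{computation witness}: a tree coding a list of triples $\langle\langle a,b\rangle,c\rangle$, chained with $\bot$ as terminator, that records a recursion trace from $(x,0,x)$ up to $(x,y,z)$. Membership in the list is read off via $\sq$ and $\TT_4$.

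The main obstacle is this step. $\TT$ has no induction, so we cannot prove that such witnesses exist for all $x,y$ in the domain, and we cannot prove that the values are unique.

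The standard remedy, which I would try first, is Solovay-style shortening of cuts. Pass to a subdomain $I$ of elements $y$ for which addition with $y$ provably exists and is functional, and which is closed under successor. Then shrink $I$ further so that it is closed under $+$ and $\cdot$. Functionality should come from $\TT_2$ injectivity together with a uniqueness argument on witnesses: compare two traces entry by entry, descending along the shared subterm structure.

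\textbf{Verifying $\Qr$.} With the cut in hand, checking $\Qr_4$--$\Qr_7$ reduces to extending a witness by one step. This extension is again a pairing operation, so it is available in $\TT$.

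Because the details are intricate and the result is already established, in the paper we simply invoke \cite[Thm.~11]{KM20} rather than reproducing this construction.
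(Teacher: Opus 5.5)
Your proposal ends exactly where the paper does: the paper gives no proof of this statement and imports it verbatim from \cite[Thm.~11]{KM20}, so the citation is the entire argument, and it is correct. The sketch before the citation is dispensable, and it would need repair to stand on its own. For instance, $\TT$ proves neither $x\sq x$ nor that every object is $\bot$ or a pair, so a model with an extra atom $a$ having no $\sq$-predecessors puts $a$ vacuously into your domain $N$ and refutes $\Qr_3$; reflexivity has to be built in, e.g.\ $N(x):=x\sq x\wedge\forall z\,(z\sq x\rightarrow z=\bot\vee\exists y\,(z=\langle y,\bot\rangle\wedge y\sq y))$. Likewise, numerals $\langle\langle m,\bot\rangle,\bot\rangle$ already have the shape $\langle\langle a,b\rangle,c\rangle$ of your trace entries, so reading entries off a witness via $\sq$ picks up spurious ones unless they are tagged.
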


\begin{theorem}[Visser \cite{Vis14}]\label{thm:visser}
A recursively axiomatized theory is interpretable in $\Rr$ if and only if it is
locally finitely satisfiable, i.e.\ every finite subset of its non-logical axioms
has a finite model.
\end{theorem}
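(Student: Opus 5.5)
The two directions are of very different difficulty. The forward direction is a compactness-style transfer. The converse needs a uniform arithmetical definition of a finite model, and that is where the work lies.

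\emph{($\Rightarrow$)} Suppose $\tau$ interprets $U$ in $\Rr$, and let $\varphi_1,\dots,\varphi_r$ be finitely many axioms of $U$. Each $\varphi_i^{\tau}$ is derived from finitely many instances of $\Rr_1$--$\Rr_5$, together with the finitely many sentences expressing non-emptiness of $\delta$ and totality and functionality of the translated function symbols. All numerals occurring in these finitely many instances are bounded by some $N$. I take the finite structure with universe $\{0,\dots,k\}$ for $k>N^2+1$, with $S$, $+$, $\cdot$ truncated at $k$ and $\le$ the usual order. In this structure, $\Rr_1$--$\Rr_3$ hold whenever the values are below $k$, $\Rr_4$ holds for every $n<k$, and $\Rr_5$ holds because $\le$ is linear. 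So this structure satisfies the relevant finite fragment. The $\tau$-definable inner structure is then a finite model of $\{\varphi_1,\dots,\varphi_r\}$.

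\emph{($\Leftarrow$)} Let $\varphi_0,\varphi_1,\dots$ be a recursive enumeration of the axioms of $U$. Since each finite set has a finite model, a search finds, for each $k$, a finite model $M_k$ of $\{\varphi_0,\dots,\varphi_k\}$ with universe $\{0,\dots,m_k-1\}$. The map $k\mapsto M_k$ is recursive, so it has a $\Sigma_1$ graph, and therefore a bounded graph relative to a computation witness. The plan is to translate the domain, equality, relations, and function graphs of $U$ as $\Delta_0$ conditions on an element $x$ together with a code $c$ of some $M_k$. The key facts about $\Rr$ used here are the standard ones: $\Rr$ proves every true $\Delta_0$ sentence with numeral bounds, and by $\Rr_4$ and $\Rr_5$ it proves that everything below $\bar n$ is a numeral $\le\bar n$. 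Consequently, for each standard $n$, $\Rr$ proves that if the selected structure is the standard $M_k$ with $k\ge n$, then $\varphi_n^{\tau}$ holds. This is because $\varphi_n^{\tau}$ is then a bounded sentence about a finite structure coded by a numeral.

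\emph{Main obstacle.} Without induction, $\Rr$ cannot "select" a single model $M_k$ uniformly, and in nonstandard models of $\Rr$ the purported code may be nonstandard garbage. My first attempt is a case distinction on a minimal failure point, done with a bounded search. Let $W(w)$ be the $\Delta_0$ statement "$w$ codes a correct computation of $M_0,\dots,M_j$ for all $j$ below $w$, each checked to satisfy $\varphi_0,\dots,\varphi_j$". Let the interpretation use the model attached to the least $w$ for which the verification breaks down.

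The point to establish is that, for each standard $n$, $\Rr$ proves: either the selected model is a numeral-coded $M_k$ with $k\ge n$, or the selection degenerates into a trivial case. The trivial case has to be arranged, by choosing the $M_k$ to form suitably compatible chains, so that it still satisfies $\varphi_n^{\tau}$. I expect verifying this disjunction inside $\Rr$, using only $\Rr_4$ and $\Rr_5$ below standard bounds, to be the delicate step. This is exactly the content of Visser's argument in \cite{Vis14}, which I would follow.
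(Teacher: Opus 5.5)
\paragraph{Verdict.} Your ($\Rightarrow$) direction is correct and standard. Your ($\Leftarrow$) direction is not a proof: the step that carries all the weight is deferred to \cite{Vis14}, and the selection mechanism you sketch in its place cannot work as stated. For comparison, the paper gives no proof of this theorem; it imports it from \cite{Vis14} as a black box.

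\paragraph{The forward direction.} The finitely many $\varphi_i^{\tau}$ follow from finitely many instances of $\Rr_1$--$\Rr_5$. So do non-emptiness of $\delta$ and totality and functionality of the translated symbols. A truncation $\{0,\dots,k\}$ with $k$ above every numeral value involved satisfies those instances, and the inner structure it defines is a finite model of the fragment.

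\paragraph{Where the converse fails.} The obstruction is the standard model. Since $\mathbb{N}\models\Rr$ and $\Rr\vdash\varphi^{\tau}$ for every axiom $\varphi$ of $U$, the structure $\tau$ defines in $\mathbb{N}$ must satisfy all of $U$ at once. It must therefore be infinite whenever $U$ has only infinite models. That is the case for $\Rr$ itself, and for $\WFm$ (Corollary~\ref{cor:closing}), which is exactly where the paper applies the theorem. No single $M_k$ can do this, and your selection never lands on one:
\begin{itemize}
\item $\Rr$ decides $\Delta_0$ facts about numerals correctly, so a verification of genuinely correct computations cannot fail at a numeral.
\item A failure point, if one exists, is therefore a non-numeral, and by $\Rr_4$ and $\Rr_5$ it lies above every numeral.
\item Hence every numeral-indexed stage is verified below it, and the last verified stage is not a numeral.
\end{itemize}
So the first alternative of your dichotomy (``a numeral-coded $M_k$ with $k\ge n$'') never occurs. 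Everything falls into the unspecified ``trivial case'' --- in particular the whole of $\mathbb{N}$, where there is no failure point at all. That trivial case would have to be a definable model of all of $U$ whose axioms $\Rr$ can verify despite arbitrary non-numeral elements.

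\paragraph{Why compatible chains do not fill the gap.} Take finite linear orders with both endpoints, in a language containing the immediate-successor relation. A chain of such models under substructure can only grow at the ends. So every properly increasing chain has a union lacking a least or a greatest element, and the union is not a model of this (r.e., locally finite) theory. Even when a union does model $U$, $\Rr$ cannot prove the unbounded sentences $\varphi_n^{\tau}$ about it in models with non-numeral elements.

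The missing idea is the whole construction: a translation that yields a genuine model of all of $U$ in $\mathbb{N}$ and is robust against the non-numeral part of arbitrary models of $\Rr$. As written, your ($\Leftarrow$) rests solely on the citation of \cite{Vis14}, as the paper's does. Your sketch does not reduce that result to routine verification, so it should not be presented as an outline of its proof.
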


\begin{theorem}[Damnjanovic \cite{Dam22}; see also \cite{Dam17,FF13}]\label{thm:chains}
$\TT\mutint\QTp\mutint\TC\mutint\Qr\mutint\AST\mutint\AST{+}\mathrm{EXT}
\mutint S^1_2$, \ and \
$\Rr\mutint\mathsf{WTC}^{-\varepsilon}\mutint\WT\mutint\mathsf{WQT}
\mutint\mathsf{WQT}^{*}$.
\end{theorem}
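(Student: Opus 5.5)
The statement collects results from the literature rather than proving anything new, so the plan is to assemble it from the cited sources. Since $\intp$ is reflexive and transitive, $\mutint$ is an equivalence relation. It therefore suffices to exhibit, for each of the two displayed chains, a connected family of pairwise mutual interpretabilities, each of which is available in the cited work. I would verify each link in the exact form in which it is stated there.

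For the $\Qr$-chain I would proceed in the following order.
\begin{enumerate}
\item $\TC\mutint\Qr$: the direction $\Qr\intp\TC$ is \cite{Gan09,Sve07}, and $\TC\intp\Qr$ is \cite{Vis09}.
\item $\Qr\mutint\AST$ and $\AST\mutint\AST{+}\mathrm{EXT}$: these come from \cite{CH70,MM94} together with the uniform treatment in \cite{Dam17}. The hard direction, interpreting $\Qr$ in adjunctive set theory, uses Kuratowski pairing and von Neumann–style numerals.
\item $S^1_2\mutint\Qr$: this is \cite{FF13}. The direction $\Qr\intp S^1_2$ is trivial, and the converse goes through Solovay-style shortening of cuts.
\item $\TT$ and $\QTp$: here I would invoke \cite{Dam22}, which shows $\TT\intp\QTp\intp\Qr$, closing the loop with Theorem~\ref{thm:QT}. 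By \cite{Dam22}, $\QTp\mutint\TC$ as well, which links $\QTp$ into the chain independently of $\TT$.
\end{enumerate}
Transitivity then yields the whole first chain.

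For the $\Rr$-chain, the link $\WT\mutint\Rr$ is Theorem~\ref{thm:WTR}, and $\mathsf{WTC}^{-\varepsilon}\mutint\Rr$ is \cite{HH14}. For $\mathsf{WQT}$ and $\mathsf{WQT}^{*}$ I would argue uniformly via Theorem~\ref{thm:visser}. Both theories are recursively axiomatized. Each finite set of their axioms mentions only finitely many closed terms, so it is satisfied in a finite truncation of the term model in which all sufficiently large terms are collapsed to an absorbing element. This gives interpretability in $\Rr$. The converse direction follows because each of them contains (or directly interprets) the closed-term schemes of $\WT$, so that $\Rr\intp\WT\intp\mathsf{WQT}\intp\mathsf{WQT}^{*}$ by Theorem~\ref{thm:WTR}.

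The main obstacle is not in this assembly but in the deep links that are being cited. Chief among them is $\TT\intp\Qr$, where Damnjanovic must arithmetize trees in a theory without induction. For the purposes of the present paper, I would simply check that each cited theorem is stated for exactly the axiomatizations used here, in particular that $\TT$ is the version in Figure~\ref{fig:WTT}. I would then rely on those results rather than reprove them.
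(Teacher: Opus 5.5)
The paper does not prove this theorem at all. Both chains are imported wholesale from \cite{Dam22} (with \cite{Dam17,FF13}), and each link is used only in the cited form. Your citation-by-citation assembly of the $\Qr$-chain, and of the links $\WT\mutint\Rr$ and $\mathsf{WTC}^{-\varepsilon}\mutint\Rr$, is in line with that. The gap is in the one place where you stop citing and argue yourself: the links for $\mathsf{WQT}$ and $\mathsf{WQT}^{*}$, whose axiomatizations you do not have in hand.

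For $\Rr\intp\mathsf{WQT}\intp\mathsf{WQT}^{*}$, you claim these theories ``contain (or directly interpret)'' the closed-term schemes of $\WT$, but you give no basis for either claim.
\begin{itemize}
\item To my understanding, $\mathsf{WQT}$ is the weak schematic counterpart of the concatenation theory $\QTp$, not a theory in $L_{\TT}$. If so, it cannot literally contain the $\WT$ schemes.
\item A direct interpretation of $\WT$ would need a pairing on strings that is provably injective on closed terms, together with a translation of $\sq$ under which the $\mathrm{WT}_2$ instances become provable. Concatenation does not supply such a pairing for free, and this paper has to work to obtain one for multisets. You supply neither ingredient.
\item The step $\mathsf{WQT}\intp\mathsf{WQT}^{*}$ presupposes, unchecked, that $\mathsf{WQT}^{*}$ extends $\mathsf{WQT}$.
\end{itemize}

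The other direction has a similar problem. Your uniform ``truncate and collapse large terms to an absorbing element'' argument only works if the theory has no universal axioms that truncation destroys. The remark following Lemma~\ref{lem:MB} shows that cancellation-type axioms fail in exactly such models. For the hybrid theory $\mathsf{WQT}^{*}$ you would have to inspect its axioms before invoking Theorem~\ref{thm:visser}.

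As written, these two links are unproved. The repair is to cite \cite{Dam22} (and \cite{KM20}) for $\mathsf{WQT}\mutint\Rr$ and $\mathsf{WQT}^{*}\mutint\Rr$, as the paper does, rather than reconstruct them from guessed axiomatizations.
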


Here $\TC$ is Grzegorczyk's concatenation theory \cite{Grz05,GZ08}, shown mutually
interpretable with $\Qr$ through \cite{Gan09,Sve07,Vis09,Dam17}; $\AST$ is adjunctive
set theory, with axioms $\exists y\,\forall x\,\neg(x\in y)$ and
$\forall x,y\,\exists z\,\forall u\,(u\in z\leftrightarrow u\in x\vee u=y)$;
$\mathsf{WTC}^{-\varepsilon}$ is the weak concatenation theory of \cite{HH14}; and
$S^1_2$ is Buss's bounded arithmetic, with $S^1_2\mutint\Qr$ by \cite{FF13}. We use
Theorem~\ref{thm:chains} only through the two displayed chains, each member invoked
in the exact form cited.

\section{The theories $\WFm$ and $\Fm$}\label{sec:theories}

\subsection{Hereditarily finite multisets}
A \emph{multiset} over a set $X$ is a function $m\colon X\to\mathbb{N}$ with finite
support. The class $\HFM$ of \emph{hereditarily finite multisets} is
$\bigcup_{n}\mathcal{M}_n$, where $\mathcal{M}_0=\emptyset$ and
$\mathcal{M}_{n+1}$ is the set of finite multisets over $\mathcal{M}_n$; the
\emph{rank} of $m\in\HFM$ is the least $n$ with $m\in\mathcal{M}_{n+1}$. We write
$\emptyset$ for the empty multiset, $\enc{x}$ for the singleton multiset containing
$x$ once, and $x\U y$ for multiset union (multiplicities add). \emph{Immediate
membership} is defined by $u\in_1 v:\Leftrightarrow\exists w\,(v=w\U\enc{u})$, and
\emph{containment} $\sq$ by recursion on rank:
\[
x\sq t\quad:\Longleftrightarrow\quad x=t\ \text{ or }\ \exists u\,(u\in_1 t
\wedge x\sq u).
\]
The intended model is $\mm:=(\HFM;\emptyset,\enc{\cdot},\U,\sq)$, a structure for
the language $L=\{\emptyset,\enc{\cdot},\U,\sq\}$.

We record the algebraic facts used throughout: $(\HFM,\U,\emptyset)$ is the free
commutative monoid on the set of singletons $\{\enc{m}:m\in\HFM\}$. In particular it
is cancellative, and every element has a factorization into singletons, unique up to
order.

\subsection{The finitely axiomatized theory $\Fm$}
The axioms of $\Fm$ are listed in Figure~\ref{fig:F}. All variables are universally
quantified.

\begin{figure}[t]
\centering
\fbox{\begin{minipage}{0.96\textwidth}
\textbf{Axioms of $\Fm$}
\begin{itemize}\setlength\itemsep{1pt}
\item[$\mathrm{F}_1$:] $(x\U y)\U z=x\U(y\U z)$ \hfill(associativity)
\item[$\mathrm{F}_2$:] $x\U y=y\U x$ \hfill(commutativity)
\item[$\mathrm{F}_3$:] $x\U\emptyset=x$ \hfill(neutrality)
\item[$\mathrm{F}_4$:] $x\U z=y\U z\rightarrow x=y$ \hfill(cancellation)
\item[$\mathrm{F}_5$:] $\enc{x}\neq\emptyset$ \hfill(non-degeneracy)
\item[$\mathrm{F}_6$:] $x\U y=\emptyset\rightarrow x=\emptyset$ \hfill(positivity)
\item[$\mathrm{F}_7$:] $\enc{x}=\enc{y}\rightarrow x=y$ \hfill(injectivity)
\item[$\mathrm{F}_8$:] $u\U v=\enc{x}\U w\rightarrow
      \exists w'\,(u=\enc{x}\U w'\vee v=\enc{x}\U w')$ \hfill(Levi property)
\item[$\mathrm{F}_9$:] $x\sq t\leftrightarrow\bigl(x=t\vee\exists w\,\exists u\,
      (t=w\U\enc{u}\wedge x\sq u)\bigr)$ \hfill(containment recursion)
\end{itemize}
\end{minipage}}
\caption{The theory $\Fm$: $\mathrm{F}_1$--$\mathrm{F}_7$ are universal,
$\mathrm{F}_8$--$\mathrm{F}_9$ are $\forall\exists$.}\label{fig:F}
\end{figure}

\begin{proposition}[Soundness]\label{prop:soundF}
$\mm\models\Fm$.
\end{proposition}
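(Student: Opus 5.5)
The plan is to verify the nine axioms one at a time in $\mm$, using the algebraic fact recorded above: $(\HFM,\U,\emptyset)$ is the free commutative monoid on the singletons $\{\enc{m}:m\in\HFM\}$. Equivalently, identify each $x\in\HFM$ with its multiplicity function $x\colon\HFM\to\mathbb{N}$ (finite support), under which $x\U y$ is pointwise addition, $\emptyset$ is the zero function, and $\enc{m}$ is the indicator of $m$.

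Under this identification $\mathrm{F}_1$--$\mathrm{F}_3$ are associativity, commutativity and neutrality of pointwise addition in $\mathbb{N}$. $\mathrm{F}_4$ is cancellation in $\mathbb{N}$, applied pointwise. $\mathrm{F}_6$ holds because $a+b=0$ in $\mathbb{N}$ forces $a=0$. $\mathrm{F}_5$ holds because $\enc{x}(x)=1\neq0$. For $\mathrm{F}_7$, if $\enc{x}=\enc{y}$ then $\enc{y}(x)=\enc{x}(x)=1$, hence $x=y$.

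For the Levi property $\mathrm{F}_8$, suppose $u\U v=\enc{x}\U w$. Then $u(x)+v(x)=1+w(x)\ge1$, so $u(x)\ge1$ or $v(x)\ge1$. In the first case put $w':=u-\enc{x}$, the pointwise difference, which is still a finite-support function into $\mathbb{N}$, hence an element of $\HFM$, and which satisfies $u=\enc{x}\U w'$. The second case is symmetric. One should check that $w'$ lies in $\HFM$: its support is contained in that of $u$, so it is a finite multiset over some $\mathcal{M}_n$.

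For $\mathrm{F}_9$ I would unwind the definition of $\sq$. By definition, $x\sq t$ iff $x=t$ or there is some $u$ with $u\in_1 t$ and $x\sq u$. Unfolding $u\in_1 t$ as $\exists w\,(t=w\U\enc{u})$ gives exactly the right-hand side of $\mathrm{F}_9$.

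The main obstacle is whether this recursive definition is legitimate, i.e.\ whether it determines a unique relation. The recursion is on the rank of $t$, and it is well founded because $u\in_1 t$ implies $\mathrm{rank}(u)<\mathrm{rank}(t)$: if $t=w\U\enc{u}$ then $u$ is in the support of $t$. Since $t\in\mathcal{M}_{n+1}$ with $n=\mathrm{rank}(t)$, we get $u\in\mathcal{M}_n$, so $\mathrm{rank}(u)<n$. Granting this, the unique relation defined by the recursion satisfies the biconditional $\mathrm{F}_9$ verbatim, and so $\mm\models\Fm$.
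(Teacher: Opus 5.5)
Your proof is correct and follows essentially the same route as the paper's: you verify the axioms directly in the free commutative monoid and read $\mathrm{F}_9$ off the recursive definition of $\sq$. Working with multiplicity functions only makes explicit what the paper compresses into ``uniqueness of factorization'' for $\mathrm{F}_4$ and $\mathrm{F}_8$. Your two extra checks are sound and go slightly beyond the paper's text: that the witness $w'$ lies in $\HFM$, and that the rank recursion defining $\sq$ is well founded.
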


\begin{proof}
$\mathrm{F}_1$--$\mathrm{F}_3$ are the monoid laws. $\mathrm{F}_4$ and
$\mathrm{F}_8$ hold in any free commutative monoid: cancellativity is immediate from
freeness, and if the singleton $\enc{x}$ occurs in the factorization of $u\U v$,
then by uniqueness of factorization it occurs in that of $u$ or of $v$, which yields
the witness $w'$. $\mathrm{F}_5$--$\mathrm{F}_7$ are immediate.
$\mathrm{F}_9$ restates the recursive definition of $\sq$, since
$\exists w\,(t=w\U\enc{u})$ expresses $u\in_1 t$.
\end{proof}

\subsection{Normal forms and the schematic theory $\WFm$}
Fix the length-lexicographic order $\preceq$ on closed $L$-terms, viewed as
strings over the symbols $\langle$, $\rangle$, $\U$, $\emptyset$, and define
the \emph{normal} closed $L$-terms
recursively: $\emptyset$ is normal; if $t_1\preceq\dots\preceq t_k$ $(k\ge 1)$ are
normal, then $\enc{t_1}\U(\enc{t_2}\U(\dots\U\enc{t_k}))$ is normal. Every closed
term $s$ has a \emph{normal form} $\nf(s)$, computed by recursively normalizing
arguments, flattening $\U$, deleting $\emptyset$-factors, and sorting.

\begin{lemma}[Normal Form Lemma]\label{lem:nf}
For closed $L$-terms $s,t$ the following are equivalent:
\textup{(i)} $\mm\models s=t$; \ \textup{(ii)} $\nf(s)\equiv\nf(t)$; \
\textup{(iii)} $\mathrm{F}_1\text{--}\mathrm{F}_3\vdash s=t$.
\end{lemma}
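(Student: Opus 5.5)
We prove the cycle (iii)$\Rightarrow$(i)$\Rightarrow$(ii)$\Rightarrow$(iii).

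\emph{(iii)$\Rightarrow$(i).} This is immediate from soundness: $\mm\models\mathrm{F}_1$--$\mathrm{F}_3$ by Proposition~\ref{prop:soundF}.

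\emph{(ii)$\Rightarrow$(iii).} We show that $\mathrm{F}_1\text{--}\mathrm{F}_3\vdash s=\nf(s)$ for every closed $s$, by induction on the construction of $s$ following the normalization procedure. Each step of $\nf$ is justified by a derivable equation, and equality is a congruence for $\enc{\cdot}$ and $\U$:
\begin{itemize}
\item replacing an argument by its normal form is covered by the induction hypothesis;
\item flattening nested $\U$ to right-associated form uses $\mathrm{F}_1$;
\item deleting $\emptyset$-factors uses $\mathrm{F}_3$, together with $\mathrm{F}_2$ to move each $\emptyset$ to the right;
\item sorting the singleton factors uses $\mathrm{F}_2$ with $\mathrm{F}_1$ to transpose adjacent factors. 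A right-associated product can be permuted arbitrarily, since adjacent transpositions $a\U(b\U c)=b\U(a\U c)$ follow from $\mathrm{F}_1,\mathrm{F}_2$.
\end{itemize}
Given $\nf(s)\equiv\nf(t)$, we then obtain $s=\nf(s)=\nf(t)=t$.

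\emph{(i)$\Rightarrow$(ii).} This is the substantive direction, namely that distinct normal terms denote distinct multisets. Since (iii)$\Rightarrow$(i) and the previous step give $\mm\models s=\nf(s)$, it suffices to show that the map $\nf$-term $\mapsto$ value is injective on normal terms. We argue by strong induction on the combined length of two normal terms $n,n'$ with equal values.
\begin{itemize}
\item If either term is $\emptyset$, then the other value is empty. A nonempty normal term denotes a union of $k\ge1$ singletons, which is nonempty, so both terms are $\emptyset$.
\item Otherwise $n=\enc{t_1}\U\cdots\U\enc{t_k}$ and $n'=\enc{t'_1}\U\cdots\U\enc{t'_l}$. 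Their values are products of singleton generators in the free commutative monoid $(\HFM,\U,\emptyset)$. Uniqueness of factorization gives $k=l$ and a bijection matching $\enc{t_i}$ with $\enc{t'_{\sigma(i)}}$ of equal value. Injectivity of $\enc{\cdot}$ in $\mm$ then gives equal values for $t_i$ and $t'_{\sigma(i)}$.
\item By the induction hypothesis, since the $t_i$ are normal and shorter, $t_i\equiv t'_{\sigma(i)}$ syntactically. So the two lists are equal as multisets of terms.
\item Both lists are sorted by the total order $\preceq$, so they coincide termwise, and hence $n\equiv n'$.
\end{itemize}

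The main obstacle is only bookkeeping. Two points need care. First, $\preceq$ must be a genuine total order on strings, so that sorting determines the sequence uniquely. Second, the matching argument uses the free-monoid facts recorded earlier, not the axioms, which is legitimate because we work in $\mm$.
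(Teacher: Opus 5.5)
Your proof is correct and follows the paper's argument: the same cycle, with (ii)$\Rightarrow$(iii) obtained by deriving $s=\nf(s)$ from $\mathrm{F}_1$--$\mathrm{F}_3$ one normalization step at a time, and (i)$\Rightarrow$(ii) obtained from injectivity of evaluation on normal terms via unique factorization and sortedness. The differences are cosmetic: you induct on combined term length where the paper inducts on rank, and you state explicitly the step $\mm\models s=\nf(s)$, which the paper leaves implicit.
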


\begin{proof}
(iii)$\Rightarrow$(i) is Proposition~\ref{prop:soundF}. For
(i)$\Rightarrow$(ii), the value of a normal term determines the multiset of values
of its top-level constituents; by induction on rank, values of normal terms
determine the terms, and sortedness fixes the arrangement, so evaluation is
injective on normal terms. For (ii)$\Rightarrow$(iii), each normalization step
(re-association, transposition of adjacent factors, deletion of an
$\emptyset$-factor) is an instance of $\mathrm{F}_1$--$\mathrm{F}_3$ under the
congruence rules of equational logic, whence
$\mathrm{F}_1\text{--}\mathrm{F}_3\vdash s=\nf(s)$, and likewise for $t$.
\end{proof}

For a closed term $t$, let $\hull(t)$ be the finite set of normal terms $s$ with
$\mm\models s\sq t$. The theory $\WFm$ has the axioms
$\mathrm{F}_1,\mathrm{F}_2,\mathrm{F}_3$ together with the schemes
\begin{itemize}\setlength\itemsep{1pt}
\item[$\mathrm{W}_1$:] $\neg\,(s=t)$\quad for closed terms $s,t$ with
$\nf(s)\not\equiv\nf(t)$;
\item[$\mathrm{W}_2$:] $\forall x\,\bigl(x\sq t\leftrightarrow
\bigvee_{s\in\hull(t)}x=s\bigr)$\quad for each closed term $t$.
\end{itemize}
By Lemma~\ref{lem:nf}, together with the evident computability of $\nf$ and
$\hull$, the instance sets are decidable, so $\WFm$ is recursively
axiomatized; and $\mm\models\WFm$ by construction. Note that cancellation
$\mathrm{F}_4$ is deliberately \emph{not} an axiom of $\WFm$; this is what keeps
finite models of finite fragments available (Section~\ref{sec:thmA}).

\subsection{Basic consequences of $\Fm$}
\begin{fact}\label{fact:one}
$\Fm\vdash\enc{x}\U w\neq\emptyset$.
\end{fact}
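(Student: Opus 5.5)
The claim follows at once from positivity and non-degeneracy. I argue by contradiction and work inside $\Fm$. Suppose $\enc{x}\U w=\emptyset$. Taking $\enc{x}$ for $x$ and $w$ for $y$, the instance of positivity $\mathrm{F}_6$ reads $\enc{x}\U w=\emptyset\rightarrow\enc{x}=\emptyset$. Our assumption therefore gives $\enc{x}=\emptyset$. This contradicts non-degeneracy $\mathrm{F}_5$. Hence $\Fm\vdash\enc{x}\U w\neq\emptyset$, and generalizing over $x$ and $w$ gives the universal statement.

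The only point to check is the orientation of the union. $\mathrm{F}_6$ concludes that the \emph{left} summand is empty, and here the singleton already occupies the left position, so no use of commutativity $\mathrm{F}_2$ is needed. Had the term been written $w\U\enc{x}$, the natural form in $\mathrm{F}_9$, one would first rewrite it by $\mathrm{F}_2$ to $\enc{x}\U w$ and then argue as above. Nothing here is a real obstacle: the derivation uses only $\mathrm{F}_5$ and $\mathrm{F}_6$, possibly together with $\mathrm{F}_2$.
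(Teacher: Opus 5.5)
Your proof is correct and is the paper's argument: instantiate positivity $\mathrm{F}_6$ with $\enc{x}$ as the left summand to get $\enc{x}=\emptyset$, which contradicts non-degeneracy $\mathrm{F}_5$. Your remark on orientation is also accurate: $\mathrm{F}_2$ is needed only when the singleton is the right summand, as in the later uses with $w\U\enc{u}$ (for instance in $(\TT_3)^{\tau}$).
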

\begin{proof}
If $\enc{x}\U w=\emptyset$ then $\mathrm{F}_6$ gives $\enc{x}=\emptyset$,
contradicting $\mathrm{F}_5$.
\end{proof}

\begin{fact}[Atomicity]\label{fact:atom}
$\Fm\vdash\enc{x}=u\U v\rightarrow
\bigl((u=\enc{x}\wedge v=\emptyset)\vee(u=\emptyset\wedge v=\enc{x})\bigr)$.
\end{fact}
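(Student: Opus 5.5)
We prove Atomicity from the Levi property $\mathrm{F}_8$, together with cancellation $\mathrm{F}_4$, positivity $\mathrm{F}_6$ and Fact~\ref{fact:one}. Assume $\enc{x}=u\U v$.

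\emph{Step 1.} Rewrite the hypothesis as $u\U v=\enc{x}\U\emptyset$, using $\mathrm{F}_3$. Axiom $\mathrm{F}_8$ then gives a $w'$ with $u=\enc{x}\U w'$ or $v=\enc{x}\U w'$. By commutativity $\mathrm{F}_2$ the two cases are symmetric, so we treat only the first.

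\emph{Step 2.} Suppose $u=\enc{x}\U w'$. Substituting into the hypothesis and using $\mathrm{F}_1$, we get
\[
\enc{x}\U\emptyset=\enc{x}=(\enc{x}\U w')\U v=\enc{x}\U(w'\U v).
\]
Commuting both sides with $\mathrm{F}_2$ and applying cancellation $\mathrm{F}_4$ with $z=\enc{x}$ yields $\emptyset=w'\U v$. Positivity $\mathrm{F}_6$ gives $w'=\emptyset$. Applying $\mathrm{F}_6$ again, after commuting to $v\U w'=\emptyset$, gives $v=\emptyset$. Hence $u=\enc{x}\U\emptyset=\enc{x}$ by $\mathrm{F}_3$, and $v=\emptyset$, which is the first disjunct.

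\emph{Step 3.} In the symmetric case $v=\enc{x}\U w'$, the same argument gives $v=\enc{x}$ and $u=\emptyset$, which is the second disjunct.

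The only point needing care is Step 2. The equation $\emptyset=w'\U v$ comes from cancelling the common factor $\enc{x}$, so it genuinely needs $\mathrm{F}_4$; without cancellation the conclusion can fail. Once that equation is in hand, positivity $\mathrm{F}_6$ finishes the argument, and Fact~\ref{fact:one} is not actually needed. Every step uses only $\mathrm{F}_1$--$\mathrm{F}_4$, $\mathrm{F}_6$ and $\mathrm{F}_8$.
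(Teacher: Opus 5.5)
Your proof is correct and matches the paper's argument step for step: $\mathrm{F}_3$ puts the hypothesis in the form required by $\mathrm{F}_8$, the Levi property splits into two symmetric cases, $\mathrm{F}_1$ and $\mathrm{F}_4$ cancel $\enc{x}$, and $\mathrm{F}_6$ with $\mathrm{F}_2$ gives $w'=v=\emptyset$. Your side remark is also accurate: the paper's proof does not use Fact~\ref{fact:one} either, so you could drop it from your opening list of ingredients.
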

\begin{proof}
From $u\U v=\enc{x}\U\emptyset$ (by $\mathrm{F}_3$), $\mathrm{F}_8$ yields $w'$
with $u=\enc{x}\U w'$ or $v=\enc{x}\U w'$. In the first case,
$\enc{x}\U\emptyset=\enc{x}\U(w'\U v)$ by $\mathrm{F}_1$, so $\mathrm{F}_4$ gives
$w'\U v=\emptyset$, and $\mathrm{F}_6$ (with $\mathrm{F}_2$) gives
$w'=v=\emptyset$, whence $u=\enc{x}$. The second case is symmetric.
\end{proof}

\begin{lemma}[Peeling Lemma]\label{lem:peel}
For each fixed $k\ge 1$,
\[
\Fm\vdash\ \enc{a_1}\U\dots\U\enc{a_k}=w\U\enc{c}\ \rightarrow\
\textstyle\bigvee_{i\le k}\,c=a_i .
\]
\end{lemma}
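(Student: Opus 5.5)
We argue by external induction on $k$, proving for each fixed $k$ a single sentence of $\Fm$. Write $A_k$ for $\enc{a_1}\U\dots\U\enc{a_k}$, associated to the right as in normal terms. By $\mathrm{F}_1$--$\mathrm{F}_2$ the bracketing and order of the factors are immaterial. The key tool is the Levi property $\mathrm{F}_8$, applied with the roles arranged so that the singleton $\enc{c}$ is the one being located. From $A_k=w\U\enc{c}$ and $\mathrm{F}_2$ we get $\enc{a_1}\U A'=\enc{c}\U w$, where $A'$ is the product of the remaining $k-1$ singletons; for $k=1$ we set $A'=\emptyset$ and use $\mathrm{F}_3$.

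Applying $\mathrm{F}_8$ with $u=\enc{a_1}$, $v=A'$ and $x=c$ yields some $w'$ with either $\enc{a_1}=\enc{c}\U w'$ or $A'=\enc{c}\U w'$.

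\textbf{First case.} Fact~\ref{fact:atom} applied to $\enc{a_1}=\enc{c}\U w'$ gives either $\enc{c}=\enc{a_1}$ with $w'=\emptyset$, or $\enc{c}=\emptyset$. The latter is excluded by $\mathrm{F}_5$. Hence $\enc{c}=\enc{a_1}$, and injectivity $\mathrm{F}_7$ gives $c=a_1$.

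\textbf{Second case.} For $k=1$ we have $\emptyset=\enc{c}\U w'$, which is impossible by Fact~\ref{fact:one}. For $k\ge 2$, $A'$ is a product of $k-1$ singletons equal to $\enc{c}\U w'=w'\U\enc{c}$. The induction hypothesis for $k-1$, instantiated with the variables $a_2,\dots,a_k$, then gives $\bigvee_{2\le i\le k}c=a_i$.

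Combining the two cases proves the disjunction over all $i\le k$. Since each step is a fixed finite derivation, the argument yields an $\Fm$-proof for every fixed $k$. Only $\mathrm{F}_1$--$\mathrm{F}_3$, $\mathrm{F}_5$, $\mathrm{F}_7$, $\mathrm{F}_8$ and the two Facts are used, and through Fact~\ref{fact:atom} also $\mathrm{F}_4$ and $\mathrm{F}_6$.

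The main obstacle is the first case: the Levi property alone only yields $\enc{a_1}=\enc{c}\U w'$, and extracting $c=a_1$ needs atomicity, hence cancellation. That is exactly why Fact~\ref{fact:atom} is established first. The remaining steps are routine bookkeeping with associativity and commutativity, so that $\mathrm{F}_8$ can be applied with the correct factor order.
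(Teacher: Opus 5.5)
Your proof is correct and follows essentially the paper's route: external induction on $k$, one application of $\mathrm{F}_8$ to locate $\enc{c}$, Fact~\ref{fact:atom} together with $\mathrm{F}_5$ and $\mathrm{F}_7$ for the singleton factor, and the induction hypothesis for the remaining product. The differences are cosmetic: you split off the first factor $\enc{a_1}$ rather than the last, and you treat the base case $k=1$ through the same $\mathrm{F}_8$ step, disposing of $\emptyset=\enc{c}\U w'$ by Fact~\ref{fact:one}, whereas the paper applies Fact~\ref{fact:atom} to the base case directly.
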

\begin{proof}
Induction on $k$ (in the metatheory; each instance is a single $\Fm$-derivation).
For $k=1$: by Fact~\ref{fact:atom} applied to $\enc{a_1}=w\U\enc{c}$, either
$\enc{c}=\emptyset$, contradicting $\mathrm{F}_5$, or $\enc{c}=\enc{a_1}$, whence
$c=a_1$ by $\mathrm{F}_7$. For $k+1$: apply $\mathrm{F}_8$ to
$\bigl(\enc{a_1}\U\dots\U\enc{a_k}\bigr)\U\enc{a_{k+1}}=\enc{c}\U w$. If the second
disjunct holds, $\enc{a_{k+1}}=\enc{c}\U w'$, and Fact~\ref{fact:atom} with
$\mathrm{F}_5,\mathrm{F}_7$ gives $c=a_{k+1}$. If the first holds,
$\enc{a_1}\U\dots\U\enc{a_k}=\enc{c}\U w'$, and the induction hypothesis applies.
\end{proof}

\begin{proposition}\label{prop:FextendsWF}
$\Fm$ proves every axiom of $\WFm$; hence $\Fm\supseteq\WFm$.
\end{proposition}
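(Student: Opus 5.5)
The plan is to verify the three kinds of axioms of $\WFm$ in turn. $\mathrm{F}_1$--$\mathrm{F}_3$ are axioms of $\Fm$, so only the schemes $\mathrm{W}_1$ and $\mathrm{W}_2$ need work. Both will be proved by metatheoretic induction on closed terms. By Lemma~\ref{lem:nf} (iii), $\Fm\vdash s=\nf(s)$ for every closed $s$. We may therefore assume throughout that the closed terms involved are normal, and replace $t$ by $\nf(t)$ in $\mathrm{W}_2$, since $\hull$ depends only on the value of $t$.

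\textbf{Scheme $\mathrm{W}_1$.} We show, by induction on $|s|+|t|$ for distinct normal terms $s,t$, that $\Fm\vdash s\neq t$.
If one term is $\emptyset$ and the other is $\enc{t_1}\U\dots\U\enc{t_k}$ with $k\ge1$, Fact~\ref{fact:one} settles it.
Otherwise $s=\enc{a_1}\U\dots\U\enc{a_k}$ and $t=\enc{b_1}\U\dots\U\enc{b_l}$, with sorted normal constituents. The multisets $[a_i]$ and $[b_j]$ differ as multisets of normal terms: if they were equal, then, both lists being sorted, $s\equiv t$.
We argue inside $\Fm$ by contradiction, assuming $s=t$, and proceed by induction on $k+l$.
\begin{itemize}\setlength\itemsep{1pt}
\item[(a)] Since $t=(\enc{b_1}\U\dots\U\enc{b_{l-1}})\U\enc{b_l}$, Lemma~\ref{lem:peel} gives $\bigvee_i b_l=a_i$.
\item[(b)] Each disjunct $b_l=a_i$ with $a_i\not\equiv b_l$ is refuted by the outer induction hypothesis, since $a_i,b_l$ are smaller normal terms.
\item[(c)] The surviving disjuncts have $a_i\equiv b_l$. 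In that case we rearrange $s$ by $\mathrm{F}_1$,$\mathrm{F}_2$ so that $\enc{a_i}$ is last, and cancel it by $\mathrm{F}_4$. This leaves an equation between the normal forms of the two shorter products. Their constituent multisets still differ, so they are distinct normal terms, refuted by the inner induction.
\item[(d)] When one side becomes empty, Fact~\ref{fact:one} applies. If one side is empty initially (say $l$ exhausted first while $k>0$), Fact~\ref{fact:one} applies likewise.
\end{itemize}
The only delicacy is arranging the nested induction so each step uses a strictly smaller instance. Cancellation $\mathrm{F}_4$ is essential here.

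\textbf{Scheme $\mathrm{W}_2$.} We proceed by induction on the rank (or length) of normal $t$.
For $t=\emptyset$: by $\mathrm{F}_9$, $x\sq\emptyset$ iff $x=\emptyset$ or $\exists w,u\,(\emptyset=w\U\enc{u}\wedge x\sq u)$. The second disjunct is refuted by $\mathrm{F}_2$ and Fact~\ref{fact:one}. Also $\hull(\emptyset)=\{\emptyset\}$.
For $t=\enc{a_1}\U\dots\U\enc{a_k}$, $\mathrm{F}_9$ gives $x\sq t\leftrightarrow x=t\vee\exists w,u\,(t=w\U\enc{u}\wedge x\sq u)$. The steps for the right-hand side are:
\begin{itemize}\setlength\itemsep{1pt}
\item[(1)] Left to right: Lemma~\ref{lem:peel} gives $u=a_i$ for some $i$. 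Hence $x\sq a_i$, and by the induction hypothesis for $a_i$, $x$ equals some element of $\hull(a_i)$.
\item[(2)] Right to left: for each $i$, the witness is $u:=a_i$, with $w$ the product of the remaining factors, using $\mathrm{F}_1$--$\mathrm{F}_3$.
\end{itemize}
Thus $\Fm\vdash x\sq t\leftrightarrow x=t\vee\bigvee_i\bigvee_{s\in\hull(a_i)}x=s$. This matches $\mathrm{W}_2$ because $\hull(t)=\{t\}\cup\bigcup_i\hull(a_i)$ holds in $\mm$ by the definition of $\sq$.

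The main obstacle is step $\mathrm{W}_1$: organizing the double induction and checking that after cancellation the residual terms are again normal with differing constituent multisets. $\mathrm{W}_2$ is then routine given the Peeling Lemma.
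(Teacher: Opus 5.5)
Your proof is correct and takes the paper's route: reduce to normal terms via Lemma~\ref{lem:nf}, refute the $\mathrm{W}_1$ instances by complete induction on size using the Peeling Lemma and $\mathrm{F}_4$-cancellation, and prove $\mathrm{W}_2$ by rank induction from $\mathrm{F}_9$ and the Peeling Lemma. The differences are only organizational: the paper first splits at the meta-level on whether the peeled constituent occurs syntactically on the other side, rather than peeling first, and your inner induction on $k+l$ is already covered by the outer one on $|s|+|t|$, since cancellation shrinks the total size.
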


\begin{proof}
It suffices to treat the schemes. \emph{Scheme $\mathrm{W}_1$.} By
Lemma~\ref{lem:nf} we may assume $s,t$ normal and distinct, say
$s\equiv\enc{c_1}\U\dots\U\enc{c_k}$ and $t\equiv\enc{d_1}\U\dots\U\enc{d_l}$ with
$\{c_1,\dots,c_k\}\neq\{d_1,\dots,d_l\}$ as multisets of normal terms. We argue by
complete induction on the total number of symbols of the pair $(s,t)$; for
pairs of proper subterms this measure strictly decreases. If exactly one of
$k,l$ is $0$, then $s\neq t$ follows from Fact~\ref{fact:one}. Otherwise we
distinguish two cases. \emph{Case 1: some $c_i$ is syntactically $d_1$.}
Working in $\Fm$, suppose $s=t$. By $\mathrm{F}_1,\mathrm{F}_2$ both sides may
be rewritten with the factor $\enc{d_1}$ rightmost, and $\mathrm{F}_4$ cancels
it, yielding a provable equality of two normal terms whose top-level multisets
are $\{c_j:j\neq i\}$ and $\{d_2,\dots,d_l\}$. These are still distinct as
multisets, having arisen from distinct multisets by removal of one occurrence
of the same element, and the pair is strictly smaller; the induction
hypothesis refutes the equality --- contradiction. \emph{Case 2: no $c_i$ is
syntactically $d_1$.} Working in $\Fm$, suppose $s=t$. By
Lemma~\ref{lem:peel}, applied with the explicit decomposition
$t=\bigl(\enc{d_2}\U\dots\U\enc{d_l}\bigr)\U\enc{d_1}$ read against $s$, we
obtain the provable disjunction $\bigvee_{i\le k}d_1=c_i$; each disjunct is
refuted by the induction hypothesis applied to the strictly smaller pair
$(c_i,d_1)$ of distinct normal terms --- contradiction.
\emph{Scheme $\mathrm{W}_2$.} Fix a closed $t$; by Lemma~\ref{lem:nf} assume $t$
normal with top-level constituents $t_1,\dots,t_k$. For
($\Leftarrow$): for each $s\in\hull(t)$ there is a finite chain
$s\sq t_{j_1}\sq\dots\sq t$ along immediate memberships; each step is witnessed in
$\mathrm{F}_9$ by the explicit decomposition of the relevant normal term, so
$\Fm\vdash s\sq t$. For ($\Rightarrow$): argue by induction on the rank of $t$.
By $\mathrm{F}_9$, $x\sq t$ gives $x=t$ or witnesses $w,u$ with $t=w\U\enc{u}$ and
$x\sq u$. By Lemma~\ref{lem:peel}, provably $u=t_j$ for some $j\le k$ (if $k=0$,
Fact~\ref{fact:one} refutes the case). The induction hypothesis --- the instance of
$\mathrm{W}_2$ for $t_j$, already proved --- turns $x\sq t_j$ into the disjunction
over $\hull(t_j)\subseteq\hull(t)$, completing the proof.
\end{proof}

\begin{remark}
Proposition~\ref{prop:FextendsWF} is the exact analogue of the fact that $\TT$
extends $\WT$ \cite{KM20,Dam22}. In particular, the essential undecidability of
$\Fm$ already follows from Theorem~A and Fact~\ref{fact:transfer}; Theorem~B
strengthens this to mutual interpretability with $\Qr$.
\end{remark}

\section{The Pairing Lemma and a direct interpretation of $\TT$}\label{sec:pairing}

\subsection{Ordered pairs from bare multiplicity}
In strings and sequences, order is positional; in the tree theories, it is carried
by the argument places of the pairing constructor; in adjunctive set theory,
Kuratowski pairing exploits the idempotence of set formation. In $\HFM$ none of
these devices is available. Order is instead recovered from multiplicity alone:
define the \emph{pairing term}
\[
\pr{x}{y}\ :=\ \enc{x}\U\bigl(\enc{x}\U\enc{y}\bigr),
\]
the multiset $[x,x,y]$, in which the first component is marked by multiplicity two
and the second by multiplicity one. The next two statements make this precise.

\begin{corollary}[Elements of a pair]\label{cor:elements}
$\Fm\vdash\ \pr{u}{v}=w\U\enc{c}\ \rightarrow\ (c=u\vee c=v)$.
\end{corollary}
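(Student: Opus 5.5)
This is the $k=3$ instance of the Peeling Lemma~\ref{lem:peel}, with $a_1=u$, $a_2=u$, $a_3=v$. The plan is to put the pairing term into the shape required there and read off the conclusion.

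First, the pairing term is $\pr{u}{v}=\enc{u}\U(\enc{u}\U\enc{v})$. Under the convention implicit in Lemma~\ref{lem:peel}, the iterated union $\enc{a_1}\U\dots\U\enc{a_k}$ is read up to re-bracketing by $\mathrm{F}_1$. So I will first record that $\Fm\vdash\pr{u}{v}=\enc{u}\U\enc{u}\U\enc{v}$, with whatever bracketing the proof of Lemma~\ref{lem:peel} uses. That proof peels the last factor off the left-associated union $(\enc{a_1}\U\enc{a_2})\U\enc{a_3}$, so one application of $\mathrm{F}_1$ suffices.

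Second, I assume $\pr{u}{v}=w\U\enc{c}$ and substitute the re-bracketed form. Lemma~\ref{lem:peel} with $k=3$ then yields $c=u\vee c=u\vee c=v$. Propositional simplification gives $c=u\vee c=v$, which is the claim.

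The only point needing care is the bracketing and orientation mismatch. In the proof of Lemma~\ref{lem:peel}, $\mathrm{F}_8$ is applied to an equation with $\enc{c}\U w$ on one side, while our hypothesis reads $w\U\enc{c}$. This is harmless: $\mathrm{F}_2$ converts between the two forms, and $\mathrm{F}_1$ handles the association. Since the instance $k=3$ is a single fixed derivation, nothing further is needed. Alternatively, the argument can be unwound directly: apply $\mathrm{F}_8$ to $(\enc{u}\U\enc{u})\U\enc{v}=\enc{c}\U w$, use Fact~\ref{fact:atom} with $\mathrm{F}_5$ and $\mathrm{F}_7$ on the $\enc{v}$ branch, and apply $\mathrm{F}_8$ again on the $\enc{u}\U\enc{u}$ branch.
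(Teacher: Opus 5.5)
Your proposal is correct and matches the paper's proof. Both take the $k=3$ instance of the Peeling Lemma~\ref{lem:peel} with $(a_1,a_2,a_3)=(u,u,v)$, after the single re-association $\enc{u}\U(\enc{u}\U\enc{v})=(\enc{u}\U\enc{u})\U\enc{v}$ by $\mathrm{F}_1$; your remarks on orientation via $\mathrm{F}_2$ and on collapsing the duplicate disjunct are harmless bookkeeping the paper leaves implicit.
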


\begin{proof}
This is Lemma~\ref{lem:peel} with $k=3$ and $(a_1,a_2,a_3):=(u,u,v)$, modulo the
re-association $\enc{u}\U(\enc{u}\U\enc{v})=(\enc{u}\U\enc{u})\U\enc{v}$ provided
by $\mathrm{F}_1$.
\end{proof}

\begin{lemma}[Pairing Lemma]\label{lem:pairing}
$\Fm\vdash\ \pr{x}{y}=\pr{u}{v}\ \rightarrow\ (x=u\wedge y=v)$.
\end{lemma}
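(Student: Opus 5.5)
Assume $\pr{x}{y}=\pr{u}{v}$ and work in $\Fm$. The plan is to peel elements off with Corollary~\ref{cor:elements}, cancel them with $\mathrm{F}_4$ (after reordering via $\mathrm{F}_1,\mathrm{F}_2$), and reduce everything to comparisons of singletons and two-element unions. The final comparisons are settled by Fact~\ref{fact:atom}, Lemma~\ref{lem:peel}, $\mathrm{F}_7$, and, where needed, a parity-style contradiction.

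\emph{Step 1: $x\in\{u,v\}$.} Write $\pr{x}{y}=(\enc{x}\U\enc{y})\U\enc{x}$, so $\pr{u}{v}=w\U\enc{x}$ with $w=\enc{x}\U\enc{y}$. Corollary~\ref{cor:elements} gives $x=u$ or $x=v$.

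\emph{Case $x=u$.} Cancel $\enc{x}$ twice via $\mathrm{F}_4$, after rearranging with $\mathrm{F}_1,\mathrm{F}_2$. This leaves $\enc{y}=\enc{v}$, so $y=v$ by $\mathrm{F}_7$.

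\emph{Case $x=v$.} Cancel one $\enc{x}$ to get $\enc{x}\U\enc{y}=\enc{u}\U\enc{u}$. Lemma~\ref{lem:peel} with $k=2$ and $(a_1,a_2)=(u,u)$, applied to the decomposition $\enc{x}\U\enc{y}$ written as $w\U\enc{y}$, gives $y=u$. The same lemma applied to $\enc{y}\U\enc{x}$ gives $x=u$. With $x=u$ in hand we are back in the first case, so $y=v$; since $v=x=u$, all four components coincide, and the required $x=u$, $y=v$ holds.

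So the case split closes without a genuine parity argument: in the case $x=v$, peeling alone forces $x=u$ as well. The only obstacle I foresee is bookkeeping. Each use of $\mathrm{F}_4$ needs the relevant factor moved to the right by explicit $\mathrm{F}_1,\mathrm{F}_2$ rewriting, and each appeal to Lemma~\ref{lem:peel} or Corollary~\ref{cor:elements} needs the equation in the exact shape $\ldots=w\U\enc{c}$. Both are routine.
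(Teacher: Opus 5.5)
Your proof is correct and takes essentially the paper's route: Corollary~\ref{cor:elements} gives $x=u\vee x=v$, the case $x=u$ closes by cancellation and $\mathrm{F}_7$, and the case $x=v$ reduces by one cancellation to $\enc{x}\U\enc{y}=\enc{u}\U\enc{u}$. The only difference is in how that residual equation is handled: the paper applies $\mathrm{F}_8$ and Fact~\ref{fact:atom} to it and derives a contradiction with $x\neq u$, while you apply Lemma~\ref{lem:peel} with $k=2$ twice, peeling off $\enc{y}$ and then $\enc{x}$, which gives $y=u$ and $x=u$ directly from the same axioms and is slightly tidier.
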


\begin{proof}
We reason in $\Fm$ from the hypothesis $\pr{x}{y}=\pr{u}{v}$. Throughout,
applications of $\mathrm{F}_4$ are preceded by the evident re-associations and
transpositions licensed by $\mathrm{F}_1,\mathrm{F}_2$, which we do not display.

Since $\pr{x}{y}=(\enc{x}\U\enc{y})\U\enc{x}$, the hypothesis exhibits
$\pr{u}{v}$ in the form $w\U\enc{x}$ with $w:=\enc{x}\U\enc{y}$, so
Corollary~\ref{cor:elements} yields the provable disjunction $x=u\vee x=v$. We
distinguish two cases.

\emph{Case A: $x=u$.} Substituting $u=x$ and cancelling $\enc{x}\U\enc{x}$ in one
application of $\mathrm{F}_4$ gives $\enc{y}=\enc{v}$, whence $y=v$ by
$\mathrm{F}_7$. Together with $x=u$ this is the claim.

\emph{Case B: $x\neq u$, hence $x=v$.} Substituting $v=x$ and cancelling one
occurrence of $\enc{x}$ gives
\[
\enc{x}\U\enc{y}\ =\ \enc{u}\U\enc{u}.
\]
By $\mathrm{F}_8$ there is $w'$ with $\enc{x}=\enc{u}\U w'$ or
$\enc{y}=\enc{u}\U w'$. In the first case, Fact~\ref{fact:atom} gives either
$\enc{u}=\enc{x}$, whence $x=u$ by $\mathrm{F}_7$, contradicting the case
hypothesis, or $\enc{u}=\emptyset$, contradicting $\mathrm{F}_5$. In the second
case, Fact~\ref{fact:atom} together with $\mathrm{F}_5$ gives $\enc{u}=\enc{y}$
and $w'=\emptyset$, whence $y=u$ by $\mathrm{F}_7$; substituting back yields
$\enc{x}\U\enc{u}=\enc{u}\U\enc{u}$, and cancellation gives $\enc{x}=\enc{u}$, so
$x=u$ by $\mathrm{F}_7$ --- again contradicting the case hypothesis. Case B is
therefore impossible, and the lemma follows from Case A.
\end{proof}

We record the exact bookkeeping: apart from the monoid laws
$\mathrm{F}_1$--$\mathrm{F}_3$, the proof uses $\mathrm{F}_4$, $\mathrm{F}_5$,
$\mathrm{F}_7$ and $\mathrm{F}_8$, the latter only through
Fact~\ref{fact:atom} and Corollary~\ref{cor:elements}. Every factorization
invoked has a fixed finite number of factors; no induction is required.

\subsection{The interpretation}
Let $\tau$ be the translation of $L_{\TT}$ into $L$ that assigns to the constant
$\bot$ the term $\emptyset$, to the function symbol $\langle\cdot,\cdot\rangle$
the term $\pr{x}{y}$, and to the relation symbol $\sq$ the atomic formula
$x\sq y$; equality is translated as equality and the domain is
$\delta(x):=(x=x)$, so $\tau$ is a candidate direct interpretation, and totality
and functionality of the translated function symbols are trivial.

\begin{theorem}\label{thm:TinF}
$\Fm$ proves the $\tau$-translations of the axioms
$\TT_1$--$\TT_4$. Hence $\tau$ is a direct interpretation of $\TT$ in $\Fm$, and
$\TT\intp\Fm$.
\end{theorem}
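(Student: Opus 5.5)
The plan is to verify the four translated axioms one at a time. Since $\tau$ is direct, each translation is the original axiom with $\bot$ replaced by $\emptyset$ and $\langle x,y\rangle$ replaced by $\pr{x}{y}$. No relativization or functionality conditions need checking: a direct interpretation translates function symbols by terms, so totality and functionality are automatic.

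\emph{$\TT_1$ and $\TT_2$.} The translation of $\TT_1$ is $\pr{x}{y}\neq\emptyset$. Since $\pr{x}{y}=\enc{x}\U(\enc{x}\U\enc{y})$ has the form $\enc{x}\U w$, this is exactly Fact~\ref{fact:one}. The translation of $\TT_2$ is verbatim the Pairing Lemma~\ref{lem:pairing}.

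\emph{$\TT_3$.} The translation is $x\sq\emptyset\leftrightarrow x=\emptyset$. For ($\Leftarrow$), take the first disjunct of $\mathrm{F}_9$ with $t:=\emptyset$. For ($\Rightarrow$), $\mathrm{F}_9$ gives either $x=\emptyset$, or $w,u$ with $\emptyset=w\U\enc{u}$. In the latter case $\mathrm{F}_2$ yields $\enc{u}\U w=\emptyset$, which contradicts Fact~\ref{fact:one}.

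\emph{$\TT_4$.} The translation is
\[
x\sq\pr{y}{z}\ \leftrightarrow\ x=\pr{y}{z}\vee x\sq y\vee x\sq z .
\]
For ($\Rightarrow$), apply $\mathrm{F}_9$ with $t:=\pr{y}{z}$. Either $x=\pr{y}{z}$, or there are $w,u$ with $\pr{y}{z}=w\U\enc{u}$ and $x\sq u$. In the second case Corollary~\ref{cor:elements} gives $u=y\vee u=z$, and substitution yields $x\sq y$ or $x\sq z$.

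For ($\Leftarrow$), the case $x=\pr{y}{z}$ is the first disjunct of $\mathrm{F}_9$. For $x\sq y$, exhibit the explicit decomposition $\pr{y}{z}=(\enc{y}\U\enc{z})\U\enc{y}$ using $\mathrm{F}_1,\mathrm{F}_2$, and feed the witnesses $w:=\enc{y}\U\enc{z}$, $u:=y$ into the right-to-left direction of $\mathrm{F}_9$. For $x\sq z$, use the decomposition $\pr{y}{z}=(\enc{y}\U\enc{y})\U\enc{z}$, which follows from $\mathrm{F}_1$, with $u:=z$.

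\emph{Main obstacle.} I expect essentially none: the substantive work, injectivity of $\pi$, is already done in the Pairing Lemma. The only point needing care is the ($\Rightarrow$) direction of $\TT_4$. There one must ensure that an arbitrary decomposition $\pr{y}{z}=w\U\enc{u}$ forces $u\in\{y,z\}$, and this is exactly what Corollary~\ref{cor:elements}, built from the Peeling Lemma and hence from $\mathrm{F}_8$, supplies.

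Having established all four translations, $\tau$ is a direct interpretation of $\TT$ in $\Fm$, so $\TT\intp\Fm$.
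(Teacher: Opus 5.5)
Your proposal is correct and follows the paper's proof essentially step for step: Fact~\ref{fact:one} handles $\TT_1$ and the Pairing Lemma handles $\TT_2$. For $\TT_3$ you use $\mathrm{F}_9$ at $t:=\emptyset$ and refute the existential disjunct via $\mathrm{F}_2$ and Fact~\ref{fact:one}. For $\TT_4$ you use Corollary~\ref{cor:elements} in the forward direction and the same explicit witnesses $w:=\enc{y}\U\enc{z}$ and $w:=\enc{y}\U\enc{y}$ in the backward direction.
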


\begin{proof}
Four verifications.

$(\TT_1)^{\tau}$: $\forall x,y\;\neg(\pr{x}{y}=\emptyset)$. By $\mathrm{F}_1$,
$\pr{x}{y}=\enc{x}\U(\enc{x}\U\enc{y})$, and Fact~\ref{fact:one} gives
$\enc{x}\U w\neq\emptyset$.

$(\TT_2)^{\tau}$: $\forall x,y,z,w\;(\pr{x}{y}=\pr{z}{w}\rightarrow
x=z\wedge y=w)$. This is Lemma~\ref{lem:pairing}.

$(\TT_3)^{\tau}$: $\forall x\;(x\sq\emptyset\leftrightarrow x=\emptyset)$.
Instantiating $\mathrm{F}_9$ at $t:=\emptyset$ gives
\[
x\sq\emptyset\ \leftrightarrow\ x=\emptyset\ \vee\
\exists w\,\exists u\,(\emptyset=w\U\enc{u}\wedge x\sq u).
\]
The existential disjunct is refuted outright: from $\emptyset=w\U\enc{u}$ and
$\mathrm{F}_2$ we get $\enc{u}\U w=\emptyset$, contradicting
Fact~\ref{fact:one}. The equivalence therefore reduces to
$x\sq\emptyset\leftrightarrow x=\emptyset$.

$(\TT_4)^{\tau}$: $\forall x,u,v\;\bigl(x\sq\pr{u}{v}\leftrightarrow
x=\pr{u}{v}\vee x\sq u\vee x\sq v\bigr)$.

($\Rightarrow$) Suppose $x\sq\pr{u}{v}$. By $\mathrm{F}_9$, either
$x=\pr{u}{v}$, or there are $w,a$ with $\pr{u}{v}=w\U\enc{a}$ and $x\sq a$. In
the latter case Corollary~\ref{cor:elements} yields the provable disjunction
$a=u\vee a=v$; substituting equals in $x\sq a$ gives $x\sq u\vee x\sq v$. No
uniqueness of the witness $w$ is required.

($\Leftarrow$) If $x=\pr{u}{v}$, the left disjunct of $\mathrm{F}_9$ gives
$x\sq\pr{u}{v}$. If $x\sq u$, take $a:=u$ and $w:=\enc{u}\U\enc{v}$: by
$\mathrm{F}_1,\mathrm{F}_2$, $\pr{u}{v}=(\enc{u}\U\enc{v})\U\enc{u}=w\U\enc{a}$,
so the right disjunct of $\mathrm{F}_9$ gives $x\sq\pr{u}{v}$. If $x\sq v$, take
$a:=v$ and $w:=\enc{u}\U\enc{u}$: by $\mathrm{F}_1$,
$\pr{u}{v}=(\enc{u}\U\enc{u})\U\enc{v}=w\U\enc{a}$, and $\mathrm{F}_9$ applies as
before.
\end{proof}

\begin{corollary}\label{cor:QinF}
$\Qr\intp\Fm$. Consequently, $\Fm$ is essentially undecidable.
\end{corollary}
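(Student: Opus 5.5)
The plan is to obtain the interpretation by composition and then transfer essential undecidability. Theorem~\ref{thm:QT} gives $\Qr\intp\TT$, and Theorem~\ref{thm:TinF} gives the direct interpretation $\tau$ of $\TT$ in $\Fm$. Interpretability is transitive; moreover, direct interpretations compose with arbitrary ones. Composing the interpretation of $\Qr$ in $\TT$ with $\tau$ therefore yields $\Qr\intp\Fm$.

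Concretely, each $L_{\TT}$-formula used by the interpretation of $\Qr$ in $\TT$ is translated by $\tau$ into an $L$-formula. Because $\tau$ has trivial domain, translates equality as equality, and translates function symbols by terms, it commutes with substitution and with quantification. Hence every $\TT$-provable formula $\psi$ satisfies $\Fm\vdash\psi^{\tau}$. This follows from Theorem~\ref{thm:TinF} together with the soundness of relative interpretation for first-order derivations. Totality and functionality of the translated arithmetic symbols, together with nonemptiness of the translated domain, are $\TT$-theorems, so they carry over to $\Fm$.

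For the second claim, I will invoke Fact~\ref{fact:transfer}. This requires $\Qr$ to be essentially undecidable, which holds by \cite{TMR53}, and requires $\Fm$ to be consistent. Consistency follows from Proposition~\ref{prop:soundF}, since $\mm\models\Fm$. Fact~\ref{fact:transfer} then makes $\Fm$ essentially undecidable.

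There is no genuine obstacle here. The only point needing care is that composition preserves the requirement $U\vdash\exists x\,\delta(x)$ and the functionality conditions. For a direct outer interpretation this holds because $\tau$ maps $\TT$-theorems to $\Fm$-theorems. Alternatively, Fact~\ref{fact:transfer} can be applied twice, first to $\Qr\intp\TT$ and then to $\TT\intp\Fm$, which avoids composition for the undecidability part altogether.
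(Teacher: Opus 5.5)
Your proposal is correct and matches the paper's proof: compose $\Qr\intp\TT$ (Theorem~\ref{thm:QT}) with the direct interpretation of Theorem~\ref{thm:TinF} by transitivity, then apply Fact~\ref{fact:transfer} using the essential undecidability of $\Qr$ and the consistency of $\Fm$ from Proposition~\ref{prop:soundF}. Your alternative of applying Fact~\ref{fact:transfer} twice would additionally need $\TT$ to be consistent; this holds via its term model, or because an inconsistent $\TT$ would make $\Fm$ inconsistent through $\TT\intp\Fm$.
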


\begin{proof}
By Theorem~\ref{thm:QT}, $\Qr\intp\TT$; by Theorem~\ref{thm:TinF},
$\TT\intp\Fm$; transitivity of interpretability gives $\Qr\intp\Fm$. Since
$\Qr$ is essentially undecidable \cite{TMR53} and $\Fm$ is consistent
(Proposition~\ref{prop:soundF}), Fact~\ref{fact:transfer} applies.
\end{proof}

\section{Theorem A: $\WFm$ is mutually interpretable with $\Rr$}\label{sec:thmA}

The two directions are proved separately: $\Rr\intp\WFm$ by composing a
schema-to-schema interpretation of $\WT$ in $\WFm$ with
Theorem~\ref{thm:WTR}, and $\WFm\intp\Rr$ by Visser's criterion
(Theorem~\ref{thm:visser}), using finite truncations of the intended model.

\subsection{Coding trees as multisets}
Define the coding $c$ from variable-free $L_{\TT}$-terms to closed $L$-terms by
\[
c(\bot):=\emptyset,\qquad
c(\langle s,t\rangle):=\pr{c(s)}{c(t)}.
\]
Thus $c(r)$ is precisely the $\tau$-translation of the closed term $r$ under the
translation $\tau$ of Section~\ref{sec:pairing}.

\begin{lemma}[Coding Lemma]\label{lem:coding}
The evaluation of codes in $\mm$ is injective: for distinct variable-free
$L_{\TT}$-terms $s,t$ we have $\mm\not\models c(s)=c(t)$, and hence
$\nf(c(s))\not\equiv\nf(c(t))$.
\end{lemma}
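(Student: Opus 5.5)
We prove injectivity of evaluation by structural induction on variable-free $L_{\TT}$-terms, working directly in the intended model $\mm$. By Proposition~\ref{prop:soundF}, $\mm\models\Fm$, so every consequence of $\Fm$ holds in $\mm$. In particular the Pairing Lemma~\ref{lem:pairing} and Fact~\ref{fact:one} hold semantically, and no further syntactic work is needed. Alternatively one could argue purely in the free commutative monoid, but importing the already-proved lemmas through soundness is cleaner.

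The claim to establish is: for all variable-free $s,t$, if $\mm\models c(s)=c(t)$ then $s\equiv t$. We proceed by induction on the sum of the sizes of $s$ and $t$, and split into cases by the outer constructors.
\begin{itemize}
\item If both $s$ and $t$ are $\bot$, there is nothing to prove.
\item If exactly one of them is $\bot$, say $s\equiv\bot$ and $t\equiv\langle t_1,t_2\rangle$, then $c(s)=\emptyset$ while $c(t)=\enc{c(t_1)}\U(\enc{c(t_1)}\U\enc{c(t_2)})$. This is nonempty by Fact~\ref{fact:one}, interpreted in $\mm$, or simply because its value has three elements.
\item If $s\equiv\langle s_1,s_2\rangle$ and $t\equiv\langle t_1,t_2\rangle$, the Pairing Lemma in $\mm$ turns $\mm\models\pr{c(s_1)}{c(s_2)}=\pr{c(t_1)}{c(t_2)}$ into $\mm\models c(s_1)=c(t_1)$ and $\mm\models c(s_2)=c(t_2)$. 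The induction hypothesis then gives $s_1\equiv t_1$ and $s_2\equiv t_2$, hence $s\equiv t$.
\end{itemize}
Taking the contrapositive gives the first assertion: distinct terms have distinct values.

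The second assertion follows at once from the Normal Form Lemma~\ref{lem:nf}, direction (ii)$\Rightarrow$(i). If $\nf(c(s))\equiv\nf(c(t))$, then $\mm\models c(s)=c(t)$, contradicting what was just shown.

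There is no real obstacle here. The only point needing care is to invoke the Pairing Lemma semantically via soundness rather than re-deriving injectivity of $[x,x,y]$ by hand in $\HFM$. A direct argument is also easy: in $[x,x,y]$ with $x\neq y$, $x$ is the unique element of multiplicity two, and if $x=y$ the multiset is $[x,x,x]$, so the multiplicity pattern recovers both components.
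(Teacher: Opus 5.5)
Your proof is correct, and its skeleton matches the paper's: induction on the terms, a case split on outermost constructors, then Lemma~\ref{lem:nf} for the normal-form clause. The difference is in how injectivity of $\pr{x}{y}$ in $\mm$ is justified.

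\begin{itemize}
\item The paper argues directly in $\HFM$. It compares $[a,a,b]$ with $[a',a',b']$ by multiplicities, distinguishing the pattern with all three entries equal from the $2{+}1$ pattern.
\item You transfer the syntactic Pairing Lemma~\ref{lem:pairing} to $\mm$ via soundness (Proposition~\ref{prop:soundF}).
\end{itemize}

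Your route is legitimate and not circular. The Pairing Lemma and soundness are both established before, and independently of, the Coding Lemma. Your route also saves a fresh case analysis.

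It is not more elementary, though. Soundness of $\mathrm{F}_4$, $\mathrm{F}_7$ and $\mathrm{F}_8$ in $\mm$ itself rests on unique factorization in the free commutative monoid. So you route a purely semantic fact about $\HFM$ through the $\Fm$-level machinery of Section~\ref{sec:pairing}, only to land back on the same foundation. This makes Section~\ref{sec:thmA}, which concerns the cancellation-free $\WFm$, depend on that machinery. The dependence is harmless, since the lemma is a statement about $\mm$ and not about what $\WFm$ proves, but it is avoidable.

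The paper's direct count keeps Section~\ref{sec:thmA} self-contained and shows exactly how multiplicity encodes order. Your closing remark already sketches precisely that argument, so either version would serve.
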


\begin{proof}
By induction on the maximum of the heights of $s,t$; the final claim then follows
from Lemma~\ref{lem:nf}. Write $\mathrm{val}(\cdot)$ for evaluation in $\mm$.
First, $\mathrm{val}(c(\langle s,t\rangle))$ is a nonempty multiset while
$\mathrm{val}(c(\bot))=\emptyset$, so terms of different outermost shape have
distinct values. Suppose now
$\mathrm{val}(c(\langle s_1,t_1\rangle))=\mathrm{val}(c(\langle s_2,t_2\rangle))$,
i.e.\ $[a,a,b]=[a',a',b']$ as multisets, where $a:=\mathrm{val}(c(s_1))$,
$b:=\mathrm{val}(c(t_1))$, $a':=\mathrm{val}(c(s_2))$, $b':=\mathrm{val}(c(t_2))$.
If $a=b$, the left-hand side is $[a,a,a]$, so every element of the right-hand
side equals $a$; in particular $a'=a$ and $b'=a=b$. If $a\neq b$, then the
left-hand side has exactly two distinct elements, with multiplicities $2$ and
$1$; hence $a'\neq b'$ (otherwise the right-hand side would be $[a',a',a']$), and
matching multiplicities forces $a=a'$ and $b=b'$. In either case
$\mathrm{val}(c(s_1))=\mathrm{val}(c(s_2))$ and
$\mathrm{val}(c(t_1))=\mathrm{val}(c(t_2))$, so the induction hypothesis yields
$s_1=s_2$ and $t_1=t_2$.
\end{proof}

\begin{lemma}[Hull Lemma]\label{lem:hullcode}
For every variable-free $L_{\TT}$-term $t$,
\[
\hull(c(t))\ =\ \{\nf(c(s)):s\in S(t)\},
\]
and the map $s\mapsto\nf(c(s))$ is a bijection from $S(t)$ onto $\hull(c(t))$.
\end{lemma}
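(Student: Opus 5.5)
We argue by structural induction on $t$, working entirely in the intended model $\mm$ and using Lemma~\ref{lem:nf} to pass between values and normal forms. Recall that $\hull(r)$ is the set of normal terms $s$ with $\mm\models s\sq r$. By Lemma~\ref{lem:nf}, this is in bijection with the set of values $\{m\in\HFM : m\sq \mathrm{val}(r)\}$ via $m\mapsto$ the unique normal term denoting $m$. It therefore suffices to prove the value-level identity
\[
\{m : m\sq \mathrm{val}(c(t))\}\ =\ \{\mathrm{val}(c(s)) : s\in S(t)\},
\]
and then apply $\nf$ to both sides. Here $\nf(c(s))$ is the normal term denoting $\mathrm{val}(c(s))$, again by Lemma~\ref{lem:nf}.

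\emph{Base case $t=\bot$.} Here $\mathrm{val}(c(\bot))=\emptyset$. Since $\emptyset$ has no immediate members, the recursive definition of $\sq$ gives $m\sq\emptyset$ iff $m=\emptyset$. This matches $S(\bot)=\{\bot\}$.

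\emph{Inductive step $t=\langle t_1,t_2\rangle$.} Let $a=\mathrm{val}(c(t_1))$ and $b=\mathrm{val}(c(t_2))$, so that $\mathrm{val}(c(t))=[a,a,b]$. Its immediate members (in the sense of $\in_1$) are exactly $a$ and $b$, because $[a,a,b]=w\U\enc{u}$ forces $u\in\{a,b\}$ by unique factorization, and conversely both are realized. By the definition of $\sq$,
\[
m\sq[a,a,b]\iff m=[a,a,b]\ \vee\ m\sq a\ \vee\ m\sq b .
\]
The induction hypothesis identifies the last two sets with the code values of $S(t_1)$ and $S(t_2)$. Since $S(t)=\{t\}\cup S(t_1)\cup S(t_2)$, the value-level identity follows.

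\emph{Bijectivity.} The map $s\mapsto\nf(c(s))$ lands in $\hull(c(t))$ and is onto by the identity just proved. Injectivity on $S(t)$, indeed on all variable-free terms, is exactly the Coding Lemma~\ref{lem:coding}: distinct $s$ have $\nf(c(s))$ pairwise non-identical.

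The only point requiring care is the identification of the immediate members of $[a,a,b]$, including the degenerate case $a=b$. This uses only freeness of $(\HFM,\U,\emptyset)$, which the paper records in Section~\ref{sec:theories}, so I expect no real obstacle. The rest is bookkeeping between values and normal terms via Lemma~\ref{lem:nf}.
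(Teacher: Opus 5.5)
Your proof is correct and follows the paper's own argument essentially step for step. Both work by structural induction on $t$ at the level of values in $\mm$: the base case uses that $\emptyset$ has no immediate members, the inductive step uses that the immediate members of $[a,a,b]$ are exactly $a$ and $b$ together with $S(\langle t_1,t_2\rangle)=\{\langle t_1,t_2\rangle\}\cup S(t_1)\cup S(t_2)$, and injectivity is delegated to the Coding Lemma~\ref{lem:coding} (your passage from values to normal terms tacitly uses that every element of $\HFM$ is denoted by some closed term, which is evident by rank induction and equally tacit in the paper).
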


\begin{proof}
Injectivity of $s\mapsto\nf(c(s))$ is Lemma~\ref{lem:coding}; it remains to
compute the hull, by induction on $t$. For $t=\bot$: no $m\in\HFM$ satisfies
$\emptyset=w\U\enc{u}$, so $\emptyset$ has no immediate members and
$x\sq\emptyset$ holds only for $x=\emptyset$; thus
$\hull(\emptyset)=\{\emptyset\}=\{\nf(c(\bot))\}$. For $t=\langle t_1,t_2\rangle$:
the value of $c(t)$ is $[a,a,b]$ with $a:=\mathrm{val}(c(t_1))$,
$b:=\mathrm{val}(c(t_2))$, whose immediate members are exactly the values $a$ and
$b$ (multiplicity does not affect immediate membership). By the recursive
definition of $\sq$, $x\sq\mathrm{val}(c(t))$ iff $x=\mathrm{val}(c(t))$ or
$x\sq a$ or $x\sq b$; by the induction hypothesis the latter two cases range
exactly over the values of the codes of $S(t_1)$ and $S(t_2)$. Since
$S(\langle t_1,t_2\rangle)=\{\langle t_1,t_2\rangle\}\cup S(t_1)\cup S(t_2)$, the
claim follows.
\end{proof}

\subsection{The schema-to-schema interpretation}

\begin{proposition}\label{prop:WTinWF}
The translation $\tau$ of Section~\ref{sec:pairing} is a direct interpretation of
$\WT$ in $\WFm$; hence $\WT\intp\WFm$.
\end{proposition}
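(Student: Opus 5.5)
The plan is to verify that $\WFm$ proves the $\tau$-translation of each instance of the two schemes of $\WT$. Since $\tau$ is direct, it is identical on equality, unrelativized, and sends $\bot\mapsto\emptyset$ and $\langle\cdot,\cdot\rangle\mapsto\pr{\cdot}{\cdot}$. Consequently the translation of a variable-free $L_{\TT}$-term $r$ is literally the closed $L$-term $c(r)$, as noted after the definition of $c$. Totality and functionality of the translated function symbols are trivial, because they are terms. So it remains to handle $\mathrm{WT}_1$ and $\mathrm{WT}_2$ instance by instance.

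For $\mathrm{WT}_1$, take distinct variable-free terms $s,t$. The translated instance is $\neg(c(s)=c(t))$. By the Coding Lemma~\ref{lem:coding}, $\nf(c(s))\not\equiv\nf(c(t))$. Hence $\neg(c(s)=c(t))$ is verbatim an instance of scheme $\mathrm{W}_1$ of $\WFm$, and nothing further is needed.

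For $\mathrm{WT}_2$, fix a variable-free $t$. The translated instance is
\[
\forall x\,\bigl(x\sq c(t)\leftrightarrow\textstyle\bigvee_{s\in S(t)}x=c(s)\bigr).
\]
Scheme $\mathrm{W}_2$ applied to the closed term $c(t)$ gives
\[
\forall x\,\bigl(x\sq c(t)\leftrightarrow\textstyle\bigvee_{r\in\hull(c(t))}x=r\bigr).
\]
By the Hull Lemma~\ref{lem:hullcode}, $\hull(c(t))=\{\nf(c(s)):s\in S(t)\}$. The two disjunctions therefore match term by term once each disjunct $x=\nf(c(s))$ is replaced by $x=c(s)$. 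This replacement is justified because $\mathrm{F}_1$--$\mathrm{F}_3\vdash c(s)=\nf(c(s))$ by Lemma~\ref{lem:nf} (iii), and these axioms belong to $\WFm$. Finitely many such equalities, combined by equality logic, convert the $\mathrm{W}_2$ instance into the translated $\mathrm{WT}_2$ instance. Repetitions among disjuncts are harmless in either direction, although the bijection clause of the Hull Lemma shows there are none.

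The only step with real content is this alignment of the two disjunction index sets: $\hull$ ranges over normal terms, whereas $S(t)$ ranges over trees coded by possibly non-normal terms. The Hull Lemma together with the provable normal-form equalities bridges exactly this gap. Having checked both schemes, $\tau$ is a direct interpretation, and $\WT\intp\WFm$ follows.
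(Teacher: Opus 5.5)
Your proposal is correct and follows essentially the same route as the paper's proof: $\mathrm{WT}_1$ instances translate verbatim into $\mathrm{W}_1$ instances via the Coding Lemma. For $\mathrm{WT}_2$, the $\mathrm{W}_2$ instance for $c(t)$ is aligned with the translated instance using the Hull Lemma and the provable equalities $c(s)=\nf(c(s))$ from Lemma~\ref{lem:nf}(iii).
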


\begin{proof}
Totality and functionality are trivial as before, since function symbols are
translated by terms. It remains to show that $\WFm$ proves the translation of
every instance of the two schemes.

\emph{Scheme $\mathrm{WT}_1$.} An instance is $\neg(s=t)$ for distinct
variable-free $L_{\TT}$-terms; its translation is $\neg(c(s)=c(t))$. By
Lemma~\ref{lem:coding}, $\nf(c(s))\not\equiv\nf(c(t))$, so $\neg(c(s)=c(t))$ is
an instance of $\mathrm{W}_1$, hence an axiom of $\WFm$.

\emph{Scheme $\mathrm{WT}_2$.} An instance is
$\forall x\,(x\sq t\leftrightarrow\bigvee_{s\in S(t)}x=s)$ for a variable-free
$t$; its translation is
$\forall x\,(x\sq c(t)\leftrightarrow\bigvee_{s\in S(t)}x=c(s))$. The instance of
$\mathrm{W}_2$ for the closed term $c(t)$ reads
$\forall x\,(x\sq c(t)\leftrightarrow\bigvee_{r\in\hull(c(t))}x=r)$. By
Lemma~\ref{lem:hullcode}, $r$ ranges exactly over the terms $\nf(c(s))$ for
$s\in S(t)$, and by Lemma~\ref{lem:nf}(iii),
$\mathrm{F}_1\text{--}\mathrm{F}_3\vdash c(s)=\nf(c(s))$ for each $s$. The two
disjunctions are therefore provably equivalent in $\WFm$, and the translated
instance follows.
\end{proof}

\begin{corollary}\label{cor:RinWF}
$\Rr\intp\WFm$.
\end{corollary}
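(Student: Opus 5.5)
The plan is to obtain $\Rr\intp\WFm$ by composing two interpretations that are already available. By Theorem~\ref{thm:WTR}, $\WT\mutint\Rr$, and in particular $\Rr\intp\WT$. By Proposition~\ref{prop:WTinWF}, the translation $\tau$ is a direct interpretation of $\WT$ in $\WFm$, so $\WT\intp\WFm$. Transitivity of interpretability then gives $\Rr\intp\WFm$.

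The only point that needs a check is that the composition is legitimate. Two small observations settle it. First, $\WFm$ is a theory in the paper's sense: it is recursively axiomatized, as noted after the definition of the schemes $\mathrm{W}_1,\mathrm{W}_2$. It is also consistent, since $\mm\models\WFm$. Second, the interpretation of $\Rr$ in $\WT$ is an arbitrary relative interpretation. The paper states that direct interpretations compose with arbitrary ones, so we may compose $\tau$ after it. Concretely, the composite translation has the following three components.
\begin{itemize}
\item Its domain formula is the $\tau$-translate of the domain formula of the $\WT$-interpretation of $\Rr$.
\item Its relation and function clauses are the $\tau$-translates of the corresponding clauses.
\item Totality and functionality transfer, because $\WFm$ proves the $\tau$-translate of every theorem of $\WT$.
\end{itemize}

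I do not expect any real obstacle here. All the substantive work (the Coding Lemma and the Hull Lemma, which feed into Proposition~\ref{prop:WTinWF}) has already been done. The one subtlety is that $\WT$ is schematic. Interpretability of $\Rr$ in $\WT$ then means that each axiom of $\Rr$ translates to a theorem of $\WT$, and each such theorem uses only finitely many $\WT$-instances. Each of those instances is sent by $\tau$ to a theorem of $\WFm$, as Proposition~\ref{prop:WTinWF} shows instance by instance. So every translated axiom of $\Rr$ is provable in $\WFm$, and the corollary follows.
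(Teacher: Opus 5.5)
Your proposal is correct and matches the paper's proof: compose $\Rr\intp\WT$ from Theorem~\ref{thm:WTR} with the direct interpretation $\WT\intp\WFm$ of Proposition~\ref{prop:WTinWF}, then apply transitivity. The extra checks you include, recursive axiomatization and consistency of $\WFm$, are harmless but not needed for the composition itself.
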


\begin{proof}
$\Rr\intp\WT$ by Theorem~\ref{thm:WTR}, and $\WT\intp\WFm$ by
Proposition~\ref{prop:WTinWF}; compose.
\end{proof}

\subsection{Local finite satisfiability}
Define the \emph{weight} of $m\in\HFM$ by rank recursion:
$W(\emptyset):=0$ and $W([m_1,\dots,m_k]):=k+\sum_{i\le k}W(m_i)$, elements
listed with multiplicity. Correspondingly, for closed $L$-terms put
$w(\emptyset):=0$, $w(\enc{t}):=w(t)+1$, $w(s\U t):=w(s)+w(t)$; a straightforward
induction gives $W(\mathrm{val}(t))=w(t)$, and $w(t')\le w(t)$ whenever $t'$ is a
subterm of $t$. Moreover, if $x\sq m$ then $W(x)\le W(m)$: for $x\neq m$ there
is $u\in_1 m$ with $x\sq u$, and $m=w\U\enc{u}$ gives $W(u)<W(m)$, so the claim
follows by rank induction. Finally, for each $B$ the set
$\{m\in\HFM:W(m)\le B\}$ is finite, by induction on $B$: such an $m$ has at most
$B$ top-level members, each of weight $<B$.

\begin{lemma}\label{lem:MB}
Every finite subset $\Sigma_0$ of the axioms of $\WFm$ has a finite model.
\end{lemma}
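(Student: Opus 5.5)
Fix $\Sigma_0$ and choose a bound $B$ exceeding the weight $w(t)$ of every closed term $t$ occurring in $\Sigma_0$: in the $\mathrm{W}_1$ instances $\neg(s=t)$, in the $\mathrm{W}_2$ instances $\forall x\,(x\sq t\leftrightarrow\dots)$, and in the terms of $\hull(t)$. The last is covered automatically, since $x\sq m$ implies $W(x)\le W(m)$. The model $\mm_B$ has universe $\{m\in\HFM:W(m)\le B\}\cup\{\infty\}$, where $\infty$ is a fresh absorbing element; this universe is finite by the observation preceding the lemma. The operations are truncated as follows. Put $\emptyset^{\mm_B}:=\emptyset$. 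Let $\enc{m}^{\mm_B}:=\enc{m}$ if $W(m)+1\le B$, and $\infty$ otherwise, with $\enc{\infty}:=\infty$. Let $m\U n:=m\U n$ if $W(m)+W(n)\le B$, and $\infty$ otherwise, with $\infty\U x=x\U\infty=\infty$. Containment on true elements is the restriction of $\sq^{\mm}$. For $\infty$, set $x\sq\infty$ for all $x$, and $\infty\sq m$ for no true $m$.

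First we check the monoid axioms $\mathrm{F}_1$--$\mathrm{F}_3$. Weight is additive under $\U$, so both bracketings of $x\U y\U z$ agree: each is the true union when the total weight is $\le B$, and $\infty$ otherwise. Absorption handles the cases involving $\infty$. Commutativity and neutrality are immediate since $W(\emptyset)=0$.

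Next comes the key evaluation claim: for any closed term $t$ with $w(t)\le B$, the value of $t$ in $\mm_B$ equals $\mathrm{val}(t)$. The proof is by induction on $t$, using the fact that subterms have weight $\le w(t)$ and that $W(\mathrm{val}(t))=w(t)$, so no truncation is ever triggered. Consequently each $\mathrm{W}_1$ instance in $\Sigma_0$ holds. By the choice of $B$ both sides evaluate truly, and $\nf(s)\not\equiv\nf(t)$ gives $\mm\not\models s=t$ by Lemma~\ref{lem:nf}.

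For a $\mathrm{W}_2$ instance with $t$ in $\Sigma_0$, the value $\mathrm{val}(t)$ is a true element. Its $\sq$-predecessors in $\mm_B$ are therefore exactly the true $x$ with $x\sq^{\mm}\mathrm{val}(t)$, because $\infty\not\sq\mathrm{val}(t)$. Each such $x$ has weight $\le B$, so these are exactly the values of the terms in $\hull(t)$, and each of those terms evaluates correctly by the evaluation claim.

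The main obstacle is choosing the behaviour of $\infty$ with respect to $\sq$ and $\U$ so that no instance breaks. Since $\WFm$ contains no universal axioms beyond $\mathrm{F}_1$--$\mathrm{F}_3$ and the closed-term schemes, the only constraints are the following two. First, $\infty$ must not sit below any evaluated closed term in $\Sigma_0$; this holds because those terms evaluate to true elements. Second, associativity must hold in the mixed cases, which absorption settles. I would therefore verify associativity carefully through case analysis on which partial sums overflow, relying on additivity of $W$. The absence of cancellation $\mathrm{F}_4$ from $\WFm$ is exactly what permits the absorbing element.
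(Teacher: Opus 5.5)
Your proposal is correct and follows the paper's proof essentially verbatim: the same weight-truncated structure on $\{m\in\HFM:W(m)\le B\}\cup\{\infty\}$ with $\infty$ absorbing, the same weight-additivity check of $\mathrm{F}_1$--$\mathrm{F}_3$, and the same no-truncation evaluation argument for the $\mathrm{W}_1$ and $\mathrm{W}_2$ instances. The one deviation is that you declare $x\sq\infty$ for all $x$, whereas the paper leaves $\infty$ unrelated to everything; this is harmless, because every $\mathrm{W}_2$ instance in $\Sigma_0$ has a closed term on the right of $\sq$, and that term evaluates to a standard element.
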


\begin{proof}
Enlarging $\Sigma_0$, we may assume it contains
$\mathrm{F}_1,\mathrm{F}_2,\mathrm{F}_3$ together with finitely many instances
of $\mathrm{W}_1$ and $\mathrm{W}_2$. Let $B\ge 1$ bound the weights $w(t)$ of
all closed terms occurring in $\Sigma_0$. Define the finite structure $M_B$:
universe $\{m\in\HFM:W(m)\le B\}\cup\{\infty\}$, where $\infty$ is a fresh
object; operations
\[
\begin{aligned}
\enc{x}^{M_B}&:=\begin{cases}\enc{x}&\text{if }x\text{ standard, }W(x)+1\le B,\\
\infty&\text{otherwise;}\end{cases}\\[3pt]
x\U^{M_B}y&:=\begin{cases}x\U y&\text{if }x,y\text{ standard, }W(x)+W(y)\le B,\\
\infty&\text{otherwise.}\end{cases}
\end{aligned}
\]
$\emptyset^{M_B}:=\emptyset$; and $\sq^{M_B}$ is the restriction of $\sq$ to the
standard part, with $\infty$ unrelated to anything.

\emph{$\mathrm{F}_1$}: if $x,y,z$ are standard and $W(x)+W(y)+W(z)\le B$, both
sides evaluate to the true $x\U y\U z$, since every partial sum is bounded by the
total; otherwise both sides evaluate to $\infty$ --- if, say,
$W(x)+W(y)>B$, then the left side is $\infty\U^{M_B}z=\infty$, while on the
right either $y\U^{M_B}z=\infty$ already, or the outer union has total weight
$>B$. \emph{$\mathrm{F}_2$}: the definition is symmetric.
\emph{$\mathrm{F}_3$}: $W(x)+W(\emptyset)=W(x)\le B$ for standard $x$, and
$\infty\U^{M_B}\emptyset=\infty$.

\emph{Instances of $\mathrm{W}_1$ in $\Sigma_0$}: for a closed term $t$ occurring
in $\Sigma_0$, every subterm $t'$ has $w(t')\le w(t)\le B$, so the evaluation of
$t$ in $M_B$ never truncates and coincides with its evaluation in $\mm$. Distinct
normal forms thus receive distinct values by Lemma~\ref{lem:nf}.

\emph{Instances of $\mathrm{W}_2$ in $\Sigma_0$}: fix such an instance, for the
closed term $t$. Every $s\in\hull(t)$ satisfies
$W(\mathrm{val}(s))\le W(\mathrm{val}(t))\le B$, so all terms occurring in the
instance evaluate standardly, as above. For standard $x$: $x\sq^{M_B}
\mathrm{val}(t)$ iff $x\sq\mathrm{val}(t)$ in $\mm$, iff $x$ is the value of some
$s\in\hull(t)$, by the definition of $\hull$ and the bijectivity of evaluation on
normal terms (Lemma~\ref{lem:nf}). For $x=\infty$: the left side is false by
definition of $\sq^{M_B}$, and each disjunct $\infty=\mathrm{val}(s)$ is false
since $\mathrm{val}(s)$ is standard.
\end{proof}

\begin{remark}
In $M_B$ cancellation fails for $B\ge 2$: for $z$ standard of weight $B$ and distinct
standard $x,y$ of positive weight, $x\U^{M_B}z=\infty=y\U^{M_B}z$. The omission
of $\mathrm{F}_4$ from $\WFm$ is thus essential to this construction, as
announced in Section~\ref{sec:theories}.
\end{remark}

\begin{proposition}\label{prop:WFinR}
$\WFm\intp\Rr$.
\end{proposition}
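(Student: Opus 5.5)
The plan is to apply Visser's criterion (Theorem~\ref{thm:visser}) directly. That theorem requires two things of $\WFm$: it must be recursively axiomatized, and it must be locally finitely satisfiable. Both have essentially been established already, so the proof reduces to checking that the hypotheses are met in the exact form the theorem demands.

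First, recursive axiomatization. $\WFm$ consists of $\mathrm{F}_1$--$\mathrm{F}_3$ together with the instances of $\mathrm{W}_1$ and $\mathrm{W}_2$. Membership of a sentence in this set is decidable for three reasons:
\begin{itemize}
\item One can recognise syntactically whether a sentence has the form $\neg(s=t)$ for closed terms $s,t$, or the form $\forall x\,(x\sq t\leftrightarrow\bigvee_{s\in S}x=s)$ for a closed $t$ and a finite list $S$.
\item The function $\nf$ is computable, so the side condition $\nf(s)\not\equiv\nf(t)$ of $\mathrm{W}_1$ can be checked.
\item $\hull(t)$ is computable. By the weight facts, every $x\sq\mathrm{val}(t)$ has $W(x)\le w(t)$, and there are only finitely many multisets of bounded weight. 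Hence one can enumerate the normal terms of weight at most $w(t)$ and test $\sq$ in $\mm$ by the rank recursion. This decides the side condition of $\mathrm{W}_2$, with the disjunction listed in a fixed canonical order.
\end{itemize}
This was already noted after the definition of $\WFm$, so I only make it explicit.

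Second, local finite satisfiability. Any finite subset of the axioms of $\WFm$ has a finite model by Lemma~\ref{lem:MB}; the truncated structure $M_B$ supplies it. With both hypotheses in place, Theorem~\ref{thm:visser} yields $\WFm\intp\Rr$.

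The step I expect to need the most care is confirming that Visser's theorem applies in the language $L$, which contains function symbols, since finite models must interpret these as total functions. The structure $M_B$ does so: the absorbing element $\infty$ makes $\enc{\cdot}^{M_B}$ and $\U^{M_B}$ total. If the cited form of the theorem is stated only for relational languages, I would first pass to the usual relational reformulation of $\WFm$, with graph relations plus totality and functionality axioms. This reformulation is interpreted in $\WFm$ and vice versa, and its finite fragments are still satisfied by $M_B$ with the graph relations read off the operations. Composing interpretations then gives the claim.
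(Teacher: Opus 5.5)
Your proposal is correct and follows the paper's proof: the paper likewise notes that $\WFm$ is recursively axiomatized (Section~\ref{sec:theories}), obtains local finite satisfiability from Lemma~\ref{lem:MB}, and applies Theorem~\ref{thm:visser}. Your additional details, namely the weight-bound argument for the computability of $\hull(t)$ and the passage to a relational reformulation to handle function symbols, are sound elaborations of points the paper leaves implicit.
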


\begin{proof}
$\WFm$ is recursively axiomatized (Section~\ref{sec:theories}) and, by
Lemma~\ref{lem:MB}, locally finitely satisfiable; apply
Theorem~\ref{thm:visser}.
\end{proof}

\subsection{Theorem A and closing corollaries}

\begin{theorem}[Theorem A]\label{thm:A}
$\WFm\mutint\Rr$.
\end{theorem}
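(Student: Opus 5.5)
Theorem A asserts mutual interpretability, so the plan is to combine the two directions already established in this section rather than prove anything new. For $\Rr\intp\WFm$ I will cite Corollary~\ref{cor:RinWF}. That corollary composes the interpretation $\Rr\intp\WT$ from Theorem~\ref{thm:WTR} with the direct interpretation $\tau$ of $\WT$ in $\WFm$ from Proposition~\ref{prop:WTinWF}. Since direct interpretations compose with arbitrary ones, the composite is a legitimate interpretation of $\Rr$ in $\WFm$.

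For the converse, $\WFm\intp\Rr$, I will cite Proposition~\ref{prop:WFinR}. It rests on two facts. First, $\WFm$ is recursively axiomatized, because $\nf$ and $\hull$ are computable and Lemma~\ref{lem:nf} makes the instance sets of $\mathrm{W}_1$ and $\mathrm{W}_2$ decidable. Second, $\WFm$ is locally finitely satisfiable by the truncated models $M_B$ of Lemma~\ref{lem:MB}. With both facts in hand, Visser's criterion (Theorem~\ref{thm:visser}) applies directly.

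The two directions together give $\WFm\mutint\Rr$. I will also note the consequence: $\WFm$ is consistent because $\mm\models\WFm$, so Fact~\ref{fact:transfer} makes it essentially undecidable.

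At this stage there is no real obstacle left, since the substantive work sits in the lemmas. The one point to check carefully is that Visser's theorem is applied in its exact hypotheses. It requires a recursive axiom set, not merely an r.e.\ one, and this is secured by the decidability of syntactic normal-form comparison and of the hull computation. It also requires finite models for every finite subset of the axioms. Lemma~\ref{lem:MB} provides these after enlarging the subset to include $\mathrm{F}_1$--$\mathrm{F}_3$, and enlarging is harmless because a model of a larger set is a model of every subset.
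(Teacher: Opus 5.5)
Your proposal is correct and is exactly the paper's proof, which just combines Corollary~\ref{cor:RinWF} ($\Rr\intp\WFm$, through $\WT$ and the direct translation $\tau$) with Proposition~\ref{prop:WFinR} ($\WFm\intp\Rr$, by Visser's criterion applied to the truncated models $M_B$). Your extra checks on the hypotheses of Theorem~\ref{thm:visser} and your remark on essential undecidability are accurate, though the statement itself does not need them.
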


\begin{proof}
Corollary~\ref{cor:RinWF} and Proposition~\ref{prop:WFinR}.
\end{proof}

\begin{corollary}[The $\Rr$-cluster]\label{cor:cluster}
$\WFm$ is mutually interpretable with each of $\WT$,
$\mathsf{WTC}^{-\varepsilon}$, $\mathsf{WQT}$ and $\mathsf{WQT}^{*}$.
\end{corollary}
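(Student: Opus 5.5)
The plan is to obtain Corollary~\ref{cor:cluster} purely by transitivity of mutual interpretability, combining Theorem~\ref{thm:A} with the second displayed chain of Theorem~\ref{thm:chains}. Mutual interpretability $\mutint$ is an equivalence relation. It is reflexive and transitive because interpretability $\intp$ is, and it is symmetric by definition. So it suffices to place $\WFm$ in the same class as one member of that chain.

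First step: Theorem~\ref{thm:A} gives $\WFm\mutint\Rr$. Second step: Theorem~\ref{thm:chains} gives $\Rr\mutint\mathsf{WTC}^{-\varepsilon}\mutint\WT\mutint\mathsf{WQT}\mutint\mathsf{WQT}^{*}$, invoked exactly as cited. For each theory $X$ in the list, this means $\Rr\intp X$ and $X\intp\Rr$, after composing the consecutive links of the chain. Third step: compose with Theorem~\ref{thm:A}. From $X\intp\Rr$ and $\Rr\intp\WFm$ we get $X\intp\WFm$, and from $\WFm\intp\Rr$ and $\Rr\intp X$ we get $\WFm\intp X$. Hence $\WFm\mutint X$.

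For $X=\WT$ one can also note that the direction $\WT\intp\WFm$ is witnessed directly by the translation $\tau$ (Proposition~\ref{prop:WTinWF}), with no detour through $\Rr$. The converse direction still goes through $\Rr$ via Theorem~\ref{thm:WTR} and Proposition~\ref{prop:WFinR}. Equivalently, it follows from Visser's criterion, since $\WT$ is interpretable in $\Rr$.

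I expect no real obstacle here. The only point needing care is that composition of interpretations is valid for arbitrary (not necessarily direct) interpretations, which holds since interpretability is transitive as recorded in Section~\ref{sec:prelim}. The other point is that the chain members are used only through the displayed equivalences, so no further properties of $\mathsf{WQT}$ or $\mathsf{WQT}^{*}$ are required.
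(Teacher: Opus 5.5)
Your proof is correct and is the paper's argument: the corollary follows from Theorem~\ref{thm:A} and the second chain of Theorem~\ref{thm:chains} by transitivity of mutual interpretability. Your aside on $\WT$ misstates a direction, though this does not affect the main argument: $\WFm\intp\WT$ needs $\Rr\intp\WT$, not $\WT\intp\Rr$, and Visser's criterion only yields $\WFm\intp\Rr$.
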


\begin{proof}
Theorem~\ref{thm:A} and the second chain of Theorem~\ref{thm:chains}.
\end{proof}

\begin{corollary}\label{cor:closing}
\textup{(i)} $\WFm$ is not finitely axiomatizable.
\textup{(ii)} $\Fm$ is not interpretable in $\Rr$.
\end{corollary}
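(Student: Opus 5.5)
For (i) I will argue by contradiction, using the fact that $\WFm$ is interpretable in $\Rr$ while $\Rr$ is essentially undecidable. Suppose $\WFm$ were finitely axiomatizable, i.e.\ equivalent to a finite set of sentences $\Phi$. Each sentence of $\Phi$ is provable from finitely many axioms of $\WFm$, so $\Phi$, and hence all of $\WFm$, would follow from a finite subset $\Sigma_0$ of the official axioms. By Lemma~\ref{lem:MB}, $\Sigma_0$ has a finite model $M$. Then $M\models\WFm$, and the complete theory $\mathrm{Th}(M)$ of this finite structure in the finite language $L$ would be a consistent extension of $\WFm$. It is decidable, since truth in a fixed finite structure is computable. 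But $\WFm$ is essentially undecidable: by Corollary~\ref{cor:RinWF}, $\Rr\intp\WFm$, and Fact~\ref{fact:transfer} applies, $\WFm$ being consistent because $\mm\models\WFm$. This is a contradiction. The same argument can be phrased as follows: a finite model of all of $\WFm$ would satisfy the instances of $\mathrm{W}_1$ for infinitely many pairwise distinct normal closed terms, which is impossible in a finite structure. I would give this direct pigeonhole version in the write-up, since it avoids the detour: infinitely many closed terms $t_n$ have pairwise distinct normal forms, so $\WFm$ has no finite model, whereas $\Sigma_0$ does.

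For (ii) I will again argue by contradiction. Suppose $\Fm\intp\Rr$. By Corollary~\ref{cor:QinF}, $\Qr\intp\Fm$, so by transitivity $\Qr\intp\Rr$. Theorem~\ref{thm:visser} then says that $\Qr$, a recursively axiomatized theory, is locally finitely satisfiable. Since $\Qr$ is finitely axiomatized, this would give a finite model of all of $\Qr$. However, $\Qr_1$ and $\Qr_2$ force $S$ to be injective but not surjective, because $0\notin\mathrm{ran}(S)$, and no finite set admits such a map. Alternatively, and more directly, $\Fm$ itself is finite, so a finite model of $\Fm$ would be required; yet $\Fm$ has no finite model. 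Indeed, $x\mapsto\enc{x}$ is injective by $\mathrm{F}_7$ and misses $\emptyset$ by $\mathrm{F}_5$. I would present this direct route for (ii), using Theorem~\ref{thm:visser} applied to $\Fm$ itself: if $\Fm\intp\Rr$ then $\Fm$ is locally finitely satisfiable, and since $\Fm$ is finite it would then have a finite model, which it does not.

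There is no real obstacle in either part. The only point needing care is in (i), where one must make sure the infinitude of distinct closed values is actually forced. This follows because the terms $\emptyset,\enc{\emptyset},\enc{\enc{\emptyset}},\dots$ have pairwise distinct normal forms, so $\mathrm{W}_1$ declares them pairwise unequal.
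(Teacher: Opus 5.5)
Your proof is correct. Its skeleton matches the paper's: exhibit a finite model where only infinite models can exist. The differences are local but are genuine simplifications.

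In (i), the paper transfers $\WFm\intp\Rr$ (Theorem~\ref{thm:A}) to a putative finite axiomatization $\Sigma$. It then invokes the ``only if'' direction of Theorem~\ref{thm:visser} to give $\Sigma$ a finite model. But Theorem~\ref{thm:A} itself obtained $\WFm\intp\Rr$ from Lemma~\ref{lem:MB} through the ``if'' direction, so this is a round trip through Visser's theorem. Your compactness step avoids it: a finite set $\Sigma_0$ of official axioms already proves all of $\WFm$, so you can apply Lemma~\ref{lem:MB} directly, and (i) needs no interpretability theory at all.

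In (ii), the paper gets infinitude of models of $\Fm$ from Proposition~\ref{prop:FextendsWF} plus the $\mathrm{W}_1$ instances for $\emptyset,\enc{\emptyset},\enc{\enc{\emptyset}},\dots$. That imports the inductive proof of the $\mathrm{W}_1$ scheme, and with it most of the structural axioms. You instead observe that $\mathrm{F}_5$ and $\mathrm{F}_7$ make $x\mapsto\enc{x}$ an injective self-map omitting $\emptyset$. This uses only those two axioms, and the same argument shows that no finitely axiomatized theory proving $\mathrm{F}_5$ and $\mathrm{F}_7$ is interpretable in $\Rr$.

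What the paper's arrangement buys is uniformity: a single observation (every model of $\WFm$ is infinite) serves both parts. Your alternative routes are also sound: the essential undecidability of $\WFm$ against the decidable $\mathrm{Th}(M)$, and $\Qr\intp\Rr$ against the absence of finite models of $\Qr$.
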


\begin{proof}
Every model of $\WFm$ is infinite: the closed terms
$\emptyset,\enc{\emptyset},\enc{\enc{\emptyset}},\dots$ have pairwise distinct
normal forms, and the corresponding $\mathrm{W}_1$ instances force their values
to be pairwise distinct.

(i) Suppose $\Sigma$ were a finite axiomatization of $\WFm$. Then $\Sigma$ and
$\WFm$ have the same models and the same theorems, so $\Sigma\intp\Rr$ by
Theorem~\ref{thm:A}, and Theorem~\ref{thm:visser} makes $\Sigma$ locally
finitely satisfiable; being finite, $\Sigma$ would itself have a finite model,
which is a model of $\WFm$ --- contradiction.

(ii) By Proposition~\ref{prop:FextendsWF}, every model of $\Fm$ is a model of
$\WFm$, hence infinite. If $\Fm\intp\Rr$, then Theorem~\ref{thm:visser} would
give the finitely axiomatized $\Fm$ a finite model --- contradiction.
\end{proof}

\begin{remark}
Corollary~\ref{cor:closing}(ii) parallels the observation of \cite{KM20} that
$\TT$, being finitely axiomatized with only infinite models, is not
interpretable in $\Rr$. The pair $(\WFm,\Fm)$ thus reproduces, over multisets,
the exact interpretability configuration of the pair $(\WT,\TT)$ over trees:
the schematic theory sits in the cluster of $\Rr$, the finitely axiomatized
theory strictly above it.
\end{remark}

\section{Theorem B: $\Fm$ is mutually interpretable with $\Qr$}\label{sec:thmB}

By Corollary~\ref{cor:QinF}, $\Qr\intp\Fm$. It remains to prove $\Fm\intp\Qr$;
since $S^1_2\mutint\Qr$ (Theorem~\ref{thm:chains}), it suffices to construct an
interpretation of $\Fm$ in Buss's bounded arithmetic $S^1_2$ \cite{Bus86}. The
interpretation arithmetizes the normal-form calculus of
Section~\ref{sec:theories}: hereditarily finite multisets are represented by
canonical words over the bracket alphabet, union by sorted merge, and
containment by occurrence as a balanced segment.

Throughout this section we work in $S^1_2$ and use freely its $\Sigma^b_1$-definable
functions, the coding of bounded sequences, and induction $\mathrm{PIND}$ on
$\Sigma^b_1$-formulas \cite{Bus86,HP93}. All syntactic predicates and operations
introduced below are polynomial-time computable; we identify them with fixed
$\Delta^b_1$-definitions for which $S^1_2$ proves the elementary properties stated.

\subsection{Words, blocks, and canonicity}
Words over the alphabet $\{\langle,\rangle\}$ are identified with their dyadic
codes; $\varepsilon$ denotes the empty word, with code $0$, and $|w|$ the length
of $w$. The \emph{depth sequence} of $w$ assigns to each $i\le|w|$ the number of
occurrences of $\langle$ minus the number of occurrences of $\rangle$ among the
first $i$ letters; it is a $\Sigma^b_1$-definable function of $w$, coded as a
bounded sequence. A word $w$ is \emph{balanced} if its depth sequence is
nonnegative and ends at $0$. A pair $(i,j)$ with $1\le i\le j\le|w|$ is a
\emph{block} of $w$, written $\mathrm{Blk}(w,i,j)$, if the letter at $i$ is
$\langle$, the letter at $j$ is its matching $\rangle$ (the first later position
where the depth returns to its value before $i$); we write $w[i..j]$ for the
corresponding segment. A block is \emph{top-level} if the depth before $i$
is~$0$.

\begin{lemma}[Block structure]\label{lem:blocks}
$S^1_2$ proves: \textup{(i)} any two blocks of a balanced word are disjoint or
nested; \textup{(ii)} every balanced $w\neq\varepsilon$ has a unique
decomposition $w=b_1{}^{\frown}\dots{}^{\frown}b_k$ into its top-level blocks,
listed in positional order (we write $\mathrm{TL}(w)$ for this sequence);
\textup{(iii)} a block of $w$ is either top-level or contained in the interior
$u_m$ of exactly one top-level block $b_m=\langle u_m\rangle$, and it is then a
block of the word $u_m$ under the evident index shift; conversely every block of
$u_m$ is, under the same shift, a block of $w$.
\end{lemma}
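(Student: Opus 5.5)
The three clauses are all statements about the depth sequence $d$ of a balanced word $w$, so I would work throughout with $d$ and the matching function. That function is $\Sigma^b_1$-definable: for a position $i$ with letter $\langle$, $\mathrm{match}(i)$ is the least $j>i$ with $d(j)=d(i-1)$. It exists because $d$ ends at $0\le d(i-1)$ and changes by $\pm1$ per letter. That step needs a discrete intermediate-value argument, which I would prove by $\Sigma^b_1$-$\mathrm{PIND}$, or more simply by the least-number principle for $\Delta^b_1$ formulas available in $S^1_2$. Every later step is a bounded property of positions $\le|w|$, so each can be stated as a $\Delta^b_1$ predicate and handled with the same minimization principle.

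\emph{(i).} Let $(i,j)$ and $(i',j')$ be blocks with $i<i'$, and suppose $i'\le j$. Set $h=d(i-1)$. By minimality of $j$, every position $p$ with $i\le p<j$ has $d(p)>h$. Hence $d(i'-1)\ge h+1$ (the case $i'-1=i-1$ is excluded). If $j'>j$, the walk from $i'$ to $j'$ stays strictly above $d(i'-1)\ge h+1$ before $j'$. But $d(j)=h<d(i'-1)$ with $i'\le j<j'$, which contradicts minimality of $j'$. So $j'\le j$, and the blocks are nested. If instead $i'>j$, they are disjoint.

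\emph{(ii).} Define the top-level blocks as the blocks starting at positions $i$ with $d(i-1)=0$. Start with $i_1=1$; the first letter is $\langle$, since $d\ge0$ and $w\ne\varepsilon$. Then iterate $i_{m+1}=\mathrm{match}(i_m)+1$. After $\mathrm{match}(i_m)$ the depth is back to $0$, so the next letter is again $\langle$ unless the word has ended. Every position therefore lies in exactly one top-level block. Existence of the sequence $\mathrm{TL}(w)$ follows because it is the list of zeros of $d$ and has length $\le|w|$, so it is codable as a bounded sequence defined by a polytime procedure. Uniqueness follows because any decomposition of $w$ into balanced blocks forces the cut points to be exactly the positions where $d$ returns to $0$. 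Concretely, by induction along the decomposition, each piece's first $\langle$ matches its last $\rangle$ exactly when the inner depth stays positive.

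\emph{(iii).} A non-top-level block $(i,j)$ has $d(i-1)\ge1$. Take the top-level block $(i_m,j_m)$ containing position $i$. By (i) the two blocks are nested, and since $(i,j)$ is not top-level we get $i_m<i\le j<j_m$. Uniqueness of $m$ holds because top-level blocks are pairwise disjoint. Under the shift $p\mapsto p-i_m$, the depth sequence of $u_m$ is $d(p)-1$, which is nonnegative on the interior. So the matching condition transfers verbatim in both directions, since it is defined relative to $d(i-1)$ and is invariant under adding a constant.

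The main obstacle is purely formal. The iteration defining $\mathrm{TL}(w)$ must be carried out without unbounded recursion. I would avoid iteration entirely by defining $\mathrm{TL}(w)$ directly as the increasing enumeration of $\{p:d(p)=0\}$, which is $\Sigma^b_1$-definable via sequence coding. Its needed properties would then be proved by $\mathrm{PIND}$ on the number of zeros up to $p$.
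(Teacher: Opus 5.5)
Your proposal is correct and follows essentially the paper's route: everything is read off the depth sequence. Clause (i) comes from the minimality built into the matching bracket, (ii) from identifying top-level blocks with the segments between consecutive zeros of $d$, and (iii) from nesting inside the unique top-level block containing position $i$, whose interior has depth sequence shifted down by one. Your write-up is more explicit than the paper's sketch, for instance in defining $\mathrm{TL}(w)$ as the enumeration of the zeros of $d$. The one point to state carefully is that every minimization you invoke ranges over positions $\le|w|$, so it is length-minimization for $\Sigma^b_1$ formulas, which $S^1_2$ does prove; you do not need a general least-number principle.
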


\begin{proof}
(i) If two blocks overlap without nesting, the matching condition on depths
fails for one of them. (ii) The top-level blocks begin exactly at the positions
of depth $0$ carrying $\langle$; matching is unique, and the segments partition
$w$ by (i). (iii) By (i), a non-top-level block is nested in some top-level
$b_m$, unique by disjointness. If it began at the initial $\langle$ of $b_m$,
uniqueness of the matching bracket would make it equal to $b_m$; hence it lies
strictly within the interior, and the depth sequence of $u_m$ is that of $w$
shifted by one, so blocks correspond under the shift. All statements are
$\Delta^b_1$-properties of the depth sequence, provable by $\mathrm{PIND}$.
\end{proof}

Let $\le_{\mathrm{ll}}$ denote the length-lexicographic order on words: $u
\le_{\mathrm{ll}} v$ iff $|u|<|v|$, or $|u|=|v|$ and $u$ is lexicographically
$\le v$. $S^1_2$ proves $\le_{\mathrm{ll}}$ is a linear order. A sequence of
words is \emph{sorted} if it is nondecreasing in $\le_{\mathrm{ll}}$. Define
$\mathrm{Can}(w)$ ($w$ is \emph{canonical}) by: $w$ is balanced, and for every
block $(i,j)$ of $w$ (including the improper block when $w$ itself is treated at
top level), the sequence of top-level blocks of its interior is sorted.
Equivalently: $w=\varepsilon$, or $\mathrm{TL}(w)$ is sorted and the interior of
each entry is canonical. $\mathrm{Can}$ is $\Delta^b_1$.

\begin{lemma}[Rigidity]\label{lem:rigid}
$S^1_2$ proves: if $L,L'$ are sorted sequences of words and $\pi$ is a bijection
of positions with $L'(\pi(i))=L(i)$ for all $i$, then $L=L'$.
\end{lemma}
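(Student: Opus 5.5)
We prove the Rigidity Lemma directly in $S^1_2$ by showing $L(i)=L'(i)$ for every position $i$. The argument is a comparison of ranks: we count how many entries of each sequence lie strictly below a given word.

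\textbf{Setup.} Since $\pi$ is a bijection of positions, $L$ and $L'$ have the same length $k$. For a word $u$ and a sequence $M$, let
\[
\#_{<}(M,u) \quad\text{and}\quad \#_{\le}(M,u)
\]
denote the number of positions $p$ with $M(p)<_{\mathrm{ll}}u$, respectively $M(p)\le_{\mathrm{ll}}u$. These counting functions are $\Sigma^b_1$-definable, because the sequences are bounded.

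\textbf{Step 1: the bijection preserves counts.} By hypothesis $L'(\pi(p))=L(p)$, so $\pi$ restricts to a bijection from $\{p: L(p)<_{\mathrm{ll}}u\}$ onto $\{q: L'(q)<_{\mathrm{ll}}u\}$. The same holds with $\le_{\mathrm{ll}}$ in place of $<_{\mathrm{ll}}$. The standard fact that a bounded bijection between bounded sets preserves cardinality is provable in $S^1_2$ for $\Delta^b_1$-sets. Concretely, we use $\Sigma^b_1$-$\mathrm{PIND}$ (or the counting infrastructure of \cite{HP93}) on prefixes of the position range. This yields $\#_{<}(L,u)=\#_{<}(L',u)$ and $\#_{\le}(L,u)=\#_{\le}(L',u)$ for every $u$.

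\textbf{Step 2: sortedness pins down positions.} Let $M$ be sorted and fix a position $i$ with $M(i)=u$. Linearity of $\le_{\mathrm{ll}}$ (already noted as provable) gives:
\[
\#_{<}(M,u) < i \le \#_{\le}(M,u).
\]
Indeed, the positions holding words $<_{\mathrm{ll}}u$ form an initial segment. Those holding $u$ form the next contiguous segment. Both facts follow from monotonicity of a sorted sequence together with linearity. Conversely, any position $j$ in the interval $(\#_{<}(M,u),\#_{\le}(M,u)]$ satisfies $M(j)=u$. The reason is that $M(j)<_{\mathrm{ll}}u$ would put $j$ in the first segment, and $M(j)>_{\mathrm{ll}}u$ would push $j$ beyond $\#_{\le}(M,u)$.

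\textbf{Step 3: conclude.} Fix $i$ and set $u:=L(i)$. Step 2 applied to $L$ places $i$ in the interval determined by $u$. Step 1 shows that this interval is the same for $L'$. Note that $\pi(i)$ witnesses that $u$ occurs in $L'$, so the interval is nonempty there as well. The converse half of Step 2, applied to $L'$, then gives $L'(i)=u=L(i)$. Hence $L=L'$.

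\textbf{Main obstacle.} The substantive point is Step 1: formalizing within $S^1_2$ that a bijection between bounded $\Delta^b_1$-definable sets preserves counts. I would derive it by $\mathrm{PIND}$ on the number of elements processed. An alternative, cruder route avoids counting. One shows by induction on $i$ that $L(i)=L'(i)$, using the minimum of the remaining entries. This, however, needs an induction over positions whose length is polynomial, which is acceptable in $S^1_2$ since $k$ is bounded by the code length. I would fall back on it if the counting lemma proved awkward to cite.
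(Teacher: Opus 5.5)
Your proof is correct, but it takes a different route from the paper's.

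\textbf{The paper's route.} It argues by induction on the common length. First, $L(1)=L'(\pi(1))\ge_{\mathrm{ll}}L'(1)$, and symmetrically $L'(1)\ge_{\mathrm{ll}}L(1)$, so the first entries agree by antisymmetry. Then $L'(\pi(1))=L'(1)$, so composing $\pi$ with the transposition of $1$ and $\pi(1)$ preserves the hypothesis. One may therefore assume $\pi(1)=1$, delete the first entries, and apply the induction hypothesis. This is essentially the ``cruder'' fallback you mention at the end.

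\textbf{Your route.} You give a closed-form description of a sorted sequence $M$: the entry at position $i$ is the unique $u$ with $\#_{<}(M,u)<i\le\#_{\le}(M,u)$. A sorted arrangement is thus determined by its counting functions, and the only induction left is the one behind permutation-invariance of counts.

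\textbf{What each buys.} The paper's argument uses only sortedness, antisymmetry and elementary surgery on $\pi$, with no counting at all. Yours never modifies $\pi$ and handles all positions at once. It also yields a stronger statement: two sorted sequences coincide as soon as $\#_{<}(\cdot,u)$ and $\#_{\le}(\cdot,u)$ agree for every $u$.

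\textbf{A point to state more carefully in Step~1.} ``A bijection between bounded $\Delta^b_1$-sets preserves cardinality'' is not available in $S^1_2$ as a general principle. Counting a polynomial-time predicate up to an arbitrary bound is a \#P-function, while the $\Sigma^b_1$-definable functions of $S^1_2$ are exactly the polynomial-time ones. Your argument works because positions range over $\{1,\dots,k\}$ with $k$ bounded by the length of the code. Hence the sequences of partial counts are short, and every induction you need is $\Sigma^b_1$-$\mathrm{LIND}$ over a logarithmic range. The counting lemma should be formulated for such sharply bounded sets.

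Finally, the nonemptiness remark in Step~3 is superfluous, since $i$ itself lies in the interval.
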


\begin{proof}
$\mathrm{PIND}$ on the common length. The first entry of $L$ occurs in $L'$, so
$L'(1)\le_{\mathrm{ll}}L(1)$ by sortedness; symmetrically
$L(1)\le_{\mathrm{ll}}L'(1)$, and antisymmetry gives $L(1)=L'(1)$. Composing
$\pi$ with a transposition, we may assume $\pi(1)=1$; delete the first entries
and apply the induction hypothesis.
\end{proof}

\subsection{The operations}
Define: $\mathrm{Wr}(x):=\langle x\rangle$ (\emph{wrapping}); for a block word
$b=\langle u\rangle$ with $\mathrm{Can}(u)$, $\mathrm{Ins}(w,b)$ is the word
obtained by inserting $b$ into $\mathrm{TL}(w)$ at the first position whose
entry is $\ge_{\mathrm{ll}}b$ (at the end if none); $\mathrm{Rem}(w,b)$ is the
word obtained by deleting the first entry of $\mathrm{TL}(w)$ equal to $b$
(defined when one exists; we write $\mathrm{Occ}(w,b)$ for existence);
$\mathrm{Mrg}(x,y)$ is the result of inserting into $x$, successively in
positional order, the entries of $\mathrm{TL}(y)$. The iteration is coded by
the bounded sequence of intermediate words, each of length $\le|x|+|y|$, so
$\mathrm{Mrg}$ is $\Sigma^b_1$-definable with $S^1_2$-provable totality and
uniqueness on canonical arguments.

\begin{lemma}[Bookkeeping]\label{lem:book}
$S^1_2$ proves, for canonical $w,x,y$ and blocks $a,b$ with canonical
interiors:
\textup{(i)} $\mathrm{Can}(\mathrm{Wr}(x))$,
$\mathrm{Can}(\mathrm{Ins}(w,b))$, $\mathrm{Can}(\mathrm{Mrg}(x,y))$, and
$\mathrm{Can}(\mathrm{Rem}(w,b))$ when defined;
\textup{(ii)} $\mathrm{Rem}(\mathrm{Ins}(w,b),b)=w$;
\textup{(iii)} if $\mathrm{Occ}(w,b)$ then
$\mathrm{Ins}(\mathrm{Rem}(w,b),b)=w$;
\textup{(iv)} $|\mathrm{Ins}(w,b)|=|w|+|b|$, hence
$|\mathrm{Mrg}(x,y)|=|x|+|y|$;
\textup{(v)} $\mathrm{TL}(\mathrm{Mrg}(x,y))$ is a permutation of
$\mathrm{TL}(x){}^{\frown}\mathrm{TL}(y)$, with a $\Sigma^b_1$-witnessing
bijection; in particular $\mathrm{Occ}(\mathrm{Mrg}(x,y),b)$ iff
$\mathrm{Occ}(x,b)$ or $\mathrm{Occ}(y,b)$.
\end{lemma}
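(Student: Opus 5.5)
The plan is to reduce every clause of the Bookkeeping Lemma to statements about the sequence $\mathrm{TL}(\cdot)$ of top-level blocks. By Lemma~\ref{lem:blocks}(ii), a canonical word is just the concatenation of a sorted sequence of blocks with canonical interiors. So each operation can be handled as an operation on sequences of blocks, and the facts about sorted sequences can be proved by $\Sigma^b_1$-$\mathrm{PIND}$ on sequence length. I will first record a concatenation fact, also via $\mathrm{PIND}$ on the depth sequence: if $b_1,\dots,b_k$ are block words, i.e.\ each of the form $\langle u\rangle$ with $u$ balanced, then $\mathrm{TL}(b_1{}^\frown\dots{}^\frown b_k)=(b_1,\dots,b_k)$. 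Hence a word is canonical if and only if it is such a concatenation with the sequence sorted and all interiors canonical.

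For (i), $\mathrm{Wr}(x)$ has the single top-level block $\langle x\rangle$, so canonicity reduces to $\mathrm{Can}(x)$.
\begin{itemize}
\item For $\mathrm{Ins}(w,b)$: inserting $b$ before the first entry $\ge_{\mathrm{ll}}b$ yields a sorted sequence. The entries to the left are $<_{\mathrm{ll}}b$ and the entries to the right are $\ge_{\mathrm{ll}}b$, using linearity of $\le_{\mathrm{ll}}$. Interiors are unchanged or equal to that of $b$.
\item For $\mathrm{Rem}(w,b)$: deleting an entry preserves sortedness.
\item For $\mathrm{Mrg}(x,y)$: use $\mathrm{PIND}$ (or length induction) along the coded sequence of intermediate words. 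The invariant ``the current word is canonical'' is $\Delta^b_1$ and is preserved at each step by the $\mathrm{Ins}$ case.
\end{itemize}

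For (ii) and (iii), suppose $b$ is inserted at position $p$, the first position whose old entry is $\ge_{\mathrm{ll}}b$. Every earlier entry is $<_{\mathrm{ll}}b$, hence $\neq b$, so the first occurrence of $b$ in the new sequence is exactly at $p$, and removing it restores $\mathrm{TL}(w)$. Conversely, suppose $\mathrm{Occ}(w,b)$ holds and $q$ is the first position carrying $b$. By sortedness, the entries before $q$ are $\le_{\mathrm{ll}}b$ and differ from $b$, hence are $<_{\mathrm{ll}}b$. Therefore in $\mathrm{Rem}(w,b)$ the first entry $\ge_{\mathrm{ll}}b$ sits at position $q$ (or there is none and $q$ was last), and reinserting puts $b$ back at $q$. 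Both arguments are bounded quantifier manipulations once the position-shift description of insertion and deletion on coded sequences is available.

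For (iv) and (v), I will use $\mathrm{PIND}$ along the merge iteration. The length identity for $\mathrm{Ins}$ is immediate from the concatenation description. For the merge, I carry the invariants ``$|\text{current}|=|x|+$ total length of the entries of $\mathrm{TL}(y)$ inserted so far'' and ``there is a bijection from the current $\mathrm{TL}$ onto $\mathrm{TL}(x)$ followed by the inserted prefix of $\mathrm{TL}(y)$''. The bijection is coded as a bounded sequence of positions of polynomial size, so the invariant is $\Sigma^b_1$ and $\mathrm{PIND}$ applies. At each step the bijection is extended by shifting indices past the insertion point. The $\mathrm{Occ}$ consequence then follows by transporting an occurrence along the bijection or its inverse.

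I expect the main obstacle to be (v), specifically keeping the witnessing bijection inside $\Sigma^b_1$ with a polynomial bound, so that $\Sigma^b_1$-$\mathrm{PIND}$ really applies. I plan to bound the code of the bijection by a term in $|x|+|y|$, exploiting the fact that the number of top-level blocks is at most half the word length. I would also run the induction on the number of steps performed, in the usual $\mathrm{LIND}$ form derivable in $S^1_2$, rather than on the words themselves.
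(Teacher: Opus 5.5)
Your proposal is correct and follows the paper's own proof essentially clause by clause. It uses the same ingredients: preservation of sortedness by the choice of insertion point for (i), the first-occurrence/minimality argument for (ii) and (iii), and induction along the coded merge iteration with composed witnessing bijections for (iv) and (v). The only difference is that you spell out bookkeeping the paper leaves implicit, namely the concatenation description of $\mathrm{TL}$, the polynomial bound on the code of the bijection so that the invariant stays $\Sigma^b_1$, and the use of $\Sigma^b_1$-$\mathrm{LIND}$ on the step count.
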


\begin{proof}
(i) Insertion preserves sortedness by choice of position and leaves interiors
untouched; wrapping produces a single-block word. (ii) The inserted entry is
the first occurrence of $b$: entries equal to $b$ that precede the insertion
point would contradict its minimality, so deletion of the first occurrence
removes the inserted entry --- and if equal entries occur, deleting any
occurrence yields the same sequence. (iii) Symmetrically: reinsertion of $b$
lands at the position of the deleted first occurrence, by sortedness. (iv)
Immediate from the definitions, by $\mathrm{PIND}$ along the iteration for
$\mathrm{Mrg}$. (v) Each insertion adds exactly its argument to the entry pool
and preserves the rest; compose the witnessing bijections along the iteration.
\end{proof}

\subsection{The translation and the axioms $\mathrm{F}_1$--$\mathrm{F}_8$}
Let $\rho$ be the translation of $L$ into the language of $S^1_2$ with domain
$\delta(x):=\mathrm{Can}(x)$, equality as identity, $\emptyset\mapsto
\varepsilon$, $\enc{\cdot}\mapsto\mathrm{Wr}$, $\U\mapsto\mathrm{Mrg}$, and
$x\sq y$ translated by the formula $\mathrm{Sub}(x,y)$ of
Section~\ref{subsec:contain}. The domain is nonempty
($\mathrm{Can}(\varepsilon)$) and provably
closed under the operations (Lemma~\ref{lem:book}(i)).

\begin{proposition}\label{prop:F18}
$S^1_2$ proves the $\rho$-translations of
$\mathrm{F}_1$--$\mathrm{F}_8$.
\end{proposition}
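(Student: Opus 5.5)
The plan is to reduce every axiom to a statement about the multiset of top-level blocks, using Rigidity (Lemma~\ref{lem:rigid}) as the single tool for proving equalities of canonical words. The key intermediate claim I would prove first is an \emph{extensionality principle}. Call it (E): for canonical $x,y$, if $\mathrm{TL}(x)$ is a permutation of $\mathrm{TL}(y)$ via a $\Sigma^b_1$-witnessed bijection, then $x=y$. This follows from Lemma~\ref{lem:rigid}, since both block sequences are sorted by $\mathrm{Can}$, and from Lemma~\ref{lem:blocks}(ii), since a balanced word is the concatenation of its top-level blocks. Combined with Lemma~\ref{lem:book}(v), (E) gives the monoid laws at once.

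\emph{Step 1: the monoid laws.} For $\mathrm{F}_1$, the top-level sequences of $\mathrm{Mrg}(\mathrm{Mrg}(x,y),z)$ and $\mathrm{Mrg}(x,\mathrm{Mrg}(y,z))$ are both permutations of $\mathrm{TL}(x)^\frown\mathrm{TL}(y)^\frown\mathrm{TL}(z)$. We compose the witnessing bijections and apply (E). $\mathrm{F}_2$ is handled the same way, and $\mathrm{F}_3$ holds because $\mathrm{TL}(\varepsilon)$ is empty.

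\emph{Step 2: $\mathrm{F}_5$--$\mathrm{F}_7$.} These follow from lengths and wrapping. $\mathrm{Wr}(x)$ has length $|x|+2\neq0$, which gives $\mathrm{F}_5$. By Lemma~\ref{lem:book}(iv), $|\mathrm{Mrg}(x,y)|=0$ forces $|x|=0$, which gives $\mathrm{F}_6$. If $\langle x\rangle=\langle y\rangle$, stripping the outer letters gives $x=y$, which gives $\mathrm{F}_7$.

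\emph{Step 3: cancellation $\mathrm{F}_4$.} I would argue by $\mathrm{PIND}$ (or ordinary induction, available for $\Sigma^b_1$ in $S^1_2$ on the length parameter) on the number of top-level blocks of $z$. The statement to prove is that $\mathrm{Mrg}(x,z)=\mathrm{Mrg}(y,z)$ implies $x=y$.
\begin{itemize}
\item First, show $\mathrm{Mrg}(x,z)=\mathrm{Ins}(\mathrm{Mrg}(x,z'),b)$, where $b$ is the last block of $\mathrm{TL}(z)$ and $z'$ is the rest. This is essentially the definition of the iteration.
\item Then apply $\mathrm{Rem}(\cdot,b)$ and Lemma~\ref{lem:book}(ii) to peel off one block at a time.
\end{itemize}
A cleaner alternative avoids the induction on $z$ altogether. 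One compares block multiplicities: the number of occurrences of each block $b$ in $\mathrm{TL}(\mathrm{Mrg}(x,z))$ equals its count in $x$ plus its count in $z$ (by Lemma~\ref{lem:book}(v)). Equal counts then yield a bijection of $\mathrm{TL}(x)$ with $\mathrm{TL}(y)$, and (E) finishes. Counting functions over bounded sequences are available in $S^1_2$, so I would take this route. Building the bijection from equal counts needs sortedness: since equal blocks are contiguous in a sorted list, the bijection can be taken to be the identity on positions.

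\emph{Step 4: the Levi property $\mathrm{F}_8$.} Suppose $\mathrm{Mrg}(u,v)=\mathrm{Mrg}(\mathrm{Wr}(x),w)$. Then $b:=\langle x\rangle$ occurs in $\mathrm{TL}$ of the left side, so by Lemma~\ref{lem:book}(v) we have $\mathrm{Occ}(u,b)$ or $\mathrm{Occ}(v,b)$. Say $\mathrm{Occ}(u,b)$. Put $w':=\mathrm{Rem}(u,b)$, which is canonical by Lemma~\ref{lem:book}(i). By Lemma~\ref{lem:book}(iii), $u=\mathrm{Ins}(w',b)$. It remains to show $\mathrm{Ins}(w',b)=\mathrm{Mrg}(\mathrm{Wr}(x),w')$. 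Both sides have top-level sequences that are permutations of $b^\frown\mathrm{TL}(w')$, so (E) gives the equality. The case $\mathrm{Occ}(v,b)$ is symmetric.

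\textbf{Main obstacle.} I expect the hardest part to be (E) together with the counting argument for $\mathrm{F}_4$. Lemma~\ref{lem:rigid} needs an explicit bijection, and one must check that all bijections and counts stay $\Sigma^b_1$-definable, with PIND applied to formulas of the right complexity. The rest is routine composition of the Bookkeeping facts.
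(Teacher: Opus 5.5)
Apart from $\mathrm{F}_4$, your proposal is the paper's proof. Your (E) is exactly the combination of Lemma~\ref{lem:rigid} with Lemma~\ref{lem:blocks}(ii) that the paper uses for $\mathrm{F}_1,\mathrm{F}_2$; in $\mathrm{F}_8$ you apply it directly where the paper routes through $\mathrm{F}_2^{\rho}$. The first route you sketch for $\mathrm{F}_4$ is also precisely what the paper does: $\mathrm{Mrg}(x,z)=\mathrm{Ins}(\mathrm{Mrg}(x,z^-),b)$, single-block cancellation by $\mathrm{Rem}$ and Lemma~\ref{lem:book}(ii), and induction on the number of top-level blocks of $z$ with $x,y$ fixed.

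The counting route you say you would take instead is sound, but it relocates the induction rather than removing it. You need (a) and (b) below.
(a) Occurrence counts are invariant under a $\Sigma^b_1$-witnessed bijection of positions. This is proved by induction along the sequence, adjusting the coded bijection by a transposition at each step. It is unproblematic in $S^1_2$ because the sequences have at most $|z|$-many entries, so the bijections are coded objects.
(b) A sorted sequence is determined by its count function. Look at the first position where $\mathrm{TL}(x)$ and $\mathrm{TL}(y)$ differ, or where one of them ends. The $\le_{\mathrm{ll}}$-smaller (or surviving) entry there has strictly larger count in its own list, because every entry of the other list from that position on is strictly larger.

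Note that (b) yields $\mathrm{TL}(x)=\mathrm{TL}(y)$ outright, so your final appeal to (E) is redundant. Saying \emph{the bijection can be taken to be the identity on positions} states the conclusion; it does not construct anything.

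The trade-off is this. The paper's argument needs no counting theory at all, only the operational definition of $\mathrm{Mrg}$ as iterated insertion and the single identity $\mathrm{Rem}(\mathrm{Ins}(w,b),b)=w$. Your route, once (a) and (b) are in place, gives a uniform extensionality principle: canonical words are equal iff their count functions agree. That principle does not depend on the order in which $\mathrm{Mrg}$ inserts. It would settle the word equalities in $\mathrm{F}_1$, $\mathrm{F}_2$, $\mathrm{F}_4$ and $\mathrm{F}_8$ by one and the same comparison.
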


\begin{proof}
$\mathrm{F}_1,\mathrm{F}_2$: by Lemma~\ref{lem:book}(v), both sides of each
identity are canonical words whose top-level sequences are sorted arrangements
of pools related by a bijection (composition of the witnessing bijections with
the evident regrouping); Lemma~\ref{lem:rigid} applied to the two sorted
sequences gives equality of the sequences, hence of the words by
Lemma~\ref{lem:blocks}(ii).

$\mathrm{F}_3$: $\mathrm{TL}(\varepsilon)$ is empty, so the iteration is empty
and $\mathrm{Mrg}(x,\varepsilon)=x$.

$\mathrm{F}_4$: first, single-block cancellation: if
$\mathrm{Ins}(u,b)=\mathrm{Ins}(v,b)$, then applying $\mathrm{Rem}(\cdot\,,b)$
to both sides and invoking Lemma~\ref{lem:book}(ii) gives $u=v$. Now argue by
$\mathrm{PIND}$ on the number $k$ of top-level blocks of $z$, with $x,y$ as
parameters; the induction formula is $\Delta^b_1$. For $k=0$, $z=\varepsilon$
and $\mathrm{F}_3$ applies. For $k>0$, write $z^-$ for the word of the first $k-1$
top-level blocks of $z$ and $b$ for the last; $z^-$ is canonical (a sorted
prefix with untouched interiors) and, by the definition of $\mathrm{Mrg}$ as
positional iteration, $\mathrm{Mrg}(x,z)=\mathrm{Ins}(\mathrm{Mrg}(x,z^-),b)$.
The hypothesis $\mathrm{Mrg}(x,z)=\mathrm{Mrg}(y,z)$ thus yields, by
single-block cancellation, $\mathrm{Mrg}(x,z^-)=\mathrm{Mrg}(y,z^-)$, and the
induction hypothesis gives $x=y$.

$\mathrm{F}_5$: $|\mathrm{Wr}(x)|=|x|+2>0=|\varepsilon|$.

$\mathrm{F}_6$: if $\mathrm{Mrg}(x,y)=\varepsilon$ then
Lemma~\ref{lem:book}(iv) gives $|x|+|y|=0$, so $x=\varepsilon$.

$\mathrm{F}_7$: equality of the words $\langle x\rangle=\langle y\rangle$
gives equality of the interiors.

$\mathrm{F}_8$: suppose
$\mathrm{Mrg}(u,v)=\mathrm{Mrg}(w,\mathrm{Wr}(c))=
\mathrm{Ins}(w,\mathrm{Wr}(c))$. The inserted entry is a top-level block of the
right-hand side, so $\mathrm{Occ}(\mathrm{Mrg}(u,v),\mathrm{Wr}(c))$, and
Lemma~\ref{lem:book}(v) gives $\mathrm{Occ}(u,\mathrm{Wr}(c))$ or
$\mathrm{Occ}(v,\mathrm{Wr}(c))$. In the first case put
$w':=\mathrm{Rem}(u,\mathrm{Wr}(c))$: Lemma~\ref{lem:book}(iii) gives
$u=\mathrm{Ins}(w',\mathrm{Wr}(c))=\mathrm{Mrg}(w',\mathrm{Wr}(c))$, which by
the already established $\mathrm{F}_2^{\rho}$ equals
$\mathrm{Mrg}(\mathrm{Wr}(c),w')$, the translation of $\enc{c}\U w'$. The
second case is symmetric.
\end{proof}

\subsection{Containment and the axiom $\mathrm{F}_9$}\label{subsec:contain}
Define
\[
\mathrm{Sub}(x,t)\ :\Longleftrightarrow\ x=t\ \vee\ \exists i,j\le|t|\,
\bigl(\mathrm{Blk}(t,i,j)\wedge t[i..j]=\mathrm{Wr}(x)\bigr).
\]

\begin{proposition}\label{prop:F9}
$S^1_2$ proves the $\rho$-translation of $\mathrm{F}_9$: for canonical $x,t$,
\[
\mathrm{Sub}(x,t)\ \leftrightarrow\ x=t\ \vee\ \exists w,u\,
\bigl(\mathrm{Can}(w)\wedge\mathrm{Can}(u)\wedge
t=\mathrm{Mrg}(w,\mathrm{Wr}(u))\wedge\mathrm{Sub}(x,u)\bigr).
\]
\end{proposition}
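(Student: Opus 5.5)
The plan is to prove the two directions separately, working in $S^1_2$ with canonical $x,t$. Everything rests on the block analysis of Lemma~\ref{lem:blocks} and the bookkeeping of Lemma~\ref{lem:book}. The guiding observation is this: by Lemma~\ref{lem:book}(iii), the existential side of the translated $\mathrm{F}_9$ says exactly that $\mathrm{Wr}(u)$ is a top-level block of $t$ (an entry of $\mathrm{TL}(t)$) and $\mathrm{Sub}(x,u)$. Indeed, $t=\mathrm{Mrg}(w,\mathrm{Wr}(u))=\mathrm{Ins}(w,\mathrm{Wr}(u))$ gives $\mathrm{Occ}(t,\mathrm{Wr}(u))$. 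Conversely, $\mathrm{Occ}(t,\mathrm{Wr}(u))$ yields the witness $w:=\mathrm{Rem}(t,\mathrm{Wr}(u))$, which is canonical by Lemma~\ref{lem:book}(i). The interior $u$ of a top-level block of a canonical word is canonical by the recursive characterization of $\mathrm{Can}$.

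\emph{($\Leftarrow$).} If $x=t$ we are done. Otherwise $\mathrm{Wr}(u)$ is a top-level block $b_m=\langle u\rangle$ of $t$, and $\mathrm{Sub}(x,u)$ holds. If $x=u$, then $b_m$ itself is a block of $t$ whose segment is $\mathrm{Wr}(x)$. If instead $u[i..j]=\mathrm{Wr}(x)$ for a block $(i,j)$ of $u$, then Lemma~\ref{lem:blocks}(iii) turns it, under the index shift by the starting position of $b_m$ plus one, into a block of $t$ whose segment is the same word. The witnesses $i,j$ are bounded by $|t|$, so no induction is needed.

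\emph{($\Rightarrow$).} Suppose $\mathrm{Sub}(x,t)$ with $x\neq t$, so that some block $(i,j)$ of $t$ has $t[i..j]=\mathrm{Wr}(x)$. By Lemma~\ref{lem:blocks}(iii), either $(i,j)$ is top-level or it lies inside the interior $u_m$ of exactly one top-level block $b_m=\langle u_m\rangle$ and is a block of $u_m$ after the shift.
\begin{itemize}
\item In the top-level case, put $u:=x$. Then $\mathrm{Wr}(u)$ is an entry of $\mathrm{TL}(t)$, $\mathrm{Sub}(x,u)$ holds trivially, and $w:=\mathrm{Rem}(t,\mathrm{Wr}(x))$ is the required witness.
\item In the nested case, put $u:=u_m$. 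Then $\mathrm{Sub}(x,u_m)$ holds via the shifted block, $\mathrm{Occ}(t,b_m)$ holds, and $w:=\mathrm{Rem}(t,b_m)$ works by Lemma~\ref{lem:book}(iii).
\end{itemize}
In both cases we must also check $t=\mathrm{Mrg}(w,\mathrm{Wr}(u))$. The definition of $\mathrm{Mrg}$ iterates over the single top-level block of $\mathrm{Wr}(u)$, so $\mathrm{Mrg}(w,\mathrm{Wr}(u))=\mathrm{Ins}(w,\mathrm{Wr}(u))$. This is the identity already used for $\mathrm{F}_4$ and $\mathrm{F}_8$ in Proposition~\ref{prop:F18}.

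The main obstacle is making the shift correspondence of Lemma~\ref{lem:blocks}(iii) precise as a $\Delta^b_1$ statement. We need that segment equality is preserved, i.e.\ that $u_m[i'..j']$ and $t[i..j]$ are the same word, and that the case distinction on the depth before $i$ is decidable. Both follow from the depth-sequence description: the depth in $u_m$ equals the depth in $t$ minus one throughout the interior. A secondary point is the identification of $\mathrm{Occ}$ (equality with an entry of $\mathrm{TL}(t)$) with being a top-level block. This holds by the definition of $\mathrm{TL}$ in Lemma~\ref{lem:blocks}(ii), since the top-level blocks are listed there positionally.
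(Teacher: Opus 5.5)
Your proposal is correct and follows the paper's proof essentially step for step. It uses the same case split via Lemma~\ref{lem:blocks}(iii): a top-level witness block with $u:=x$, or a nested block with $u:=u_m$. It takes the same witness $w:=\mathrm{Rem}(t,\mathrm{Wr}(u))$, justified by Lemma~\ref{lem:book}(iii), and the same index-shift argument for ($\Leftarrow$). Two points of yours are merely more explicit, not a different route. You check that $w$ and $u$ are canonical, which the paper leaves implicit. You also note directly that $\mathrm{Mrg}(w,\mathrm{Wr}(u))=\mathrm{Ins}(w,\mathrm{Wr}(u))$, which makes the paper's appeal to $\mathrm{F}_2^{\rho}$ in ($\Rightarrow$) unnecessary.
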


\begin{proof}
($\Leftarrow$) Assume $t=\mathrm{Mrg}(w,\mathrm{Wr}(u))=
\mathrm{Ins}(w,\mathrm{Wr}(u))$ and $\mathrm{Sub}(x,u)$. The inserted entry
$\mathrm{Wr}(u)$ is a top-level block of $t$, at positions $(p,q)$ say. If
$x=u$, then $t[p..q]=\mathrm{Wr}(x)$ witnesses $\mathrm{Sub}(x,t)$. If
$x\neq u$, then $\mathrm{Sub}(x,u)$ provides a block $(i',j')$ of $u$ with
$u[i'..j']=\mathrm{Wr}(x)$; by Lemma~\ref{lem:blocks}(iii) the shifted pair
$(p+i',p+j')$ is a block of $t$ with the same segment, witnessing
$\mathrm{Sub}(x,t)$.

($\Rightarrow$) Assume $\mathrm{Sub}(x,t)$ with $x\neq t$, witnessed by a block
$(i,j)$ of $t$ with $t[i..j]=\mathrm{Wr}(x)$. If $(i,j)$ is top-level, put
$u:=x$ and $w:=\mathrm{Rem}(t,\mathrm{Wr}(x))$: Lemma~\ref{lem:book}(iii) and
$\mathrm{F}_2^{\rho}$ give $t=\mathrm{Mrg}(w,\mathrm{Wr}(u))$, and
$\mathrm{Sub}(x,u)$ holds by the left disjunct $x=u$. If $(i,j)$ is not
top-level, Lemma~\ref{lem:blocks}(iii) places it, under an index shift, as a
block of the interior $u_m$ of a unique top-level block $\langle u_m\rangle$ of
$t$; thus $\mathrm{Sub}(x,u_m)$. Putting $u:=u_m$ and
$w:=\mathrm{Rem}(t,\langle u_m\rangle)$, Lemma~\ref{lem:book}(iii) gives
$t=\mathrm{Mrg}(w,\mathrm{Wr}(u_m))$ as before. No uniqueness of the witness
$w$ is used at any point.
\end{proof}

\subsection{Assembly}

\begin{theorem}[Theorem B]\label{thm:B}
$\Fm\mutint\Qr$. In particular, $\Fm$ is essentially undecidable.
\end{theorem}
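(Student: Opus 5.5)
The theorem is an assembly of results already in place, and I will prove it in two directions followed by a transfer step.

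\emph{First direction.} For $\Qr\intp\Fm$ I will simply cite Corollary~\ref{cor:QinF}. It composes the direct interpretation $\tau$ of $\TT$ in $\Fm$ (Theorem~\ref{thm:TinF}, resting on the Pairing Lemma~\ref{lem:pairing}) with $\Qr\intp\TT$ from Theorem~\ref{thm:QT}.

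\emph{Second direction.} For $\Fm\intp\Qr$ I will use the translation $\rho$ into $S^1_2$. Its domain is $\mathrm{Can}$, with $\emptyset\mapsto\varepsilon$, $\enc{\cdot}\mapsto\mathrm{Wr}$, $\U\mapsto\mathrm{Mrg}$ and $\sq\mapsto\mathrm{Sub}$. To check that $\rho$ is an interpretation I need three things.
\begin{itemize}
\item[(a)] $S^1_2\vdash\exists x\,\mathrm{Can}(x)$. This holds because $\mathrm{Can}(\varepsilon)$.
\item[(b)] $S^1_2$ proves that the translated function symbols are total and functional on the domain. By Lemma~\ref{lem:book}(i), the domain is closed under $\mathrm{Wr}$ and $\mathrm{Mrg}$. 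Uniqueness and provable totality of $\mathrm{Mrg}$ on canonical arguments were recorded when it was defined, via the bounded sequence of intermediate words. For $\mathrm{Wr}$ the matter is trivial, since it is concatenation with two fixed letters.
\item[(c)] $S^1_2$ proves the $\rho$-translation of each axiom. For $\mathrm{F}_1$--$\mathrm{F}_8$ this is Proposition~\ref{prop:F18}, and for $\mathrm{F}_9$ it is Proposition~\ref{prop:F9}.
\end{itemize}
The one point of care is that relativizing the quantifiers of $\mathrm{F}_8$ and $\mathrm{F}_9$ to $\mathrm{Can}$ puts the existential witnesses inside the domain. The witnesses produced in those proofs are all canonical: $\mathrm{Rem}(u,\mathrm{Wr}(c))$ and $\mathrm{Rem}(t,\langle u_m\rangle)$ are canonical by Lemma~\ref{lem:book}(i). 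The interiors $u_m$ are canonical because of the recursive characterization of $\mathrm{Can}$. This gives $\Fm\intp S^1_2$.

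Next, $S^1_2\intp\Qr$ by the first chain of Theorem~\ref{thm:chains} (\cite{FF13}). Transitivity of interpretability then yields $\Fm\intp\Qr$. Together with the first direction this gives $\Fm\mutint\Qr$.

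\emph{Essential undecidability.} $\Fm$ is consistent by Proposition~\ref{prop:soundF}, and $\Qr$ is essentially undecidable. Since $\Qr\intp\Fm$, Fact~\ref{fact:transfer} makes $\Fm$ essentially undecidable.

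\emph{Main obstacle.} The only step that is not pure bookkeeping is making sure that the formal translation, with every quantifier relativized to $\mathrm{Can}$, matches exactly what Propositions~\ref{prop:F18} and~\ref{prop:F9} establish. This concerns especially the domain-membership of the existential witnesses in $\mathrm{F}_8$ and $\mathrm{F}_9$. If that check needs more detail, I would use the recursive characterization of $\mathrm{Can}$ (sorted top-level sequence with canonical interiors) to show that deleting a top-level block preserves canonicity and that interiors of top-level blocks are canonical.
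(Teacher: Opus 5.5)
Your proposal is correct and follows the paper's own proof. You get $\Qr\intp\Fm$ from Corollary~\ref{cor:QinF}, $\Fm\intp S^1_2$ via $\rho$ by Propositions~\ref{prop:F18} and~\ref{prop:F9}, $\Fm\intp\Qr$ from $S^1_2\mutint\Qr$ and transitivity, and essential undecidability from consistency plus Fact~\ref{fact:transfer}; your explicit check that the existential witnesses in $\mathrm{F}_8$ and $\mathrm{F}_9$ lie in $\mathrm{Can}$ is a sound elaboration of what the paper already builds into Proposition~\ref{prop:F9} and Lemma~\ref{lem:book}(i).
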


\begin{proof}
By Propositions~\ref{prop:F18} and~\ref{prop:F9}, $\rho$ is an interpretation
of $\Fm$ in $S^1_2$, so $\Fm\intp S^1_2$; with $S^1_2\mutint\Qr$
(Theorem~\ref{thm:chains}) and transitivity, $\Fm\intp\Qr$. The converse is
Corollary~\ref{cor:QinF}, whose proof also delivers essential undecidability.
\end{proof}

\begin{corollary}[The $\Qr$-cluster]\label{cor:Qcluster}
$\Fm$ is mutually interpretable with each of $\TT$, $\QTp$, $\TC$, $\AST$,
$\AST+\mathrm{EXT}$, and $S^1_2$.
\end{corollary}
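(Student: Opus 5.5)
This follows by combining Theorem~\ref{thm:B} with the first chain of Theorem~\ref{thm:chains}, using that mutual interpretability is an equivalence relation. It is reflexive because interpretability is reflexive. It is symmetric by definition, since $U\mutint V$ means $U\intp V$ and $V\intp U$. It is transitive because interpretability is transitive: if $U\intp V\intp W$ and $W\intp V\intp U$, then $U\intp W$ and $W\intp U$.

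The argument proceeds as follows. Theorem~\ref{thm:B} gives $\Fm\mutint\Qr$. The first displayed chain of Theorem~\ref{thm:chains} gives
\[
\TT\mutint\QTp\mutint\TC\mutint\Qr\mutint\AST\mutint\AST{+}\mathrm{EXT}\mutint S^1_2 .
\]
So each listed theory $X\in\{\TT,\QTp,\TC,\AST,\AST{+}\mathrm{EXT},S^1_2\}$ satisfies $X\mutint\Qr$, reached from $\Qr$ by finitely many links of the chain and transitivity. By symmetry $\Qr\mutint X$, and composing with $\Fm\mutint\Qr$ yields $\Fm\mutint X$.

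For $\TT$ there is also a direct route: Theorem~\ref{thm:TinF} gives $\TT\intp\Fm$ by a direct interpretation. Conversely, $\Fm\intp\Qr\intp\TT$, using Theorem~\ref{thm:B} and then Theorem~\ref{thm:QT}. The same pattern holds for $S^1_2$: Section~\ref{sec:thmB} gives $\Fm\intp S^1_2$ directly, and $S^1_2\intp\Qr\intp\Fm$.

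No step is a real obstacle; the only point needing care is citing the chain exactly as stated in Theorem~\ref{thm:chains}, as the paper requires. Everything else is bookkeeping with transitivity.
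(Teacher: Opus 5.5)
Your proposal is correct and is the paper's proof: Theorem~\ref{thm:B} combined with the first chain of Theorem~\ref{thm:chains}, using that mutual interpretability is an equivalence relation. The extra direct routes for $\TT$ and $S^1_2$ are valid but not needed.
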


\begin{proof}
Theorem~\ref{thm:B} and the first chain of Theorem~\ref{thm:chains}.
\end{proof}

\begin{remark}
Theorems A and B, with Corollary~\ref{cor:closing}, place the pair
$(\WFm,\Fm)$ exactly as announced: the schematic theory of hereditarily finite
multisets lies in the mutual-inter\-pretability class of $\Rr$, the finitely
axiomatized theory in that of $\Qr$, and the two classes are separated by
Corollary~\ref{cor:closing}(ii). Hereditarily finite multisets thereby join
numbers, strings, trees, sets, and sequences in both clusters.
\end{remark}

\section{Minimality relative to the axiomatization}\label{sec:minimality}

By \cite{MPV24} there is no minimal recursively enumerable essentially
undecidable theory with respect to interpretability; minimality claims are
therefore meaningful only relative to a fixed axiomatization. The reference
pattern is Theorem~11 of \cite[Ch.~II]{TMR53}: every theory axiomatized by a
proper subset of the seven axioms of $\Qr$ has a consistent decidable
extension. Analogous results were obtained for concatenation theories by
Murwanashyaka \cite{Mur22} and by Higuchi and Horihata \cite{HH14}, and, for
$\Rr$, related results of Cobham are reported in \cite{JS83} (cf.\
\cite{MPV24}). This section establishes the corresponding profile for $\Fm$:
each of the structural axioms $\mathrm{F}_4$--$\mathrm{F}_8$ is certified by a
decidable witness (Theorem~\ref{thm:indep}); the containment axiom
$\mathrm{F}_9$ is conservative over $\mathrm{F}_1$--$\mathrm{F}_8$, hence
carries no algebraic content requiring certification
(Proposition~\ref{prop:cons}); and $\mathrm{F}_1$--$\mathrm{F}_3$ delimit the
subject matter (Remark~\ref{rem:frame}). For $i\in\{4,\dots,8\}$ we write
$\Fm{-}\mathrm{F}_i$ for the theory axiomatized by the remaining eight axioms
of Figure~\ref{fig:F}.

\subsection{Independence of the structural axioms}

\begin{figure}[t]
\centering
\fbox{\begin{minipage}{0.96\textwidth}
\centering\medskip
\begin{tabular}{c@{\quad}c@{\quad}c@{\quad}c@{\quad}c@{\quad}c@{\quad}c}
 & domain & $\emptyset$ & $\enc{x}$ & $x\U y$ & $x\sq t$ iff & fails\\[2pt]
$M_4$ & $\mathbb{N}$ & $0$ & $x+1$ & $\max(x,y)$ & $x\le t$ & $\mathrm{F}_4$\\
$M_5$ & $\{0\}$ & $0$ & $0$ & $0$ & $x=t=0$ & $\mathrm{F}_5$\\
$M_6$ & $\mathbb{Z}$ & $0$ & $2x+1$ & $x+y$ & always & $\mathrm{F}_6$\\
$M_7$ & $\mathbb{N}$ & $0$ & $1$ & $x+y$ & $t\ge 1\vee(x{=}0\wedge t{=}0)$
 & $\mathrm{F}_7$\\
$M_8$ & $\mathbb{N}$ & $0$ & $x+1$ & $x+y$ & $x\le t$ & $\mathrm{F}_8$\\[2pt]
\end{tabular}
\medskip
\end{minipage}}
\caption{The five witnesses. Each column lists the interpretation of the
$L$-symbols.}\label{fig:witnesses}
\end{figure}

\begin{lemma}[Decidability transfer]\label{lem:transfer}
Let $N$ be a structure whose domain and whose interpretations of all symbols
are first-order definable without parameters in a structure $M$, or, more
generally, let $N$ be parameter-free interpretable in $M$. If the complete
theory of $M$ is decidable, then so is the complete theory of $N$.
\end{lemma}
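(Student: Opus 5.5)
The plan is to reduce the complete theory of $N$ to that of $M$ via a computable translation of sentences. Fix the parameter-free interpretation: a formula $\delta(\bar x)$ of the language of $M$ (in some fixed dimension $d$, with $d=1$ in the definable case) defining the domain; a formula $\varepsilon(\bar x,\bar y)$ defining the equivalence relation that plays the role of equality (plain equality in the definable case); and, for each symbol of $N$, a defining formula. Function symbols are treated via their graphs, and the finite list of defining formulas is hard-coded. By the standard unnesting procedure, every sentence $\varphi$ of the language of $N$ is effectively equivalent to a sentence whose atomic subformulas are of the forms $x=y$, $f(\bar x)=y$, $c=y$, or $R(\bar x)$. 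We define $\varphi^{*}$ by recursion on this form: each atom is replaced by its defining formula over $d$-tuples of variables, connectives commute, and quantifiers are relativized to $\delta$ with block variables. The map $\varphi\mapsto\varphi^{*}$ is computable because it is a syntactic recursion using finitely many fixed formulas.

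The key step is the translation lemma: for every $N$-formula $\psi(x_1,\dots,x_n)$ and all tuples $\bar a_1,\dots,\bar a_n$ satisfying $\delta$ in $M$, we have $N\models\psi([\bar a_1],\dots,[\bar a_n])$ iff $M\models\psi^{*}(\bar a_1,\dots,\bar a_n)$. Here $[\bar a]$ is the element of $N$ represented by $\bar a$, that is, its $\varepsilon$-class under the identification of $N$ with the quotient structure. This goes by induction on $\psi$. The atomic case holds by the definition of the interpretation, which also requires that the defining formulas be $\varepsilon$-invariant and that the graphs be functional modulo $\varepsilon$; these facts are exactly what ``$N$ is interpretable in $M$'' provides. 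The connective cases are trivial. The quantifier case uses that every element of $N$ is $[\bar a]$ for some $\bar a$ with $M\models\delta(\bar a)$.

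Specializing the lemma to sentences gives $\varphi\in\mathrm{Th}(N)$ iff $\varphi^{*}\in\mathrm{Th}(M)$. Composing the computable map $\varphi\mapsto\varphi^{*}$ with a decision procedure for $\mathrm{Th}(M)$ then decides $\mathrm{Th}(N)$.

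The only real obstacle is bookkeeping in the atomic case under a nontrivial $\varepsilon$. There I would rely on the unnested normal form so that function terms never need to be composed inside the translation. Parameter-freeness is used essentially: parameters would make $\varphi^{*}$ mention constants outside the language of $M$, and the reduction to $\mathrm{Th}(M)$ would then fail.
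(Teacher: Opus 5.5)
Your proposal is correct and follows the same route as the paper: an effective translation $\varphi\mapsto\varphi^{*}$ with $N\models\varphi$ iff $M\models\varphi^{*}$, composed with a decision procedure for $\mathrm{Th}(M)$. You additionally spell out what the paper's one-line proof leaves implicit, namely unnesting, relativization to $\delta$, and $\varepsilon$-invariance in the translation lemma, including the multidimensional quotient case that the paper actually needs for $M_6$.
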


\begin{proof}
The defining formulas induce an effective translation $\varphi\mapsto
\varphi^{*}$ with $N\models\varphi$ iff $M\models\varphi^{*}$; decide
$\varphi^{*}$ in $\mathrm{Th}(M)$.
\end{proof}

\begin{theorem}[Independence of $\mathrm{F}_4$--$\mathrm{F}_8$]\label{thm:indep}
For each $i\in\{4,\dots,8\}$, the structure $M_i$ of
Figure~\ref{fig:witnesses} satisfies every axiom of $\Fm$ except
$\mathrm{F}_i$, refutes $\mathrm{F}_i$, and has a decidable complete
first-order theory. Consequently: \textup{(i)} $\mathrm{F}_i$ is not derivable
in $\Fm{-}\mathrm{F}_i$; \textup{(ii)} $\mathrm{Th}(M_i)$ is a consistent
decidable complete extension of $\Fm{-}\mathrm{F}_i$, so
$\Fm{-}\mathrm{F}_i$ is not essentially undecidable. The axiomatization of
$\Fm$ is thus minimal at each structural axiom, in the sense of
\cite[Ch.~II, Thm.~11]{TMR53} and \cite{HH14}.
\end{theorem}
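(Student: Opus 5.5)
The plan is to verify the five structures of Figure~\ref{fig:witnesses} one at a time, checking $\mathrm{F}_1$--$\mathrm{F}_9$ directly against the displayed interpretations. Decidability of each $\mathrm{Th}(M_i)$ will come from Lemma~\ref{lem:transfer}, and the two stated consequences are then formal. For (i): $M_i\models\Fm{-}\mathrm{F}_i$ and $M_i\not\models\mathrm{F}_i$, so soundness of first-order logic shows $\mathrm{F}_i$ is not derivable. For (ii): $\mathrm{Th}(M_i)$ is consistent (it has a model), complete, contains $\Fm{-}\mathrm{F}_i$, and is decidable.

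\emph{Checking the structures.} In each $M_i$ the monoid laws $\mathrm{F}_1$--$\mathrm{F}_3$ are immediate, since $(\mathbb{N},\max,0)$, $(\mathbb{Z},+,0)$, $(\mathbb{N},+,0)$ and the trivial monoid are commutative monoids. The remaining checks, with the refuting instances, are as follows.
\begin{itemize}
\item $M_4$ ($\max$): cancellation fails, since $\max(0,2)=\max(1,2)$. $\mathrm{F}_5$--$\mathrm{F}_7$ are clear. For $\mathrm{F}_8$: from $\max(u,v)=\max(x{+}1,w)\ge x{+}1$, one of $u,v$, say $u$, is $\ge x{+}1$, and $w':=u$ gives $u=\max(x{+}1,w')$.
\item $M_5$: the one-point structure. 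It refutes $\mathrm{F}_5$ and trivially satisfies everything else; $\mathrm{F}_9$ holds because $x=t$ always.
\item $M_6$: $\mathbb{Z}$ with $\enc{x}=2x+1$. Positivity fails, since $1+(-1)=0$. Cancellation holds in a group, and $2x+1\neq 0$ holds in $\mathbb{Z}$ by parity. For $\mathrm{F}_8$ take $w':=u-(2x+1)$.
\item $M_7$: $\mathbb{N}$ with constant singleton $1$. Injectivity fails, since $\enc{0}=\enc{1}$. For $\mathrm{F}_8$: from $u+v=1+w$, one of $u,v$ is $\ge1$; subtract $1$ to get $w'$.
\item $M_8$: $\mathbb{N}$ with $\enc{x}=x+1$. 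The Levi property fails at $u=v=1$, $x=1$: $1+1=\enc{1}+0$, yet neither $u$ nor $v$ has the form $2+w'$. Cancellation, $\enc{x}\neq0$, positivity and injectivity hold in $\mathbb{N}$.
\end{itemize}

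\emph{The containment axiom $\mathrm{F}_9$.} This is the step I expect to need the most care, because its right-hand side quantifies over decompositions $t=w\U\enc{u}$.
\begin{itemize}
\item $M_4$ and $M_8$ ($\sq$ is $\le$). For $x<t$, take $u:=t-1$ and $w:=0$. Conversely, $t=w\U(u{+}1)$ forces $u<t$ (under $\max$ or $+$), so $x\le u$ gives $x\le t$.
\item $M_6$ ($\sq$ is total). The right side always holds via $u:=0$, $w:=t-1$.
\item $M_7$. If $t\ge1$, both sides hold: on the right take $w:=t-1$ and $u:=1$, noting $x\sq 1$ holds for every $x$. If $t=0$, no decomposition $0=w+1$ exists, so both sides reduce to $x=0$.
\end{itemize}

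\emph{Decidability.} $M_5$ is finite, so its theory is decidable. $M_4$, $M_7$ and $M_8$ are parameter-free definable in $(\mathbb{N};0,1,+)$, whose theory is Presburger's and decidable~\cite{Pre29}: $\max$, $\le$ and $x+1$ are definable there. $M_6$ is definable in $(\mathbb{Z};0,1,+)$, which is decidable; alternatively it is parameter-free interpretable in Presburger arithmetic via pairs of naturals modulo difference. Lemma~\ref{lem:transfer} then transfers decidability in each case.
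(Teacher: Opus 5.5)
Your proposal is correct and matches the paper's proof: the same structure-by-structure check of $\mathrm{F}_1$--$\mathrm{F}_9$ with the same witnesses for $\mathrm{F}_8$ and $\mathrm{F}_9$, and decidability from Presburger arithmetic via Lemma~\ref{lem:transfer}, with $M_6$ handled by the difference construction. Your refuting instances for $M_4$ ($\max(0,2)=\max(1,2)$) and $M_8$ ($u=v=x=1$, $w=0$) differ from the paper's but are equally valid.
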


\begin{proof}
\emph{Decidability.} $M_5$ is finite in a finite language, so
$\mathrm{Th}(M_5)$ is decidable by direct evaluation. In Presburger arithmetic
$(\mathbb{N},+)$ the following are parameter-free definable: $0$ (the unique
$x$ with $x+x=x$), the order ($x\le y\leftrightarrow\exists z\,(x+z=y)$), $1$
(the $\le$-least nonzero element), the successor ($y=x+1$), and
$\max$ (via $\le$); hence $M_4$, $M_7$ and $M_8$ are parameter-free definable
in $(\mathbb{N},+)$, whose theory is decidable \cite{Pre29}. The group
$(\mathbb{Z},+)$ with the constant $1$ is parameter-free interpretable in
$(\mathbb{N},+)$ by the standard difference construction (pairs modulo the
definable equivalence $(a,b)\sim(c,d):\Leftrightarrow a+d=b+c$, with
componentwise addition and $1$ represented by $(1,0)$), and the map
$x\mapsto 2x+1$ is definable from $+$ and $1$; hence $M_6$ is parameter-free
interpretable in $(\mathbb{N},+)$. In all cases Lemma~\ref{lem:transfer}
applies.

\emph{The structure $M_4$.} $\max$ is associative and commutative with
neutral element $0$ on $\mathbb{N}$, giving
$\mathrm{F}_1$--$\mathrm{F}_3$; $\mathrm{F}_5$--$\mathrm{F}_7$ are immediate
for the successor. $\mathrm{F}_8$: suppose
$\max(u,v)=\max(x{+}1,w)=:m$; then $m\ge x{+}1$, and $u=m$ or $v=m$; if
$u=m$, then $\max(x{+}1,u)=u$, so $w':=u$ witnesses the first disjunct (the
other case is symmetric). $\mathrm{F}_9$ with $\sq\,=\,\le$: for
($\Leftarrow$), if $t=\max(w,u{+}1)$ and $x\le u$ then $t\ge u{+}1>u\ge x$;
for ($\Rightarrow$), if $x\le t$ and $x\neq t$ then $x<t$, so $t\ge 1$, and
$u:=t{-}1$, $w:=0$ give $t=\max(0,u{+}1)$ with $x\le u$. $\mathrm{F}_4$
fails: $\max(1,2)=\max(2,2)$ but $1\neq 2$.

\emph{The structure $M_5$.} All operations are constant $0$, so every
equation between terms holds; the hypotheses and conclusions of
$\mathrm{F}_4$, $\mathrm{F}_6$--$\mathrm{F}_8$ are true outright (in
$\mathrm{F}_8$, $w':=0$), and both sides of $\mathrm{F}_9$ are true at the
only point $(0,0)$. $\mathrm{F}_5$ fails: $\enc{0}=0=\emptyset$.

\emph{The structure $M_6$.} $(\mathbb{Z},+,0)$ is an abelian group, giving
$\mathrm{F}_1$--$\mathrm{F}_4$; $2x{+}1$ is odd, hence nonzero
($\mathrm{F}_5$), and injective ($\mathrm{F}_7$). $\mathrm{F}_8$ holds
unconditionally: given the hypothesis, $w':=u-(2x{+}1)$ satisfies
$u=(2x{+}1)+w'$, so the first disjunct always holds. $\mathrm{F}_9$ with the
total relation: the left side is true for all $x,t$; the right side is true
as well, since $u:=0$, $w:=t{-}1$ give $t=w+(2\cdot 0{+}1)$ and $x\sq 0$
holds. $\mathrm{F}_6$ fails: $1+(-1)=0$ but $1\neq 0$. We note the
instructive converse: in a structure where every equation $u=a+w'$ is
solvable, the Levi axiom is vacuous; positivity $\mathrm{F}_6$ is precisely
what excludes such solvability in the intended model.

\emph{The structure $M_7$.} $\mathrm{F}_1$--$\mathrm{F}_4$ and $\mathrm{F}_6$
hold in $(\mathbb{N},+,0)$; $\enc{x}=1\neq 0$ gives $\mathrm{F}_5$.
$\mathrm{F}_8$: from $u+v=1+w$ we get $u+v\ge 1$, so $u\ge 1$ or $v\ge 1$;
if $u\ge 1$, then $w':=u-1$ gives $u=1+w'=\enc{x}\U w'$ (note
$\enc{x}=1$ for every $x$). $\mathrm{F}_9$: for $t=0$, both sides reduce to
$x=0$, the existential disjunct being refuted by $0\neq w+1$ in
$\mathbb{N}$; for $t\ge 1$, the left side is true, and so is the right, with
witnesses $w:=t{-}1$ and $u:=1$, since $t=(t{-}1)+\enc{1}$ and $x\sq 1$
holds for all $x$. $\mathrm{F}_7$ fails: $\enc{0}=\enc{1}=1$ but
$0\neq 1$.

\emph{The structure $M_8$.} $\mathrm{F}_1$--$\mathrm{F}_7$ hold as in
$(\mathbb{N},+)$ with the successor. $\mathrm{F}_9$ with $\sq\,=\,\le$: for
($\Leftarrow$), $t=w+(u{+}1)\ge u{+}1>u\ge x$; for ($\Rightarrow$), if
$x<t$ then $u:=t{-}1$, $w:=0$ give $t=0+(u{+}1)$ and $x\le u$.
$\mathrm{F}_8$ fails: $2+2=\enc{2}+1$, i.e.\ the hypothesis holds with
$u=v=2$, $x=2$, $w=1$, but $2=3+w'$ has no solution in $\mathbb{N}$.
\end{proof}

\subsection{Conservativity of the containment axiom}

\begin{proposition}\label{prop:cons}
Every $\{\emptyset,\enc{\cdot},\U\}$-structure satisfying
$\mathrm{F}_1$--$\mathrm{F}_8$ admits an expansion to a model of $\Fm$.
Consequently, $\Fm$ is conservative over $\mathrm{F}_1$--$\mathrm{F}_8$ for
$\sq$-free sentences: if $\Fm\vdash\varphi$ and $\varphi$ does not contain
$\sq$, then $\mathrm{F}_1\text{--}\mathrm{F}_8\vdash\varphi$.
\end{proposition}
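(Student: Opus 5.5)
Given a structure $N=(N;\emptyset,\enc{\cdot},\U)$ satisfying $\mathrm{F}_1$--$\mathrm{F}_8$, the plan is to define $\sq^N$ as a fixpoint of the operator that $\mathrm{F}_9$ describes, using the Knaster--Tarski theorem cited in the introduction. Let $\mathcal{R}=\mathcal{P}(N\times N)$, ordered by inclusion, and define $\Phi\colon\mathcal{R}\to\mathcal{R}$ by
\[
\Phi(R):=\{(x,t): x=t\ \vee\ \exists w,u\in N\,(t=w\U\enc{u}\wedge (x,u)\in R)\}.
\]
$\Phi$ is monotone, since $R$ occurs only positively. Hence, by Knaster--Tarski, $\Phi$ has a fixpoint; concretely the least one $R_0=\bigcap\{R:\Phi(R)\subseteq R\}$ will do. Then interpret $x\sq t$ as $(x,t)\in R_0$. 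The equation $\Phi(R_0)=R_0$ is, read pointwise, precisely the biconditional $\mathrm{F}_9$ evaluated in the expansion. The other axioms $\mathrm{F}_1$--$\mathrm{F}_8$ do not mention $\sq$, so they remain true in the expansion. This gives the first claim.

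No well-foundedness of $N$ is needed. This matters, because the structure may be nonstandard: it may contain elements $t=w\U\enc{t}$, or infinite descending chains. In such cases a rank recursion is unavailable, but a fixpoint still exists. Least versus greatest fixpoint does not matter for $\mathrm{F}_9$, because the axiom only demands a fixpoint. I would remark that even the full relation is a fixpoint exactly when every $t\neq x$ decomposes, which fails in general. So the lattice-theoretic argument is the clean route, and I expect no genuine obstacle beyond stating monotonicity carefully.

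For conservativity, suppose $\mathrm{F}_1\text{--}\mathrm{F}_8\nvdash\varphi$ with $\varphi$ a $\sq$-free sentence. By completeness there is a model $N\models\mathrm{F}_1\text{--}\mathrm{F}_8+\neg\varphi$. Take its reduct to $\{\emptyset,\enc{\cdot},\U\}$, which is already the whole structure since $\mathrm{F}_1$--$\mathrm{F}_8$ are $\sq$-free. Expand it as above to $N^{+}\models\Fm$. Since $\varphi$ does not contain $\sq$, its truth value is unchanged by the expansion, so $N^{+}\models\neg\varphi$. Therefore $\Fm\nvdash\varphi$ by soundness, which is the contrapositive of the second claim.
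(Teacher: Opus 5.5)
Your proposal is correct and follows the paper's proof essentially step for step. Both take the least fixed point, via Knaster--Tarski, of the monotone operator that $\mathrm{F}_9$ describes on the lattice of binary relations, note that $\mathrm{F}_1$--$\mathrm{F}_8$ are unaffected by the expansion, and obtain conservativity by expanding a model of $\mathrm{F}_1$--$\mathrm{F}_8+\neg\varphi$.

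One side remark is understated: the full relation is in fact never a fixed point in a model of $\mathrm{F}_1$--$\mathrm{F}_8$, not merely in general. This is because $\emptyset\neq\enc{\emptyset}$ by $\mathrm{F}_5$, while $\emptyset$ admits no decomposition $w\U\enc{u}$ by $\mathrm{F}_2$, $\mathrm{F}_5$ and $\mathrm{F}_6$.
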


\begin{proof}
Let $M_0\models\mathrm{F}_1$--$\mathrm{F}_8$ and define, on the complete
lattice of binary relations on $M_0$ ordered by inclusion, the operator
\[
\Gamma(R)\ :=\ \{(x,t)\ :\ x=t\ \vee\ \exists w,u\,
\bigl(t=w\U\enc{u}\wedge(x,u)\in R\bigr)\}.
\]
$R$ occurs only positively, so $\Gamma$ is monotone, and by the
Knaster--Tarski theorem \cite{Tar55} it has a least fixed point
$\sq^{*}$. The axiom $\mathrm{F}_9$ asserts precisely that $\sq$ is a fixed
point of $\Gamma$, so $(M_0,\sq^{*})\models\Fm$, the axioms
$\mathrm{F}_1$--$\mathrm{F}_8$ being unaffected by the expansion.
Conservativity follows by contraposition: a model of
$\mathrm{F}_1$--$\mathrm{F}_8+\neg\varphi$ expands to a model of
$\Fm+\neg\varphi$.
\end{proof}

\begin{remark}\label{rem:intended}
In the intended reduct $(\HFM;\emptyset,\enc{\cdot},\U)$ the least fixed
point $\sq^{*}$ coincides with the containment relation of
Section~\ref{sec:theories}. Indeed, the rank-recursive definition of $\sq$
makes it a fixed point of $\Gamma$; and $\sq$ is contained in every fixed
point $R$, by induction on the rank of $t$: if $x\sq t$ with $x\neq t$,
there is $u$ with $u\in_1 t$, of smaller rank, and $x\sq u$; the induction
hypothesis gives $(x,u)\in R$, whence $(x,t)\in\Gamma(R)=R$. Thus
$\mathrm{F}_9$ names the canonical relation and, by
Proposition~\ref{prop:cons}, constrains the algebraic reduct not at all.
\end{remark}

\subsection{The delimiting axioms}

\begin{remark}\label{rem:frame}
The monoid axioms $\mathrm{F}_1$--$\mathrm{F}_3$ play a different role from
the structural axioms: they fix the algebraic regime within which the
question is posed, namely that juxtaposition forms multisets --- unordered,
ungrouped, with an empty aggregate. Weakening this regime does not weaken the
theory but changes the subject: without commutativity and associativity the
finitary aggregates are strings, sequences, and ordered trees, and the
interpretability profile of their weak theories is exactly the charted
territory of
\cite{Grz05,GZ08,HH14,KM20,Dam22,Dam23,Mur22,Mur24,KM24}. Within the regime
fixed by $\mathrm{F}_1$--$\mathrm{F}_3$, Theorems~\ref{thm:A}
and~\ref{thm:B} together with Theorem~\ref{thm:indep} give the complete
strength profile of the remaining axioms.
\end{remark}

\section{The calculus of indications}\label{sec:calculus}

The intended model $\mm$ has an independent pedigree. The \emph{forms} of
Spencer-Brown's calculus of indications \cite{SB69} are the expressions
generated from the empty expression by enclosure and juxtaposition; their
formal study includes the completeness and decidability of the primary
algebra \cite{Ban77,Meg03}, Varela's extended calculus \cite{Var75}, and the
waveform reading of re-entry \cite{KV80}. This section identifies the forms,
modulo the congruence appropriate to spatial juxtaposition, with hereditarily
finite multisets (Theorems~\ref{thm:ident1} and~\ref{thm:ident2}), records
the interpretability profile of the condensed variant
(Proposition~\ref{prop:condensed}), shows that bounded re-entry remains
within a decidable regime (Proposition~\ref{prop:reentry}), and assembles the
resulting boundary map. The section depends on
Sections~\ref{sec:prelim}--\ref{sec:theories} for definitions, quotes
Theorems~\ref{thm:A} and~\ref{thm:B} only in its final paragraph, and nothing
elsewhere in the paper depends on it.

\subsection{Forms and the identification theorems}
Let $\mathrm{Fm}_0$ be the smallest set of expressions containing the empty
expression $\epsilon$ and closed under enclosure $f\mapsto\enc{f}$ and
juxtaposition $(f,g)\mapsto fg$. Expressions are written in a common space:
juxtaposition carries neither order nor grouping, and the empty expression is
an expression. Accordingly, let $\approx$ be the congruence on
$\mathrm{Fm}_0$ (with respect to both operations) generated by
\[
(fg)h\approx f(gh),\qquad fg\approx gf,\qquad f\epsilon\approx f .
\]
The \emph{subform} relation is defined recursively, mirroring $\mathrm{F}_9$:
$f$ is a subform of $g$ iff $f\approx g$, or $g\approx h\enc{u}$ for some
$h,u$ such that $f$ is a subform of $u$. The \emph{depth} of a form is the
maximal nesting of enclosures; it is invariant under $\approx$, since the
generating relations preserve the enclosure structure at every level.

\begin{lemma}\label{lem:constituents}
Every form is $\approx$-equivalent to a juxtaposition
$\enc{f_1}\cdots\enc{f_k}$ of enclosed forms, $k\ge 0$ (the case $k=0$ being
$\epsilon$), with each $f_i$ of strictly smaller depth.
\end{lemma}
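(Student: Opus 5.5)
The plan is structural induction on the generation of $\mathrm{Fm}_0$, proving simultaneously the existence of the decomposition and the depth bound. The depth is defined syntactically on expressions: $\mathrm{depth}(\epsilon)=0$, $\mathrm{depth}(\enc{f})=\mathrm{depth}(f)+1$, and $\mathrm{depth}(fg)=\max(\mathrm{depth}(f),\mathrm{depth}(g))$. I also use the invariance of depth under $\approx$ stated just before the lemma. What we prove is the following claim: for every $f\in\mathrm{Fm}_0$ there are $k\ge0$ and forms $f_1,\dots,f_k$ with $f\approx\enc{f_1}\cdots\enc{f_k}$ and $\mathrm{depth}(f_i)<\mathrm{depth}(f)$ for all $i$. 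Here the juxtaposition $\enc{f_1}\cdots\enc{f_k}$ is read with some fixed bracketing, say right-nested; by associativity the choice does not matter up to $\approx$.

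\emph{Base case.} For $f=\epsilon$, take $k=0$. The condition on the $f_i$ is vacuous.

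\emph{Enclosure case.} For $f=\enc{g}$, take $k=1$ and $f_1:=g$. Then $\mathrm{depth}(g)=\mathrm{depth}(f)-1<\mathrm{depth}(f)$, and no use of $\approx$ is needed.

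\emph{Juxtaposition case.} For $f=gh$, the induction hypothesis gives $g\approx\enc{g_1}\cdots\enc{g_m}$ and $h\approx\enc{h_1}\cdots\enc{h_n}$, each with the depth bound. Because $\approx$ is a congruence for juxtaposition, $f\approx(\enc{g_1}\cdots\enc{g_m})(\enc{h_1}\cdots\enc{h_n})$. Associativity then re-brackets this into the standard form $\enc{g_1}\cdots\enc{g_m}\enc{h_1}\cdots\enc{h_n}$. The degenerate cases $m=0$ or $n=0$ are handled by the neutrality law $f\epsilon\approx f$, together with commutativity to obtain $\epsilon f\approx f$. For the depth bound, each $g_i$ has depth less than $\mathrm{depth}(g)\le\max(\mathrm{depth}(g),\mathrm{depth}(h))=\mathrm{depth}(f)$, and similarly for each $h_j$.

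The only step needing care is the re-bracketing in the juxtaposition case. This is a small generalized-associativity argument: by an auxiliary induction on $m$, one shows $(\enc{g_1}\cdots\enc{g_m})\,X\approx\enc{g_1}(\enc{g_2}\cdots\enc{g_m}\,X)$ using $(fg)h\approx f(gh)$. With the right-nested convention this goes through directly. No obstacle beyond this bookkeeping is expected.

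The depth bound is stated for the syntactic depth of the chosen $f_i$. This is consistent with the paper's assertion that depth is $\approx$-invariant, so the lemma's claim "strictly smaller depth" holds for any representative of $f$.
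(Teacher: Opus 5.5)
Your proposal is correct and is essentially the paper's proof: structural induction over the three generating clauses, with congruence and associativity handling the juxtaposition case. You are slightly more explicit than the paper in two places. You invoke neutrality (with commutativity) when one factor list is empty. You also carry the depth bound syntactically through the induction, whereas the paper reads it off afterwards from the $\approx$-invariance of depth.
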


\begin{proof}
Structural induction: $\epsilon$ is the empty juxtaposition; $\enc{f}$ is
such a juxtaposition with $k=1$; and if
$f\approx\enc{f_1}\cdots\enc{f_k}$ and $g\approx\enc{g_1}\cdots\enc{g_l}$,
then $fg\approx\enc{f_1}\cdots\enc{f_k}\enc{g_1}\cdots\enc{g_l}$ by
congruence and associativity. The depth claim is immediate, the depth of
$\enc{f_i}$ being $\mathrm{depth}(f_i)+1$.
\end{proof}

\begin{theorem}[Identification, de-condensed]\label{thm:ident1}
The recursion $\iota(\epsilon):=\emptyset$,
$\iota(\enc{f}):=\enc{\iota(f)}$, $\iota(fg):=\iota(f)\U\iota(g)$ induces an
isomorphism of $\{\emptyset,\enc{\cdot},\U\}$-algebras
\[
\iota\colon\ \mathrm{Fm}_0/{\approx}\ \xrightarrow{\ \cong\ }\
(\HFM;\,\emptyset,\enc{\cdot},\U),
\]
which moreover carries the subform relation to the containment relation
$\sq$.
\end{theorem}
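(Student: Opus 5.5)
The plan is to reduce the statement to the Normal Form Lemma~\ref{lem:nf}, by identifying forms with closed $L$-terms. There is an evident bijection between $\mathrm{Fm}_0$ and closed $L$-terms: $\epsilon\mapsto\emptyset$, enclosure $\mapsto\enc{\cdot}$, juxtaposition $\mapsto\U$, with grouping recorded by the parse. Under this bijection the recursion defining $\iota$ is exactly evaluation in $\mm$, so $\iota(f)=\mathrm{val}(f)$. Likewise $\approx$ is the congruence generated by the equational axioms $\mathrm{F}_1$--$\mathrm{F}_3$. For the symmetric versions (e.g.\ $\epsilon f\approx f$) one uses commutativity. Hence $f\approx g$ iff $\mathrm{F}_1\text{--}\mathrm{F}_3\vdash f=g$, by completeness of equational logic, i.e.\ provability is the generated congruence. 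By Lemma~\ref{lem:nf}, (iii)$\Leftrightarrow$(i), this holds iff $\mathrm{val}(f)=\mathrm{val}(g)$.

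This gives the algebraic part at once. \emph{Well-definedness} of $\iota$ on classes is the direction $\approx\Rightarrow$ equal values. Concretely, each generating relation holds in $\mm$ because $(\HFM,\U,\emptyset)$ is a commutative monoid, and the kernel of a homomorphism is a congruence. \emph{Injectivity} is the converse direction. \emph{Homomorphism} is immediate from the defining recursion. \emph{Surjectivity} goes by induction on rank: an $m\in\HFM$ of rank $n$ is $[m_1,\dots,m_k]$ with each $m_i$ of smaller rank. Choose forms $f_i$ with $\iota(f_i)=m_i$; then $\iota(\enc{f_1}\cdots\enc{f_k})=m$, using Lemma~\ref{lem:constituents}'s shape, with $k=0$ giving $\epsilon$.

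For the subform relation, I show that $f$ is a subform of $g$ iff $\iota(f)\sq\iota(g)$, by induction on the depth of $g$. This is legitimate since depth is $\approx$-invariant and the subform clause passes to $u$ with $g\approx h\enc{u}$, whence $\mathrm{depth}(u)<\mathrm{depth}(g)$. For ($\Rightarrow$) there are two cases. If $f\approx g$, the values agree. If $g\approx h\enc{u}$, then $\iota(g)=\iota(h)\U\enc{\iota(u)}$, so $\iota(u)\in_1\iota(g)$, and the induction hypothesis gives $\iota(f)\sq\iota(u)$, whence $\iota(f)\sq\iota(g)$ by the recursive definition of $\sq$. For ($\Leftarrow$), suppose $\iota(f)\sq\iota(g)$. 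Either $\iota(f)=\iota(g)$, so $f\approx g$ by injectivity, or there is $u'\in_1\iota(g)$, say $\iota(g)=w'\U\enc{u'}$, with $\iota(f)\sq u'$. Surjectivity gives forms $h,u$ with $\iota(h)=w'$ and $\iota(u)=u'$. Then $\iota(g)=\iota(h\enc{u})$, so $g\approx h\enc{u}$ by injectivity. The depth of $u$ is smaller because it equals the rank of $u'$, which is below that of $\iota(g)$; here one records that depth equals rank of the value, again by induction. The induction hypothesis then makes $f$ a subform of $u$, hence of $g$.

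The main obstacle is the bookkeeping in the first step. One must justify that the congruence generated on $\mathrm{Fm}_0$ coincides with $\mathrm{F}_1$--$\mathrm{F}_3$-provable equality between the corresponding closed terms. This requires viewing juxtaposition as a binary operation with explicit grouping, so that $(fg)h\approx f(gh)$ is meaningful. One should also check that the subform relation is well defined on $\approx$-classes; this follows since its definition only refers to $g$ up to $\approx$. A secondary point is the depth–rank correspondence used to run the induction, which is a routine structural induction via Lemma~\ref{lem:constituents}.
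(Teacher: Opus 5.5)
Your proof is correct, but it departs from the paper's at two points.

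\emph{Injectivity.} The paper argues directly by induction on depth. Via Lemma~\ref{lem:constituents} both forms are juxtapositions of enclosed forms; equal values give equal multisets of constituent values by unique factorization in $\HFM$; and the induction hypothesis plus commutativity reassembles $f\approx g$. You instead identify $\mathrm{Fm}_0$ with the closed $\{\emptyset,\enc{\cdot},\U\}$-terms and $\approx$ with the congruence generated by the closed instances of $\mathrm{F}_1$--$\mathrm{F}_3$. You then read off well-definedness and injectivity from Lemma~\ref{lem:nf}, (i)$\Leftrightarrow$(iii). This shows the theorem to be a corollary of the Normal Form Lemma, at the cost of the bookkeeping you flag. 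The step from $\mathrm{F}_1\text{--}\mathrm{F}_3\vdash f=g$ to $f\approx g$ is best justified by soundness rather than ``completeness'': $\mathrm{Fm}_0/{\approx}$ is itself a model of $\mathrm{F}_1$--$\mathrm{F}_3$, since every element is the class of a closed term and the generators of $\approx$ are exactly the closed instances.

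\emph{The relation.} The paper characterizes the subform relation and $\sq$ as least fixed points of $\Gamma_{\mathrm{Fm}}$ and $\Gamma_{\HFM}$, using Remark~\ref{rem:intended}, and transports the least fixed point along the isomorphism. You read the subform definition as a well-founded recursion on depth and prove ``$f$ is a subform of $g$ iff $\iota(f)\sq\iota(g)$'' by induction on $\mathrm{depth}(g)$. You use surjectivity to produce $h,u$ with $\iota(h\enc{u})=\iota(g)$, and injectivity to conclude $g\approx h\enc{u}$. Your route is more elementary: it needs neither Knaster--Tarski nor Remark~\ref{rem:intended}. It also gives $\approx$-invariance of the subform relation for free, which the paper's transport along $R\mapsto\iota[R]$ uses tacitly. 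The paper's route is shorter and more structural.

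One small point: the depth--rank correspondence you invoke in the ($\Leftarrow$) step is dispensable. As you note at the outset, $g\approx h\enc{u}$ and $\approx$-invariance of depth already give $\mathrm{depth}(u)<\mathrm{depth}(g)$.
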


\begin{proof}
The three generating relations of $\approx$ hold in $\mm$ under $\iota$,
since $\U$ is associative and commutative with neutral element $\emptyset$;
hence $\iota$ is well defined on $\approx$-classes and is a homomorphism.
\emph{Surjectivity}, by induction on rank: $\emptyset=\iota(\epsilon)$, and
if $m=[m_1,\dots,m_k]$ with $m_i=\iota(f_i)$, then
$m=\iota(\enc{f_1}\cdots\enc{f_k})$. \emph{Injectivity}, by induction on
depth: let $\iota(f)=\iota(g)$; by Lemma~\ref{lem:constituents},
$f\approx\enc{f_1}\cdots\enc{f_k}$ and $g\approx\enc{g_1}\cdots\enc{g_l}$,
so $[\iota(f_1),\dots,\iota(f_k)]=[\iota(g_1),\dots,\iota(g_l)]$ as
multisets. Hence $k=l$, and there is a bijection $\sigma$ of indices with
$\iota(f_i)=\iota(g_{\sigma(i)})$. The $f_i$ and $g_j$ have strictly smaller
depth, so the induction hypothesis gives $f_i\approx g_{\sigma(i)}$ for all
$i$, and commutativity with congruence reassembles
$f\approx\enc{f_1}\cdots\enc{f_k}
\approx\enc{g_{\sigma(1)}}\cdots\enc{g_{\sigma(k)}}
\approx\enc{g_1}\cdots\enc{g_l}\approx g$.
\emph{The relation}: the subform relation and $\sq$ are the least fixed
points of the monotone operators
\begin{gather*}
\Gamma_{\mathrm{Fm}}(R):=\{(f,g):f\approx g\ \vee\ \exists h,u\,
(g\approx h\enc{u}\wedge(f,u)\in R)\},\\
\Gamma_{\HFM}(S):=\{(x,t):x=t\ \vee\ \exists w,u\,
(t=w\U\enc{u}\wedge(x,u)\in S)\},
\end{gather*}
the former by definition, the latter by Remark~\ref{rem:intended}. Since
$\iota$ is an isomorphism of the algebraic reducts, the lattice isomorphism
$R\mapsto\iota[R]$ between the complete lattices of binary relations
intertwines $\Gamma_{\mathrm{Fm}}$ with $\Gamma_{\HFM}$, and therefore
matches their least fixed points.
\end{proof}

\begin{theorem}[Identification, condensed]\label{thm:ident2}
Let $\approx_c$ be the congruence generated by $\approx$ together with the
idempotence law $ff\approx_c f$ (iteration, consequence C5 of
\cite{SB69}).
Then $\mathrm{Fm}_0/{\approx_c}$ is isomorphic to
$(\mathrm{HF};\,\emptyset,\{\cdot\},\cup)$, the algebra of hereditarily
finite sets with singleton and union; under this isomorphism the composite
operation $x\enc{y}$ is set adjunction $x\cup\{y\}$.
\end{theorem}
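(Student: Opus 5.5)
I will obtain the condensed isomorphism by composing the de-condensed one with the canonical quotient of multisets onto sets. Let $\kappa\colon\HFM\to\mathrm{HF}$ be the \emph{support collapse}, defined by rank recursion as $\kappa(m):=\{\kappa(u):u\in_1 m\}$. This map forgets multiplicities at every level.

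The first step is to check that $\kappa$ is a homomorphism: $\kappa(\emptyset)=\emptyset$, $\kappa(\enc{x})=\{\kappa(x)\}$, and $\kappa(x\U y)=\kappa(x)\cup\kappa(y)$. The last identity holds because the immediate members of $x\U y$ are exactly those of $x$ together with those of $y$. Surjectivity follows by induction on rank: a set $\{a_1,\dots,a_k\}$ with $a_i=\kappa(m_i)$ is the image of $[m_1,\dots,m_k]$.

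Composing with $\iota$ from Theorem~\ref{thm:ident1} gives a surjective homomorphism $\kappa\circ\iota\colon\mathrm{Fm}_0\to\mathrm{HF}$. It respects $\approx_c$, because the generators of $\approx$ are respected by Theorem~\ref{thm:ident1} and idempotence is respected since $a\cup a=a$. The kernel congruence contains $\approx_c$, so $\kappa\circ\iota$ factors through $\mathrm{Fm}_0/{\approx_c}$.

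The main obstacle is injectivity modulo $\approx_c$, i.e.\ showing that $\kappa(\iota(f))=\kappa(\iota(g))$ implies $f\approx_c g$. I will argue by induction on $\max(\mathrm{depth}(f),\mathrm{depth}(g))$, using Lemma~\ref{lem:constituents} to write $f\approx\enc{f_1}\cdots\enc{f_k}$ and $g\approx\enc{g_1}\cdots\enc{g_l}$. The hypothesis says that the sets $\{\kappa\iota(f_i)\}$ and $\{\kappa\iota(g_j)\}$ coincide. Hence every $f_i$ has some $g_j$ with the same image and vice versa, and the induction hypothesis gives $f_i\approx_c g_j$ for each such matched pair.

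I then reduce both sides to a duplicate-free juxtaposition. Using commutativity and associativity, I group the $\approx_c$-equivalent constituents of $f$ together. Replacing each by a chosen representative via congruence, I contract repetitions $\enc{h}\enc{h}\approx_c\enc{h}$ by idempotence applied to $\enc{h}$. This yields $f\approx_c\enc{h_1}\cdots\enc{h_r}$ with one factor per element of the common set, and likewise $g\approx_c\enc{h_1}\cdots\enc{h_r}$ with the same representatives. Therefore $f\approx_c g$. The depth bookkeeping is exactly as in the proof of Theorem~\ref{thm:ident1}, since depth is invariant under $\approx$ and every constituent has strictly smaller depth.

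Finally, the composite $x\enc{y}$ maps to $\kappa\iota(x)\cup\{\kappa\iota(y)\}$ by the homomorphism equations, and this is set adjunction.
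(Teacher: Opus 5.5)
Your proof is correct and takes essentially the paper's route: the hereditary support map (the paper's $\mathrm{supp}$, your $\kappa$) is a surjective homomorphism whose kernel contains the idempotence congruence, and the reverse inclusion is an induction that collapses duplicate constituents by idempotence. The only difference is cosmetic. The paper first passes to $\HFM$ modulo the congruence generated by $m\U m=m$ and shows that every $m$ is congruent to the canonical representative $\hat c(\mathrm{supp}(m))$, whereas you stay on forms and match constituents pairwise by a depth induction via Lemma~\ref{lem:constituents}.
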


\begin{proof}
By Theorem~\ref{thm:ident1} it suffices to show that the quotient of $\HFM$
by the congruence $\theta$ generated by $m\U m=m$ is isomorphic to
$\mathrm{HF}$. Define the \emph{hereditary support}
$\mathrm{supp}\colon\HFM\to\mathrm{HF}$ by rank recursion:
$\mathrm{supp}(m):=\{\mathrm{supp}(u):u\in_1 m\}$. Then $\mathrm{supp}$ is a
homomorphism --- $\mathrm{supp}(\emptyset)=\emptyset$,
$\mathrm{supp}(\enc{x})=\{\mathrm{supp}(x)\}$,
$\mathrm{supp}(x\U y)=\mathrm{supp}(x)\cup\mathrm{supp}(y)$ --- and it is
surjective by $\in$-induction on $\mathrm{HF}$. Since
$\mathrm{supp}(m\U m)=\mathrm{supp}(m)$, we get
$\theta\subseteq\ker(\mathrm{supp})$. For the converse, define for
$s\in\mathrm{HF}$ the canonical representative
$\hat c(s):=[\hat c(s_1),\dots,\hat c(s_j)]$, where $s_1,\dots,s_j$ are the
distinct elements of $s$. We claim every $m$ is $\theta$-congruent to
$\hat c(\mathrm{supp}(m))$; granting the claim, $\mathrm{supp}(x)=
\mathrm{supp}(y)$ implies
$x\mathrel{\theta}\hat c(\mathrm{supp}(x))=\hat c(\mathrm{supp}(y))
\mathrel{\theta}y$, so $\ker(\mathrm{supp})\subseteq\theta$. The claim
follows by rank induction: writing $m=[u_1,\dots,u_k]$, congruence with
respect to $\enc{\cdot}$ and $\U$ together with the induction hypothesis
gives $m\mathrel{\theta}
[\hat c(\mathrm{supp}(u_1)),\dots,\hat c(\mathrm{supp}(u_k))]$, and
collapsing duplicated entries by instances
$\enc{a}\U\enc{a}\mathrel{\theta}\enc{a}$ of idempotence leaves exactly one
occurrence of $\hat c(t)$ for each distinct $t\in\mathrm{supp}(m)$, i.e.\
$\hat c(\mathrm{supp}(m))$. Finally,
$\mathrm{supp}(x\U\enc{y})=\mathrm{supp}(x)\cup\{\mathrm{supp}(y)\}$ is the
adjunction identity.
\end{proof}

\subsection{The condensed theory}
Let $\Fm_{\!c}$ be the finitely axiomatized theory in the language
$\{\emptyset,\enc{\cdot},\U\}$ whose axioms are $\mathrm{F}_1$,
$\mathrm{F}_2$, $\mathrm{F}_3$, $\mathrm{F}_5$, $\mathrm{F}_6$,
$\mathrm{F}_7$ of Figure~\ref{fig:F} together with
\[
(\mathrm{Id})\ \ x\U x=x,
\qquad
(\mathrm{M})\ \ \forall x,y,u\;\bigl(u\in_1(x\U\enc{y})\ \leftrightarrow\
u\in_1 x\ \vee\ u=y\bigr),
\]
where $u\in_1 z$ abbreviates $\exists w\,(z=w\U\enc{u})$. In
$(\mathrm{HF};\emptyset,\{\cdot\},\cup)$ the formula $u\in_1 z$ defines
membership --- if $u\in z$ then $z=z\cup\{u\}$, and conversely
$z=w\cup\{u\}$ puts $u$ into $z$ --- and all axioms of $\Fm_{\!c}$ hold
there, $(\mathrm{M})$ being the defining property of adjunction.

\begin{proposition}\label{prop:condensed}
$\Fm_{\!c}\mutint\Qr$.
\end{proposition}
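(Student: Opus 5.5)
We prove the two directions separately, routing each through the first chain of Theorem~\ref{thm:chains}, which gives $\Qr\mutint\AST\mutint S^1_2$.

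\emph{$\Qr\intp\Fm_{\!c}$.} It suffices to give a direct interpretation of $\AST$ in $\Fm_{\!c}$. Translate $u\in z$ as $u\in_1 z$, with the identity domain and equality as equality.

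For the empty-set axiom, take $y:=\emptyset$. If $u\in_1\emptyset$, then $\emptyset=w\U\enc{u}$, so $\mathrm{F}_2$ and $\mathrm{F}_6$ give $\enc{u}=\emptyset$, which contradicts $\mathrm{F}_5$.

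For adjunction, given $x,y$, take $z:=x\U\enc{y}$. Axiom $(\mathrm{M})$ is then, verbatim, the translation of $\forall u\,(u\in z\leftrightarrow u\in x\vee u=y)$.

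This direction is essentially immediate; $\mathrm{Id}$ and $\mathrm{F}_7$ are not even needed. Composing with $\Qr\intp\AST$ gives $\Qr\intp\Fm_{\!c}$.

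\emph{$\Fm_{\!c}\intp\Qr$.} Here we interpret $\Fm_{\!c}$ in $S^1_2$, mimicking Section~\ref{sec:thmB} but for sets. The domain is the words $w$ that are canonical in a strict sense: $\mathrm{Can}(w)$ holds, and at every level the top-level block sequence is \emph{strictly} increasing in $\le_{\mathrm{ll}}$. The operations are interpreted as follows:
\begin{itemize}
\item $\emptyset\mapsto\varepsilon$;
\item $\enc{x}\mapsto\mathrm{Wr}(x)$;
\item $\U\mapsto$ a deduplicating merge $\mathrm{Mrg}_c$, defined like $\mathrm{Mrg}$ except that $\mathrm{Ins}_c(w,b)$ leaves $w$ unchanged when $\mathrm{Occ}(w,b)$ holds.
\end{itemize}
The analogue of Lemma~\ref{lem:book} adapts routinely. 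Closure of the domain under the operations, length bounds $|\mathrm{Mrg}_c(x,y)|\le|x|+|y|$, and a $\Sigma^b_1$-witnessed statement that the entries of $\mathrm{TL}(\mathrm{Mrg}_c(x,y))$ are exactly those of $x$ or of $y$ (a pool statement, without multiplicities) all follow by $\mathrm{PIND}$ along the bounded iteration.

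The axioms are then checked as follows.
\begin{itemize}
\item $\mathrm{F}_1$, $\mathrm{F}_2$ and $\mathrm{Id}$: use a rigidity lemma for strictly sorted sequences having the same set of entries, proved by $\mathrm{PIND}$ like Lemma~\ref{lem:rigid} (compare minimal entries and delete them).
\item $\mathrm{F}_3$, $\mathrm{F}_5$, $\mathrm{F}_7$: immediate, as before.
\item $\mathrm{F}_6$: if $\mathrm{Mrg}_c(x,y)=\varepsilon$, then $x$ has no top-level blocks, since every entry of $x$ survives in the merge; a balanced word with no top-level blocks is $\varepsilon$.
\item $(\mathrm{M})$: the translation of $u\in_1 z$ should be shown equivalent in $S^1_2$ to $\mathrm{Occ}(z,\mathrm{Wr}(u))$. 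The direction from $\in_1$ to $\mathrm{Occ}$ is the pool statement. For the converse, take the witness $w:=z$, using $\mathrm{Ins}_c(z,\mathrm{Wr}(u))=z$ when $\mathrm{Occ}$ holds. With this equivalence, $(\mathrm{M})$ reduces to the pool statement for a single insertion.
\end{itemize}
Finally, $\Fm_{\!c}\intp S^1_2\intp\Qr$.

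The main obstacle is the rigidity step for the deduplicated merge: $S^1_2$ must prove that two strictly sorted canonical words with the same entry set are equal. This requires the entry-set equivalence to be expressed by a $\Delta^b_1$ formula, so that $\mathrm{PIND}$ applies. If that becomes awkward, a fallback avoids new syntax: interpret $\AST+\mathrm{EXT}$ models inside $S^1_2$ via hereditarily finite sets coded by Ackermann numbers, checking the $\Fm_{\!c}$ axioms with bit operations, which $S^1_2$ handles via its $\Delta^b_1$ bit predicate.
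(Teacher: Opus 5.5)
Your proposal is correct and follows the paper's proof essentially step for step. For $\Qr\intp\Fm_{\!c}$ you use the same transcription of $\AST$ via $\in_1$, and for the converse the same set-canonical words with absorbing insertion $\mathrm{Mrg}_c$ inside $S^1_2$; the only (harmless) deviation is that for $\mathrm{Occ}(z,\mathrm{Wr}(u))\Rightarrow u\in_1 z$ you take the witness $w:=z$ via absorption, where the paper takes $w:=\mathrm{Rem}(z,\mathrm{Wr}(u))$ with Lemma~\ref{lem:book}(iii), which makes your version slightly simpler.
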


\begin{proof}
Given the chain $\Qr\mutint\AST$ of Theorem~\ref{thm:chains}, the first half
is a transcription. \emph{$\Qr\intp\Fm_{\!c}$}: translate the language of
$\AST$ by sending $\in$ to $\in_1$, with trivial domain and equality as
equality. The empty-set axiom $\exists y\,\forall x\,\neg(x\in y)$ holds with
$y:=\emptyset$: from $\emptyset=w\U\enc{x}$, axioms $\mathrm{F}_2$,
$\mathrm{F}_6$, $\mathrm{F}_5$ yield a contradiction, exactly as in
Fact~\ref{fact:one}, whose proof uses only these axioms. The adjunction axiom
$\forall x,y\,\exists z\,\forall u\,(u\in z\leftrightarrow u\in x\vee u=y)$
holds with $z:=x\U\enc{y}$ by $(\mathrm{M})$. Hence $\AST\intp\Fm_{\!c}$, and
composition with $\Qr\intp\AST$ gives $\Qr\intp\Fm_{\!c}$.

\emph{$\Fm_{\!c}\intp\Qr$}: adapt the interpretation of
Section~\ref{sec:thmB}. Call a word \emph{set-canonical} if it is balanced,
the sequence of top-level blocks of the word and of every block interior is
sorted with pairwise distinct entries, and all interiors are set-canonical;
replace insertion by \emph{absorbing} insertion
($\mathrm{Ins}_c(w,b):=w$ if $\mathrm{Occ}(w,b)$, else $\mathrm{Ins}(w,b)$)
and $\mathrm{Mrg}$ by the corresponding iteration $\mathrm{Mrg}_c$. The entry
pool of $\mathrm{Mrg}_c(x,y)$ is the union of the two entry pools, with a
$\Sigma^b_1$-witnessing correspondence, and Lemma~\ref{lem:rigid} applies
verbatim to sorted sequences with pairwise distinct entries; the proofs of
Lemmas~\ref{lem:blocks} and~\ref{lem:book} and of
Proposition~\ref{prop:F18} for $\mathrm{F}_1$, $\mathrm{F}_2$,
$\mathrm{F}_3$, $\mathrm{F}_5$, $\mathrm{F}_7$ then go through unchanged.
Three points differ. $(\mathrm{Id})$: in $\mathrm{Mrg}_c(x,x)$ every
insertion is absorbed, so $\mathrm{Mrg}_c(x,x)=x$. $\mathrm{F}_6$: length
additivity becomes subadditivity, but the entries of $x$ remain entries of
$\mathrm{Mrg}_c(x,y)$, so $\mathrm{Mrg}_c(x,y)=\varepsilon$ forces
$x=\varepsilon$. $(\mathrm{M})$: the translation of $u\in_1 v$ is equivalent
to $\mathrm{Occ}(v,\mathrm{Wr}(u))$ --- for the nontrivial direction take
$w:=\mathrm{Rem}(v,\mathrm{Wr}(u))$, which is set-canonical, and reinsertion
restores $v$ by Lemma~\ref{lem:book}(iii), absorption being vacuous after
removal of the unique occurrence --- and the entry pool of
$\mathrm{Mrg}_c(x,\mathrm{Wr}(y))$ is that of $x$ together with
$\mathrm{Wr}(y)$, where $\mathrm{Wr}(u)=\mathrm{Wr}(y)$ iff $u=y$. Composing
with $S^1_2\mutint\Qr$ (Theorem~\ref{thm:chains}) completes the proof.
\end{proof}

\begin{remark}
Proposition~\ref{prop:condensed} is recorded for completeness of the map:
modulo Theorem~\ref{thm:ident2}, its first half is a notational
transcription of the published equivalence $\AST\mutint\Qr$
\cite{TMR53,CH70,MM94,Nel86,Dam17}. The multiset-theoretic content of the
paper resides in Sections~\ref{sec:pairing}--\ref{sec:minimality}.
\end{remark}

\subsection{Bounded re-entry}
Chapter~11 of \cite{SB69} introduces equations of the second degree, in
which an expression re-enters its own indicational space; the tradition of
\cite{Var75,KV80} reads their solutions as oscillations in discrete time. We
record the finite-state content of this mechanism. A \emph{re-entry system
of dimension $n$} is an $n$-tuple $E_1,\dots,E_n$ of primary-algebra
expressions in the variables $x_1,\dots,x_n$, defining the synchronous
update map $F\colon\{0,1\}^n\to\{0,1\}^n$ under the two-element evaluation
of the primary algebra (juxtaposition as join, enclosure as complement)
\cite{Ban77,Meg03}; a \emph{trajectory} is a sequence
$(F^{k}(a))_{k\in\mathbb{N}}$ for an initial state $a$. Since
$\{\vee,\neg\}$ is functionally complete --- conjunction being definable by
De Morgan --- the re-entry systems of dimension $n$ are precisely the
$n$-dimensional synchronous Boolean networks.

\begin{proposition}\label{prop:reentry}
\textup{(i)} Every trajectory is ultimately periodic, with preperiod and
period jointly bounded by $2^{n}$.
\textup{(ii)} For every formula $\varphi$ of monadic second-order logic over
$(\mathbb{N},\mathrm{succ})$ with free set variables
$X_1,\dots,X_n$, it is decidable whether the trajectory of a given re-entry
system from a given initial state satisfies $\varphi$ when $X_i$ is read as
the $i$-th coordinate of the trajectory; in particular, equality of the
trajectories of two given systems, from given or from all initial states, is
decidable.
\end{proposition}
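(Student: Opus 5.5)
For (i) I would use a pigeonhole argument on the finite state space. The map $F$ acts on $\{0,1\}^n$, which has $2^n$ elements, so among the $2^n+1$ states $a,F(a),\dots,F^{2^n}(a)$ two must coincide. Take the least $p$ for which $F^p(a)=F^q(a)$ with some $p<q$, and choose $q$ minimal for that $p$. Determinism of $F$ then gives $F^{k+(q-p)}(a)=F^k(a)$ for all $k\ge p$. So the trajectory has preperiod $p$ and period $q-p$. Since $p+(q-p)=q\le 2^n$, both quantities are bounded jointly as claimed.

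For (ii) I would reduce to B\"uchi's theorem \cite{Buc62}, which says that the MSO theory of $(\mathbb{N},\mathrm{succ})$ is decidable. The first step is to compute the trajectory effectively. Evaluating each $E_i$ under the two-element interpretation gives $F$ as a finite table. Iterating until the first repeat produces explicit $p$ and $q$ as in (i), together with the finite list of states $F^0(a),\dots,F^{q-1}(a)$.

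Each coordinate set $X_i=\{k:(F^k(a))_i=1\}$ is then ultimately periodic and given explicitly. Such a set is definable by an MSO formula $\chi_i(X_i)$. The formula specifies membership of the finitely many positions below $p$ by reference to the definable numerals $0,\mathrm{succ}(0),\dots$. It also says that, from $p$ onward, membership repeats with period $q-p$; this is expressed by the clause $k\in X_i\leftrightarrow \mathrm{succ}^{q-p}(k)\in X_i$ for all $k\ge p$, where $\ge$ is MSO-definable from $\mathrm{succ}$. Since $\chi_i$ has the single satisfying set $X_i$, the question reduces to deciding the sentence $\exists X_1\dots X_n\,(\bigwedge_i\chi_i\wedge\varphi)$ by B\"uchi's procedure.

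An alternative route builds a B\"uchi automaton accepting exactly the one ultimately periodic word and checks whether it intersects the automaton for $\varphi$. Either way the argument works because the trajectory, viewed as an $\omega$-word over $\{0,1\}^n$, is ultimately periodic, and an ultimately periodic word is MSO-definable.

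Equality of trajectories of two systems from given initial states is then a special case. Form the formula $\bigwedge_i X_i=Y_i$ over the combined variables, or simply compare the two computed lasso representations directly. For equality from all initial states, I would range over the finitely many states in $\{0,1\}^n$ and decide each case. If the two systems have different dimensions, the answer is trivially negative or requires a convention on how to compare them.

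I expect the main obstacle to be the careful MSO encoding of an ultimately periodic set, in particular expressing shifts by a fixed constant and the numerals. These are standard expressions, and I would write them explicitly through iterated $\mathrm{succ}$.
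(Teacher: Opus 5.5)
Your approach is the paper's. For (i) you use pigeonhole on the $2^n$ states. For (ii) you compute the lasso by simulation, define each coordinate set by an explicit MSO formula, and apply B\"uchi's theorem. Your sentence $\exists X_1\dots X_n(\bigwedge_i\chi_i\wedge\varphi)$ is just the paper's substitution of these definitions for the free set variables.

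There is one slip that must be fixed. As you describe $\chi_i$, it fixes membership only for positions below $p$ and adds the clause $k\in X_i\leftrightarrow\mathrm{succ}^{q-p}(k)\in X_i$ for $k\ge p$. That leaves positions $p,\dots,q-1$ unconstrained, so $\chi_i$ has $2^{q-p}$ solutions, not one. Your existential sentence would then ask whether \emph{some} set with that prefix and period satisfies $\varphi$, which can give the wrong answer. For instance, with $p=0$ the empty set always qualifies. You must also fix membership at positions $p,\dots,q-1$; this is the paper's ``describing the periodic tail by residues.'' Your computed list $F^0(a),\dots,F^{q-1}(a)$ already supplies these values, and with them uniqueness holds and the reduction goes through.

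In (i), the bound $q\le 2^n$ for your choice of $(p,q)$ needs one line of justification. That pair is the first repetition, so $F^0(a),\dots,F^{q-1}(a)$ are pairwise distinct, and hence $q\le 2^n$.
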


\begin{proof}
(i) The state space has $2^{n}$ elements and $F$ is a function, so two of
the states $a$, $F(a)$, \dots, $F^{2^{n}}(a)$ coincide, and determinism closes the
trajectory into a cycle thereafter. (ii) By (i), each coordinate of the
trajectory is an ultimately periodic subset of $\mathbb{N}$, with preperiod
and period computable from the system by direct simulation; such a set is
definable over $(\mathbb{N},\mathrm{succ})$ by an explicit monadic
second-order formula, listing the finitely many exceptional positions
outright and describing the periodic tail by residues. Substituting these
definitions for the free set variables turns $\varphi$ into a sentence of
S1S, which is decidable by B\"uchi's theorem \cite{Buc62}; cf.\ also
\cite{Elg61}.
\end{proof}

\subsection{The boundary}
The results now in place locate the boundary of essential undecidability
within the calculus of indications. Below it lie: the primary algebra, whose
two-element collapse is complete and decidable \cite{Ban77,Meg03}, together
with its many-valued completions \cite{Var75}; multiplicity without nesting
--- flat multisets over a support of size $k$ form $(\mathbb{N}^{k},+)$,
decidable by \cite{Pre29} with \cite{FV59}, while over a countably infinite
support they form, via prime factorization, the multiplicative monoid of the
positive integers, whose decidability is Skolem's arithmetic
\cite{Sko30,Mos52}; ordered nesting without a containment relation --- the
first-order theories of free term algebras are decidable
\cite{Mal61,Mah88,Hod93}; and finite-state re-entry
(Proposition~\ref{prop:reentry}). Above it lie the hereditary containment
structures: hereditarily finite sets with adjunction, i.e.\ the condensed
forms (Theorem~\ref{thm:ident2}, Proposition~\ref{prop:condensed}), and
hereditarily finite multisets with containment, i.e.\ the forms themselves
(Theorem~\ref{thm:ident1}), whose schematic theory lies in the
mutual-interpretability class of $\Rr$ and whose finitely axiomatized theory
lies in that of $\Qr$ (Theorems~\ref{thm:A} and~\ref{thm:B}). The arithmetic
latent in the calculus of indications is thus, up to mutual
interpretability, exactly Robinson's $\Qr$; and it is activated neither by
iteration, nor by multiplicity, nor by feedback, but by unbounded
discriminability of form within form.

\section*{Acknowledgements}
During the preparation of this manuscript the author used Claude (Anthropic) as a support tool for literature search, language editing, LaTeX drafting, bibliographic checking, and assistance with the formal expression of mathematical arguments. The research questions, conceptual development, mathematical constructions, theorem statements, proofs, and all scientific conclusions originated from the author. The author independently evaluated all AI-generated suggestions, accepted or rejected them as appropriate, and assumes full responsibility for every aspect of the manuscript.

\end{document}